\newtheorem{proposition}{Proposition}
\newtheorem{lemma}{Lemma}
\newtheorem{definition}{Definition}
\newtheorem{theorem}{Theorem}
\newtheorem{remark}{Remark}
\newtheorem{example}{Example}
\newcommand{\T}{\mathfrak t}
\newcommand{\R}{\mathbb R}
\newcommand{\N}{\mathbb N}
\renewcommand{\T}{\mathbb{T}}
\def\({\left(}
\def\){\right)}
\def\[{\begin{eqnarray}}
\def\]{\end{eqnarray}}
\numberwithin{equation}{section}
\begin{document}

\title{Similarity Between Two Stochastic Differential Systems 
\footnotemark[1]}
\author{\  \  Xiaoying Wang \footnotemark[2], Yuecai Han \footnotemark[2], Yong Li \footnotemark[2] \footnotemark[3]}
\date{}
\maketitle
\renewcommand{\thefootnote}{\fnsymbol{footnote}}
\footnotetext[1]{\textbf{Funding:} The work of  Yong Li is partially supported by National Natural Science Foundation of China (No. 12071175, 11901056, 11571065),
Jilin Science and Technology Development Program (No. 20190201302JC, 20180101220JC).
The work of Yuecai Han is partially supported by National Natural Science Foundation of China (No. 11871244) and Jilin Science and Technology Development Program (No. 20190201302JC).}

\footnotetext[2]{School of Mathematics, Jilin University, Changchun 130012, P. R. China. \\ xiaoying21@mails.jlu.edu.cn, hanyc@jlu.edu.cn, liyong@jlu.edu.cn (Corresponding author).}
\footnotetext[3]{School of Mathematics and Statistics, Center for Mathematics and Interdisciplinary Sciences, Northeast Normal University, Changchun 130024, P. R. China.}

\renewcommand{\thefootnote}{\arabic{footnote}}

{\bf Abstract.}
The main focus of this paper is to explore how much similarity between two stochastic differential systems. Motivated by the conjugate theory of stochastic dynamic systems, we study the relationship between two systems by finding homeomorphic mappings $K$. Particularly, we use the minimizer $K^*$ to measure the degree of similarity.
Under appropriate assumptions, we give sufficient and necessary conditions for the existence of the minimizer $K^*$.
The former result can be regarded as a strong law of large numbers, while the latter is a stochastic maximum principle.
Finally, we provide different examples of stochastic systems and an application to stochastic Hartman-Grobman theorem. Thus the results illustrate what is the similarity, extending the conjugacy in stochastic dynamical systems.

{\bf Keywords.} similarity, conjugacy, strong law of large numbers, stochastic maximum principle, stochastic differential system. 
\allowdisplaybreaks
\section {Introduction}
\ \ \ \ Many physical systems in nature and society are similar in some characteristics. Although these physical systems may be composed of completely different kinds of microscopic particles or exist on completely different scales, they may follow the same laws and equations at a deeper level. A challenging problem is how to find or extract them quantitatively.

From the conjugate theory of dynamical systems, we know that two conjugate systems admit complete same qualitative behaviors.
The earliest work to explore the similarity can be traced back to the analytic linearization theorem proposed by Poincar\'{e} \cite{[P+85]} and Siegel \cite{[S+29]} around the 19th century, who used linear systems to approximate nonlinear systems. After that, the conjugate theory of dynamical systems and some other linearization theorems were established such as Sternberg ($C^k$) linearization theorem \cite{[S+57]} and Hartman-Grobman ($C^0$) linearization theorem \cite{[G+65],[H+60]}. So far, we have known some specific versions of Hartman-Grobman theorem, such as parabolic evolution equations (e.g. scalar reaction-diffusion equations \cite{[L+91]}, Cahn-Hilliard equation and similar phase-field equations \cite{[BL+94]}), the hyperbolic evolution equations (e.g. semilinear \cite{[HP+16]}, nonuniform \cite{[BV+06],[BV+09]}), control systems \cite{[BCP+07]}, and so on. For the smoothness of the conjugacy in the Hartman-Grobman theorem, for example, see \cite{[ZLL+22],[ZLZ+17],[LZZ+20],[DZZ+19],[DZZ+20],[ZZ+16],[ZZ+14],[ZZJ+14]} and the references therein.

In \cite{[WLH+23]}, we study the similarity between two ordinary differential dynamical systems. In this paper, we re-examine the stochastic dynamical systems from the perspective of similarity.
Namely, for given two systems described by stochastic differential equations respectively, what is their
similarity? How can we determine this similarity by means of suitable cost functional? We will touch
these problems.

To be more precise, let $\mathbb{T}=[0,T]$ be a fixed time duration and $\{W_t:t\in\mathbb{T}\}$
be $d$-dimensional standard Brownian motion processes. 
We consider the following 
two stochastic differential equations (SDEs): 
\begin{equation}\label{2nonlinear}
	\begin{aligned}	
	&\left\{  \begin{aligned}
		dX_t &=f(t,X_t)dt+\sigma(t,X_t) dW_t,\\
		          X_0&=x_0 ,\\ \end{aligned}  \right.\\
	&\left\{  \begin{aligned}
		dY_t &=g(t,Y_t)dt+\varsigma(t,Y_t) dW_t,\\
	             Y_0&=y_0,\\  \end{aligned}  \right.
    \end{aligned}
\end{equation}
where $f(t,X),~g(t,Y) : [0,T]\times \mathbb{R}^n\rightarrow\mathbb{R}^n,~\sigma(t,X),~\varsigma(t,Y) : [0,T]\times \mathbb{R}^n\rightarrow\mathbb{R}^{n\times d}$ are functions, $x_0,y_0\in \mathbb{R}^n$ are initial values. We denote by $X=X(t,x_0),~Y=Y(t,y_0)\in\mathcal{X}$ the solutions of the first and second equations of (\ref{2nonlinear}) respectively, where $\mathcal{X}:=\mathcal{L}^2([0,T]\times\Omega,dt\times\mathbb{P};\R^n)\cap
\mathcal{L}^2([0,T]\times\Omega,dt\times\mathbb{P};\R^{n\times d})$.

Inspired by the conjugate theory of stochastic dynamic systems, we study the similarity of two nonlinear nonautonomous stochastic differential equations (\ref{2nonlinear}). We need to find a homeomorphic (bijective and bicontinuous) mapping $K$ to map one system into another and we define three types of similarity: \\
(i) Complete similarity (Definition \ref{conjugation}):
$$\mathbb{E}||K(X(t,x_0 ))-Y(t,y_0)||_{\mathcal{X}}^2=0,~t\in[0,T].$$
(ii) Asymptotic similarity (Definition \ref{Asymptotic similarity}):
$$\underset{T\rightarrow\infty}{\lim}\mathbb{E}||K(X(T,x_0 ))-Y(T,K(x_0))||_{\mathcal{X}}=0.$$
(iii) Weak similarity (Definition \ref{Cost functional}):
$$\underset{T\rightarrow+\infty}{\lim}J[K]\triangleq
\underset{T\rightarrow\infty}{\lim}\frac{1}{T}\int_0^T\mathbb{E}||K(X(t,x))-Y(t,y)||_{\mathcal{X}}^2dt=0,$$
where $J[K]$ is a cost functional.

It can be seen that the similarity between two random dynamic systems is closely related to the minimizer  $K^*$ ($J[K^*]=\inf J[K]$). 
If we can obtain the sufficient and necessary conditions for the existence of the minimizer $K^*$, then two seemingly unrelated systems can be connected through some similarity transformation, which extends the conjugacy in stochastic dynamic systems. \\

The main results of the present paper read as follows.
For the sufficient existence of the minimizer $K^*$:\\
(Theorem \ref{existence-Ergodicity}). If the Ergodicity assumption (HE) holds, then there exists $K^*$ such that the cost functional $\underset{T\rightarrow+\infty}{\lim}J[K^*]=\text{const},~\mu-a.e.$.\\
(Theorem \ref{existence-Dissipation}). If the Dissipation assumption (HD) holds, then there exists $K^*$ such that the cost functional $\underset{T\rightarrow+\infty}{\lim}J[K^*]=0,~\mathbb{P}-a.e.$.

They can be seen as strong law of large numbers (Theorem \ref{SLLN-Ergodicity} and Theorem \ref{SLLN-Dissipation}):
$$\begin{aligned}
\underset{T\rightarrow\infty}{\lim} J[K]&\triangleq
\underset{T\rightarrow\infty}{\lim}\frac{1}{T}\int_0^T\mathbb{E}||K(X(t,x))-Y(t,y)||_{\mathcal{X}}^2dt\\
&\triangleq\underset{T\rightarrow\infty}{\lim}\frac{1}{T}\int_0^T\phi(X_t,Y_t))dt\\
&=\langle\mu,\phi\rangle=\text{const},~\mathbb{P}-a.e.,
\end{aligned}$$
where 
$\phi(X_t,Y_t)\triangleq \mathbb{E}||K(X(t,x))-Y(t,y)||_{\mathcal{X}}^2$ is the observable function.

We summarize the above sufficient existence results as Theorem \ref{existence}:
There exists $K^*\in U[0,T]$ such that $J[K^*(\cdot)]=\underset{K(\cdot)\in U[0,T]}{\inf}J[K(\cdot)]$, where $U[0,T]$ is the admissible set that will be defined in the next section.

Then we can define the similarity degree $\rho(J[K])$ to describe the similarity between two stochastic differential systems (\ref{2nonlinear}), see Definition \ref{similarity} for details. \\

For the necessary condition for the existence of the minimizer $K^*$, it can be regarded as a stochastic maximum principle (Theorem \ref{maxsde}):
Let $(X^*(\cdot),Y^*(\cdot),K^*(\cdot))$ be an optimal triple, then there is a quad of processes $(p(\cdot),q(\cdot),r(\cdot),s(\cdot))$ satisfying the first order adjoint equations and with probability 1, one has
\begin{equation*}
\langle H_K(t,X_t^*,Y_t^*,K^*,p_t,q_t,r_t,s_t),K \rangle_{\mathcal{L}^2}\geq0,
\end{equation*}
for a.e. $t\in[0,T],~\forall~K(\cdot)\in U[0,T]$, where the generalized Hamiltonian is defined by\\
$$\begin{aligned}
H&(t,X_t,Y_t,K,p_t,q_t,r_t,s_t)\triangleq\langle p_t,f_t\rangle_{\mathcal{L}^2}+\langle q_t,\sigma_t\rangle_{\mathcal{L}^2}+\langle r_t,g_t\rangle_{\mathcal{L}^2}+\langle s_t,\varsigma_t\rangle_{\mathcal{L}^2}+L(t,X_t,Y_t,K),\\
&(t,X_t,Y_t,K,p_t,q_t,r_t,s_t)\in[0,T]\times\R^n\times\R^n\times U[0,T]\times\R^n\times\R^{n\times d}\times\R^n\times\R^{n\times d},
\end{aligned}$$
and $H_K$ is the partial derivative of Hamiltonian function $H$ with respect to $K$. \\

The remainder of the paper is organized as follows.
In Section \ref{DL}, we recall some definitions and facts concerning stochastic differential equations and nonautonomous systems, and introduce some useful new concepts concerning conjugacy, cost functional, similarity degree and so on.
In Section \ref{Sufficient}, under two different assumptions, i.e., the Ergodic assumption (HE) and the Dissipative assumption (HD), we prove the sufficient existence of the minimizer $K^*$ as a strong law of large numbers.
In Section \ref{MAX}, based on the theory of stochastic optimal control, we give the necessary existence of the minimizer $K^*$, which is a stochastic maximum principle.
In the last section, we illustrate our theoretical results by some examples and apply them to a stochastic Hartman-Grobman theorem.


\section{Preliminaries}\label{DL}
\ \ \ \ In this section, we introduce some useful preliminaries, including tightness of measures, nonautonomous dynamical system, skew product flow, tempered random variable, 
similarity, conjugacy, cost functional, similarity degree and so on.

\subsection{Definitions}
\ \ \ \ Let $(\mathcal{X},\rho)$ be a complete metric space, $\mathbb{T}=[0,T]$ be a fixed time duration, $\{W_t:t\in\mathbb{T}\}$ be a $d$-dimensional standard Brown motion process, and $(\Omega,\mathcal{F},\mathbb{P})$ be a probability space throughout this paper.

We write $C(\mathbb{T},\mathcal{X})$ to represent the space of all continuous functions
$\varphi : \mathbb{T}\rightarrow \mathcal{X}$ equipped with the distance
$$d(\varphi_1,\varphi_2)=\underset{k=1}{\overset{\infty}{\sum}}\frac{1}{2^k}\frac{d_k(\varphi_1,\varphi_2)}{1+d_k(\varphi_1,\varphi_2)}
$$
when $\mathbb{T}=\R$, where $$d_k(\varphi_1,\varphi_2)=\underset{0\leq t\leq k}{\sup}\rho(\varphi_1(t),\varphi_2(t)).$$
It generates the compact (uniformly convergent on compact time intervals) open topology on $C(\mathbb{T},\mathcal{X})$. The space $C(\mathbb{T},\mathcal{X})$ is a complete metric space.

For SDEs (\ref{2nonlinear}) driven by Brownian motion in $\R^n$, it is well-known that the canonical probability space is $Pr(C(\mathbb{T},\R^n):=(C(\mathbb{T},\R^n),\mathcal{B}(C(\mathbb{T},\R^n)),\mathbb{P}_W)$, where $\mathcal{B}$ is the Borel $\sigma$-algebra, and $\mathbb{P}_W$ is the Wiener measure generated by the Brown motion $W_t$. For brevity, we still use $\mathbb{P}$ to represent $\mathbb{P}_W$.

If the noise term of an SDE is non-degenerate, then it is ergodic and there exists a ergodic invariant measure.
The Wiener shift $\theta_t:\Omega\rightarrow\Omega,~\theta_t W_s\triangleq W_{t+s}-W_t,~\forall~t,s\in\T$ is a measurable, measure-preserving and ergodic dynamical system with invariant measure $\mathbb{P}$, where $\Omega$ is the canonical sample space (Lemma \ref{ergodic dynamical system}).

In addition, we also need to know the convergence (tightness) of the measure. Let $Pr(\mathcal{X})=(\mathcal{X},\mathfrak{B},\mu)$ be a probability space, where $\mathfrak{B}$  is a Borel $\sigma$-algebra.  We write $Pr_2(\mathcal{X})$ to mean the space of probability measures $\mu \in Pr(\mathcal{X})$ such that
$$\int_\mathcal{X} ||z||^2 \mu (dz) < \infty . $$
\begin{definition}\label{Tightness of measures}
\textbf{(Tightness of measures)}
Let $\mathcal{M}\subset Pr(\mathcal{X})$ be a collection of (possibly signed or complex) measures defined on $\mathfrak{B}$. The collection $\mathcal{M}$ is called tight (or sometimes uniformly tight) if, for any $\epsilon >0$, there is a compact subset $K_{\epsilon }\subset\mathcal{X}$ such that, for all measures $\mu \in \mathcal{M}$,
$$\mu(K_\epsilon)>1-\epsilon.$$
\end{definition}
\begin{definition}\label{Precompact in the topology of weak convergence}
\textbf{(Precompact in the topology of weak convergence)}
If any sequence $\{\mu_n\}_{n\in\N}$ in the probability measure collection $\mathcal{M}$ has weakly convergent subsequences, then $\mathcal{M}$ is said to be precompact in the topology of weak convergence.
\end{definition}
\begin{remark}\label{Polish}
(i) If $\mathcal{X}$ is a metrisable compact space, then every collection of (possibly complex) measures on $\mathcal{X}$ is tight. This is not necessarily so for non-metrisable compact spaces.\\
(ii) If $\mathcal{X}$ is a Polish space (separable completely metrisable space), then every probability measure on $\mathcal{X}$ is tight. Furthermore, by Prokhorov's theorem (Lemma \ref{Prokhorov's theorem}), a collection of probability measures on $\mathcal{X}$ is tight if and only if it is precompact in the topology of weak convergence.
\end{remark}

Then we define the nonautonomous dynamical system, skew product flow and shift dynamical system.
\begin{definition}\label{nonautonomous dynamical system}
\textbf{(Nonautonomous dynamical system)} A nonautonomous dynamical system $(\theta, \varphi)$ consists of two ingredients:

(i) A flow $\theta$ on base or parameter space $P$ with time set $\T$, i.e.,\\
(1) $\theta_0(\cdot ) = Id_P,$\\
(2) $\theta_{t+s}(p) = \theta_t(\theta_s(p))$ for all $t, s \in \T$ and $p \in P,$\\
(3) the mapping $(t, p) \mapsto \theta_t(p)$ is continuous.

(ii) A cocycle $\varphi : \T \times P \times S \rightarrow S$, $S$ is the fiber or state space, satisfies the following:\\
(1) $\varphi (0, p, x) = x$ for all $(p, x) \in P \times S,$\\
(2) $\varphi (t + s, p, x) = \varphi (t, \theta_s(p), \varphi (s, p, x))$ for all $s, t \in \T$ and $(p, x) \in P \times S,$\\
(3) the mapping $(t, p, x) \mapsto \varphi (t, p, x)$ is continuous.
\end{definition}

\begin{definition}\label{skew product flow}
\textbf{(Skew product flow)}
Let $(\theta, \varphi)$ be a nonautonomous dynamical system with base space $P$ and state space $S$. The skew product semiflow $\Pi : \T \times P \times S \rightarrow P \times S$ is a semiflow of the form
$$\Pi (t,(p, x)) := (\theta_t(p), \varphi (t, p, x)).$$
\end{definition}

\begin{definition}\label{Shift dynamical system}
\textbf{(Shift dynamical system)}
If the mapping $\pi : \mathbb{T} \times \mathcal{X} \rightarrow \mathcal{X}$ is continuous, and $\pi (0, x) = x$ and $\pi (t + s, x) = \pi (t, \pi (s, x))$ for any $x \in \mathcal{X}$ and $t, s \in \mathbb{T}$,
then we call $(\mathcal{X},\mathbb{T},\pi)$ the shift dynamical system (flow) on $\mathcal{X}$.
\end{definition}

\begin{definition}\label{tempered}
\textbf{(Tempered random variable)}
(i) A random variable $R:\Omega\rightarrow(0,\infty)$ is said to be tempered with respect to a metric dynamical system $\theta$ if
$$\underset {n\rightarrow\pm\infty}{\lim}\frac{1}{n}\log R(\theta^n\omega)=0,~\mathbb{P}-a.s.$$
(ii) $R:\Omega\rightarrow[0,\infty)$ is said to be tempered from above if
$$\underset {n\rightarrow\pm\infty}{\lim}\frac{1}{n}\log^+ R(\theta^n\omega)=0,~\mathbb{P}-a.s,$$
where $\log^+(z)\triangleq\max\{\log(z),0\}$, denoting the non-negative part of the natural logarithm.\\
(iii) $R:\Omega\rightarrow(0,\infty)$ is said to be tempered from below if $\frac{1}{R}$ is tempered from above.
\end{definition}

Let $\mathcal{N}$ to denote the class of $\mathbb{P}$-null sets of $\mathcal{F}$. For each $t\in[0,T]$, $\mathcal{F}_t=\mathcal{N}\vee\sigma\{W_s-W_0:0\leq s\leq t\}$, which is a filtration.

We write $\mathcal{L}^2(\T;\R^n)$ to denote the space of all classes of $\mathcal{F}_t$-measurable stochastic process $\varphi:\T\rightarrow\R^n$ such that
$$\mathbb{E}\int_0^T|\varphi(t)|^2dt<\infty,~dt\times \mathbb{P}~a.e..$$
The space $\mathcal{L}^2(\T;\R^n)$ is a complete metric space. For a given $\varphi(t)\in\mathcal{L}^2(\T;\R^n)$, the forward It\^{o}'s integral $\int_0^{\cdot}\varphi(s)dW_s$ is also in $\mathcal{L}^2(\T;\R^n)$.

We write $\mathcal{L}^1(\T;\R^n)$ to denote the space of all classes of $\mathcal{F}_t$-measurable stochastic process $\varphi:\T\rightarrow\R^n$ such that
$$\mathbb{E}\int_0^T|\varphi(t)|dt<\infty,~dt\times \mathbb{P}~a.e..$$

Let $X(t),Y(t),~0\leq t\leq T$ be continuous $\mathcal{F}_t$-adapted processes, and $X,Y\in \mathcal{X}:=\mathcal{L}^2([0,T]\times\Omega,dt\times\mathbb{P};\R^n)\cap
\mathcal{L}^2([0,T]\times\Omega,dt\times\mathbb{P};\R^{n\times d})$.
If for $\mathbb{P}-a.s.~\omega\in\Omega$,
\begin{equation}\label{variational solution}
\begin{aligned}	
    &X(t)=X_0+\int_0^tf(s,X_s)ds+\int_0^t\sigma(s,X_s)dW_s,~0\leq t\leq T,\\
    &Y(t)=Y_0+\int_0^tg(s,Y_s)ds+\int_0^t\varsigma(s,Y_s)dW_s,~0\leq t\leq T,
\end{aligned}
\end{equation}
then we call $(X(t),Y(t))$ a pair of solutions to equation (\ref{2nonlinear}).

Let $\Gamma_{x_0}:=\{\rho\in\R^n|\rho=X(t,x_0),~t\in[0,T]\}$ represent the trajectory of $X$, and $\Gamma_{y_0}:=\{\tilde{\rho}\in\R^n|\tilde{\rho}=Y(t,y_0),~t\in[0,T]\}$ represent the trajectory of $Y$.
Let $U=U[0,T]:=\{K\in \mathcal{L}^2(\mathcal{X};\mathcal{X}):\Gamma_{x_0}\rightarrow\Gamma_{y_0},~K\text{ is homeomorphic}\}$ \footnote{\noindent
	Note that $U[0,T]$ is a convex subset of $\mathcal{L}^2(\mathcal{X};\mathcal{X})$.}
denote the admissible set.


\begin{definition}
\textbf{(Complete similarity)}

\textbf{(i) Conjugacy.}\label{conjugation}
Suppose that $X(t),~Y(t)$ satisfy system (\ref{2nonlinear}).
If there admits a homeomorphic mapping $K:\Gamma_{x_0}\rightarrow\Gamma_{y_0}$, such that
\begin{equation}
y_0=K(x_0),~\mathbb{E}||K(X(t,x_0 ))-Y(t,y_0)||_{\mathcal{X}}^2=0,~t\in[0,T],
\end{equation}
then we call $X(t,x_0 )$ and $Y(t,y_0 )$ conjugate.

\textbf{(ii) Semi-conjugacy.}\label{semi-conjugation}
Suppose that $X(t),Y(t)$ satisfy system (\ref{2nonlinear}).
If there admit two homeomorphic mappings
$K,R:\Gamma_{x_0}\rightarrow\Gamma_{y_0}$, such that
\begin{equation}
	y_0=K(x_0),~\mathbb{E}||R(X(t,x_0 ))]-Y(t,y_0)||_{\mathcal{X}}^2=0,~t\in[0,T],
\end{equation}
then we call $X(t,x_0 )$ and $Y(t,y_0 )$ semi-conjugate.

If two systems are conjugate (semi-conjugate), we call them completely similar (semi-completely similar).
\end{definition}
\begin{remark}
If $\mathbb{E}||K(X(t,x_0 ))-Y(t,y_0)||_{\mathcal{X}}^2=0$, then
$$\begin{aligned}
0&\leq Var[K(X(t,x_0 ))-Y(t,K(x_0 ))]\\
&=\mathbb{E}||K(X(t,x_0 ))- Y(t,K(x_0))||_{\mathcal{X}}^2-\{\mathbb{E}[K(X(t,x_0 ))-Y(t,K(x_0 ))]\}^2\\
&=0-\{\mathbb{E}[K(X(t,x_0 ))-Y(t,K(x_0 ))]\}^2\leq 0,
\end{aligned}$$
thus $\mathbb{E}||K(X(t,x_0 ))-Y(t,K(x_0 ))||_{\mathcal{X}}=0$.
\end{remark}

Sometimes, there is no such conjugacy or semi-conjugacy between two systems. 
So we go back to the second place and find a certain degree of conjugacy, and this is why we propose the following definitions, which are the extensions of Definition \ref{conjugation}: 
\begin{definition}\label{Asymptotic similarity}
\textbf{(Asymptotic similarity)} Suppose that $X(t)$, $Y(t)$ satisfy (\ref{2nonlinear}). Follow the definitions and notations in Definition \ref{conjugation}, 
if
$$\underset{T\rightarrow\infty}{\lim}\mathbb{E}||K(X(T,x_0 ))-Y(T,K(x_0))||_{\mathcal{X}}=0,$$
or $$\underset{T\rightarrow\infty}{\lim}\mathbb{E}||R(X(T,x_0 ))-Y(T,K(x_0))||_{\mathcal{X}}=0,$$
then we call $X(t,x_0 )$ and $Y(t,K(x_0) )$ satisfy asymptotic similarity or semi-asymptotic similarity, respectively.
\end{definition}

In the sense of time average, we define the weak similarity with a cost functional. The choice of cost functional is as follows, or it can be in other meaningful forms, such as Onsage-Machlup action functional.
\begin{definition}\label{Cost functional}
\textbf{(Weak similarity)} Suppose that $X(t)$, $Y(t)$ satisfy (\ref{2nonlinear}). Follow the definitions and notations in Definition \ref{conjugation}, 
and set
\begin{equation}\label{conjugation functional}
J[K]\triangleq\mathbb{E}\bigg[\dfrac{1}{T} \int_{0}^{T}||K(X(t,x_0))-Y(t,K(x_0))||_{\mathcal{X}}^2 dt\bigg],
\end{equation}
\begin{equation}\label{semi-conjugation functional}
J[K,R]\triangleq\mathbb{E}\bigg[\dfrac{1}{T} \int_{0}^{T}||R(X(t,x_0))-Y(t,K(x_0))||_{\mathcal{X}}^2 dt\bigg],
\end{equation}
where $K,R\in \mathcal{L}^2(\mathcal{X};\mathcal{X})$. $\forall~\omega\in\Omega,~K_\omega,R_\omega\in U[0,T]$. $J[K],J[K,R]$ are the lower semi-continuous functions, and obviously, $J[K]\geq0$, $J[K,R]\geq0$. We call them the cost functionals relative to similarity or semi-similarity respectively.

If $\underset{T\rightarrow+\infty}{\lim}J[K]=0~(\underset{T\rightarrow+\infty}{\lim}J[K,R]=0),~\mathbb{P}-a.e.$, we call $X(t,x_0 )$ and $Y(t,K(x_0) )$ satisfy weak similarity with $J[K]$ (semi-weak similarity with $J[K,R]$).
\end{definition}

If the functional $J[K]$ has a minimum and $J[K^*]$ is the minimum, then $X$ and
$Y$ can satisfy the conjugacy to a certain extent. Particularly, if $J[K^*]=0$, then $\mathbb{E}||K^*(X(t,x_0 ))- Y(t,K^*(x_0))||_{\mathcal{X}}^2=0$.
Thus, 
$X$ and $Y$ conjugate.

Analogously, if the functional $J[K,R]$ has a minimum and $J[K^*,R^*]$ is the minimum, then $x$
and $y$ can satisfy the semi-conjugacy to a certain extent. Particularly, if
$K=R$, then J$[K,R]$ is reduced to $J[K]$.

\begin{remark}\label{relationship}
From Definitions \ref{conjugation}, \ref{Asymptotic similarity}, \ref{Cost functional}, it can be concluded that
$$\text{Complete similarity}\Rightarrow\text{Asymptotic similarity}\Rightarrow\text{Weak similarity}.$$
\end{remark}

Clearly, the larger the cost functional (Definition \ref{Cost functional}), the smaller the similarity between the two dynamics,
and the ranges of $J[K]$ and $J[K,R]$ are both $[0,+\infty]$. 

When $J[K]=0~(J[K,R]=0)$, the two systems conjugate (semi-conjugate). In other words, they are completely similar (semi-similar), and the corresponding similarity degree (semi-similarity degree) should be 1. When $J[K]=+\infty~(J[K,R]=+\infty$), the two systems are completely dissimilar, and the corresponding similarity degree (semi-similarity degree) should be 0. Of course, $J[K]$ or $J[K,R]$ are in general finite as $T<\infty$.

It is worth mentioning that these are independent of the selection of the similarity degree function. Based on this, we give the concept of the similarity degree function to quantitatively describe the similarity between two systems \cite{[WLH+23]}.

\begin{definition}\label{similarity}
\textbf{(Similarity degree)} \label{similarity0}
Let the function $\rho(x)$ be continuous and monotonically decreasing in $[0,+\infty]$ such that
$\rho(0)=1$ and $\rho(+\infty)=0$. We call $\rho(x)$ a similarity degree function.

We call $\rho(J[K])$ the similarity degree of systems (\ref{2nonlinear}) with respect to $\rho$ and $\rho(J[K,R])$ the semi-similarity degree of systems (\ref{2nonlinear}) with respect to $\rho$, respectively.
In particular, when conjugating or semi-conjugating, the corresponding similarity and semi-similarity hold with $\rho(J[K^*])=1,~\rho(J[K^*,R^*])=1$.
\end{definition}

For example, $\rho(x)=\frac{\log(1+x)}{x}$. Notice that $\underset{x\rightarrow0^+}{\lim}\frac{\log(1+x)}{x}=1$ and $\underset{x\rightarrow +\infty}{\lim}\frac{\log(1+x)}{x}=0$, then
\begin{equation}
	\rho(J[K])=\left\{ \begin{aligned}	\frac{\log\Big(1+\underset{K\in\{U|K(x_0)=y_0\}}{\inf}~J[K]\Big)}{\underset{K\in\{U|K(x_0)=y_0\}}{\inf}~J[K]}, &\qquad \underset{K\in\{U|K(x_0)=y_0\}}{\inf}~J[K]\neq0,\\
		          1,\qquad\qquad &\qquad \underset{K\in\{U|K(x_0)=y_0\}}{\inf}~J[K]=0, \\ \end{aligned}  \right.
\end{equation}
\begin{equation*}
\rho(J[K,R])=\left\{ \begin{aligned}
		\frac{\log\Big(1+\underset{(K,R)\in\{U\times U|K(x_0)=y_0\}}{\inf}~J[K,R]\Big)}{\underset{(K,R)\in\{U\times U|K(x_0)=y_0\}}{\inf}~J[K,R]}, &\qquad \underset{(K,R)\in\{U\times U|K(x_0)=y_0\}}{\inf}~J[K,R]\neq0,\\
		          1,\qquad\qquad &\qquad \underset{(K,R)\in\{U\times U|K(x_0)=y_0\}}{\inf}~J[K,R]=0. \\ \end{aligned}  \right.
\end{equation*}

The following discussion is mainly for $J[K]$ in the case of a homeomorphism mapping $K$, and it is completely analogous for $J[K,R]$.

The core problem is to find the minimizer $K^*$,
which decides the similarity between two stochastic dynamical systems. If $K^*(\cdot)$ is a constant matrix, then the similarity is the linear similarity. If $K^*(\cdot)$ is orthogonal (metric preserving) or symplectic (differential structure preserving), then the similarity is called the rigid similarity.

\subsection{Settings and main results}\label{Settings and main results}
\ \ \ \ Let $\mathcal{X}:=\mathcal{L}^2([0,T]\times\Omega,dt\times\mathbb{P};\R^n)\cap
\mathcal{L}^2([0,T]\times\Omega,dt\times\mathbb{P};\R^{n\times d})$. We assume that the solution $(X,Y)$ exists and is unique for given mild conditions:\\
\textbf{(H1) (Continuity)}

For all $u,v,w\in \mathcal{X}$, $t\in[0,T],$ the mappings
$$\begin{aligned}
&\R\ni\alpha \mapsto \langle f(t,u+\alpha v),w\rangle_\mathcal{X},\\
&\R\ni\alpha \mapsto \langle g(t,u+\alpha v),w\rangle_\mathcal{X},\\
&\R\ni\alpha \mapsto \langle \sigma(t,u+\alpha v),w\rangle_\mathcal{X},\\
&\R\ni\alpha \mapsto \langle \varsigma(t,u+\alpha v),w\rangle_\mathcal{X}
\end{aligned}$$
are continuous.\\
\textbf{(H2) (Monotonicity)}

For all $u,v\in \mathcal{X}$, $t\in[0,T],$ there exists a constant $c_1$ such that
$$\begin{aligned}
&2\langle f(t,u)-f(t,v),u-v\rangle_\mathcal{X}+||\sigma(t,u)-\sigma(t,v)||^2_{\mathcal{L}^2}\leq c_1||u-v||^2_\mathcal{X},\\
&2\langle g(t,u)-g(t,v),u-v\rangle_\mathcal{X}+||\varsigma(t,u)-\varsigma(t,v)||^2_{\mathcal{L}^2}\leq c_1||u-v||^2_\mathcal{X}.
\end{aligned}$$
\textbf{(H3) (Coerciveness)}

For all $u\in \mathcal{X}$, $t\in[0,T],$ there exist a constant $c_2$ and a positive constant $M_1\in(0,+\infty)$ such that
$$\begin{aligned}
&2\langle f(t,u),u\rangle_\mathcal{X}+||\sigma(t,u)||^2_{\mathcal{L}^2}\leq c_2||u||^2_\mathcal{X}+M_1,\\
&2\langle g(t,u),u\rangle_\mathcal{X}+||\varsigma(t,u)||^2_{\mathcal{L}^2}\leq c_2||u||^2_\mathcal{X}+M_1.
\end{aligned}$$

In order to obtain the continuous dependence of the solution on initial values and coefficients, it is necessary to assume that the following condition holds.\\
\textbf{(H4) (Lipschitz condition)}

For all $u,v\in \mathcal{X}$, $t\in[0,T],$
$f(\cdot),~g(\cdot),~\sigma(\cdot),~\varsigma(\cdot)$ satisfy Lipschitz condition: There exists a positive constant $L>0$ such that
$$\begin{aligned}
&||f(t,u)-f(t,v)||_{\mathcal{L}^2}+||\sigma(t,u)-\sigma(t,v)||_{\mathcal{L}^2}\leq L|u-v|_\mathcal{X},\\
&||g(t,u)-g(t,v)||_{\mathcal{L}^2}+||\varsigma(t,u)-\varsigma(t,v)||_{\mathcal{L}^2}\leq L|u-v|_\mathcal{X}.
\end{aligned}
$$
\begin{lemma}
\textbf{(Well-posedness, \cite{[WU+17]})}
Suppose that (H1)-(H4) hold. Then the solution of equation (\ref{2nonlinear}) exists and is unique. 
\end{lemma}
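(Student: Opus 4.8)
The statement to prove is the well-posedness lemma: under (H1)--(H4), the system \eqref{2nonlinear} has a unique solution. This is a standard variational (monotone operator) existence-and-uniqueness result, attributed to a reference, so the proof should follow the classical Krylov--Rozovskii / Pardoux scheme.

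\medskip

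The plan is to treat each of the two SDEs in \eqref{2nonlinear} separately, since they are decoupled (driven by the same $W$ but with independent coefficients); it suffices to prove the claim for one equation, say the $X$-equation, and the $Y$-equation follows verbatim. First I would establish uniqueness: given two solutions $X^1, X^2$ with the same initial datum, apply the It\^o formula to $\|X^1_t - X^2_t\|^2_{\mathcal X}$ (more precisely to the $\R^n$-norm squared, pointwise in $\omega$, then take expectations), so that the stochastic integral term is a martingale with zero expectation, and the drift plus quadratic-variation terms are controlled by the monotonicity hypothesis (H2): this gives $\mathbb E\|X^1_t - X^2_t\|^2 \le c_1 \int_0^t \mathbb E\|X^1_s - X^2_s\|^2\, ds$, whence Gronwall forces $X^1 = X^2$ a.s. Continuous dependence on initial data and coefficients is proved the same way, using (H4) in place of (or in addition to) (H2) to absorb the perturbation terms.

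\medskip

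For existence I would use the Galerkin (finite-dimensional projection) approximation: choose an orthonormal basis of $\mathcal X$, project the equation onto the span of the first $m$ basis vectors to obtain a finite-dimensional SDE with coefficients that are continuous (by (H1)) and satisfy the coercivity bound (H3), solve it by the standard finite-dimensional existence theorem, and then derive a priori estimates uniform in $m$. The key a priori bound comes from applying It\^o's formula to $\|X^m_t\|^2_{\mathcal X}$ and invoking (H3): $2\langle f(t,X^m_t), X^m_t\rangle + \|\sigma(t,X^m_t)\|^2 \le c_2\|X^m_t\|^2 + M_1$, which after taking expectations and Gronwall gives $\sup_{t\le T}\mathbb E\|X^m_t\|^2 \le C(x_0, T, c_2, M_1)$, and simultaneously a bound on $\mathbb E\int_0^T \|\sigma(t,X^m_t)\|^2_{\mathcal L^2}\, dt$; a Burkholder--Davis--Gundy argument then upgrades this to $\mathbb E\sup_{t\le T}\|X^m_t\|^2 < \infty$. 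These bounds give weak compactness of $\{X^m\}$ in $\mathcal L^2([0,T]\times\Omega;\R^n)$ and of $\{f(\cdot,X^m)\}$, $\{\sigma(\cdot,X^m)\}$ in the appropriate $\mathcal L^2$ spaces; passing to a weakly convergent subsequence, one identifies the limit as a solution. The only delicate point here is the nonlinear identification --- that the weak limit of $f(\cdot, X^m)$ equals $f(\cdot, X)$ where $X$ is the weak limit of $X^m$ --- and this is exactly where the monotonicity trick (Minty's device) is needed: one tests against $f(\cdot, \Phi)$ for arbitrary $\Phi$, uses (H2) to get a one-sided inequality, and then lets a parameter degenerate.

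\medskip

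The main obstacle, and the step I would spend the most care on, is this Minty monotonicity identification argument combined with the presence of the stochastic integral: unlike the deterministic case, one must handle the It\^o correction $\|\sigma(t,X^m) - \sigma(t,\Phi)\|^2_{\mathcal L^2}$ term, which is precisely why (H2) bundles the drift-monotonicity and diffusion-Lipschitz terms together in a single inequality. Concretely I would: (i) apply It\^o to $\|X^m_t\|^2$ and to $\|X_t\|^2$ for the limit candidate, (ii) take $\liminf_{m\to\infty}$ using weak lower semicontinuity of the norm, (iii) subtract to get $\mathbb E\int_0^T [2\langle F_s - f(s,\Phi_s), X_s - \Phi_s\rangle + \|G_s - \sigma(s,\Phi_s)\|^2 - c_1\|X_s-\Phi_s\|^2]\,\mathrm{d}s \le 0$ where $F, G$ are the weak limits, (iv) substitute $\Phi = X - \lambda\Psi v$ for $\lambda > 0$ and arbitrary $v$, divide by $\lambda$, and send $\lambda\downarrow 0$, invoking the hemicontinuity (H1) to conclude $F = f(\cdot, X)$ and $G = \sigma(\cdot, X)$ $\mathrm{d}t\times\mathbb P$-a.e. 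Since this is the content of the cited reference \cite{[WU+17]}, I would present the scheme and the key estimates and refer there for the full technical details rather than reproducing every computation.
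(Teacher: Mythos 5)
Your proposal is essentially correct, but note that the paper itself offers no proof of this lemma at all: it is stated as a quoted result and attributed entirely to \cite{[WU+17]}, so there is nothing in the text to compare against line by line. What you have written is the standard Krylov--Rozovskii/Pardoux variational scheme, and it is the natural reading of the hypotheses: (H1) is hemicontinuity, (H2) is weak monotonicity bundling the drift and the It\^{o} correction, (H3) is coercivity, and your four steps (uniqueness by It\^{o} plus (H2) plus Gronwall; Galerkin approximation; uniform a priori bounds from (H3); Minty's device with (H1) to identify the nonlinear weak limits) are exactly the proof one would find in a reference for this framework. Two small remarks. First, since the statement also assumes the global Lipschitz condition (H4) and the state space is finite-dimensional ($\mathbb{R}^n$-valued SDEs), a much shorter route is available: Lipschitz continuity together with the one-sided linear growth implied by (H3) gives existence and uniqueness directly by Picard iteration, with no need for Galerkin projections or the monotonicity trick; the paper's own framing ((H1)--(H3) for well-posedness, (H4) only for continuous dependence) is what justifies your heavier machinery. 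Second, in your Minty step the test perturbation should simply read $\Phi = X - \lambda v$ for an arbitrary admissible process $v$; the stray factor in your step (iv) is presumably a typo and does not affect the argument. Overall the proposal is a faithful and correct reconstruction of the cited result.
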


To prove the (sufficient) existence of the minimizer $K^*$ of functional $J[K]$, where $K\in U[0,T]$, we first consider $KX(t,x_0)-Y(t,K(x_0))$.
According to (\ref{variational solution}), for $\mathbb{P}-a.s.~\omega\in\Omega$,
\begin{equation}\label{Psi}
KX_t-Y_t=\int_0^t[Kf(s,X_s)-g(s,Y_s)]ds+\int_0^t[K\sigma(s,X_s)-\varsigma(s,Y_s)]dW_s,~0\leq t\leq T.
\end{equation}

Then, under some appropriate assumptions (ergodicity or dissipation), we have the (sufficient) existence of the minimizer $K^*$ (Theorem \ref{existence}) in Section \ref{Sufficient}, which can be seen as a strong law of large numbers (SLLN, \cite{[LL+22]}).\\
\textbf{(HE) (Ergodicity)}

For all $x,y\in \mathcal{X}$, $t\in[0,T]$, $X(t,x)$ and $Y(t,y)$ are ergodic in probability space $(\mathcal{X},\mathfrak{B},\mu)$.\\
\textbf{(HD) (Dissipation)}

For all $X_t,Y_t\in \mathcal{X}$, $t\in[0,T],$ there exist positive constants $\alpha_1>0$ such that
$$\begin{aligned}
2\langle \frac{\partial K}{\partial X^T}f(t,X_t)-g(t,Y_t),KX_t-Y_t\rangle_{\mathcal{X}}+||\frac{\partial K}{\partial X^T}\sigma(t,X_t)-\varsigma(t,Y_t)||_{\mathcal{L}^2}^2
&\leq -\alpha_1||KX_t-Y_t||_{\mathcal{X}}^2.
\end{aligned}$$

Under the (HE) assumption, we obtain the first result (Theorem \ref{existence-Ergodicity}):

For $x,y\in\mathcal{X}$, suppose that $X(t,x),Y(t,y)$ are the solutions of equation (\ref{2nonlinear}) and (H1)-(H4) hold. If (HE) holds, then there exists $K^*$ such that $\underset{T\rightarrow+\infty}{\lim}J[K^*]=\text{const},~\mu-a.e.$.

Under the (HD) assumption, we obtain the second result (Theorem \ref{existence-Dissipation}):

Suppose that $X_t,Y_t\in\mathcal{X}$ are the 
solutions of equation (\ref{2nonlinear}) and (H1)-(H4) hold. If (HD) holds, then there exists $K^*$ such that the cost functional $\underset{T\rightarrow+\infty}{\lim}J[K^*]=0,~\mathbb{P}-a.e.$.

They can be written in the form of SLLN (Theorem \ref{SLLN-Ergodicity} and Theorem \ref{SLLN-Dissipation}, respectively):
$$\begin{aligned}
\underset{T\rightarrow\infty}{\lim} J[K^*]&\triangleq
\underset{T\rightarrow\infty}{\lim}\frac{1}{T}\int_0^T\mathbb{E}||K^*(X(t,x))-Y(t,y)||_{\mathcal{X}}^2dt\\
&\triangleq\underset{T\rightarrow\infty}{\lim}\frac{1}{T}\int_0^T\phi(X_t,Y_t))dt\\
&=\langle\mu,\phi\rangle=\text{const},~\mathbb{P}-a.e.,
\end{aligned}$$
where $\phi(X_t,Y_t)\triangleq \mathbb{E}||K^*(X(t,x))-Y(t,y)||_{\mathcal{X}}^2$ is the observable function.\\


Our third result is the necessary condition for existence of the minimizer $K^*(t)$, which is a stochastic maximum principle (Theorem \ref{maxsde}):

Let $(X^*(\cdot),Y^*(\cdot),K^*(\cdot))$ be an optimal triple of the control problem. Then there is a quad of processes $(p(\cdot),q(\cdot),r(\cdot),s(\cdot))$ satisfying the first order adjoint equations and with probability 1, one has
\begin{equation}\label{geq0}
\langle H_K(t,X_t^*,Y_t^*,K^*,p_t,q_t,r_t,s_t),K \rangle_{\mathcal{L}^2}\geq0,
\end{equation}
for a.e. $t\in[0,T],~\forall~K(\cdot)\in U[0,T]$, where\\
$$\begin{aligned}
H&(t,X_t,Y_t,K,p_t,q_t,r_t,s_t)\triangleq\langle p_t,f_t\rangle_{\mathcal{L}^2}+\langle q_t,\sigma_t\rangle_{\mathcal{L}^2}+\langle r_t,g_t\rangle_{\mathcal{L}^2}+\langle s_t,\varsigma_t\rangle_{\mathcal{L}^2}+L(t,X_t,Y_t,K),\\
&(t,X_t,Y_t,K,p_t,q_t,r_t,s_t)\in[0,T]\times\R^n\times\R^n\times U[0,T]\times\R^n\times\R^{n\times d}\times\R^n\times\R^{n\times d}.
\end{aligned}$$

During the proof process, we need to further assume that the following conditions hold:\\
\textbf{(H1') (Differentiability)}

For all $u,v,w\in \mathcal{X}$, $t\in[0,T],$ $f(t,u),~g(t,u),~\sigma(t,u),~\varsigma(t,u)$ are continuously differentiable with respect to $u$, i.e., the mappings
$$\begin{aligned}
&\R\ni\alpha \mapsto \langle f_X(t,u+\alpha v),w\rangle_\mathcal{X},\\
&\R\ni\alpha \mapsto \langle g_Y(t,u+\alpha v),w\rangle_\mathcal{X},\\
&\R\ni\alpha \mapsto \langle \sigma_X(t,u+\alpha v),w\rangle_\mathcal{X},\\
&\R\ni\alpha \mapsto \langle \varsigma_Y(t,u+\alpha v),w\rangle_\mathcal{X}
\end{aligned}$$
are continuous.\\
\textbf{(H3') (Boundness of derivative)}

For all $u\in \mathcal{X}$, $t\in[0,T],$ there exist a positive constant $M_2\in(0,+\infty)$ such that
$$\begin{aligned}
&2\langle f_X(t,u),u\rangle_\mathcal{X}+||\sigma_X(t,u)||^2_{\mathcal{L}^2}\leq M_2,\\
&2\langle g_Y(t,u),u\rangle_\mathcal{X}+||\varsigma_Y(t,u)||^2_{\mathcal{L}^2}\leq M_2.
\end{aligned}$$


In Section \ref{Applications}, we provide three different examples and summarize the corresponding similarity results here:\\
(i) A steady linear system and its output system are completely similar (conjugate); \\
(ii) Two steady linear systems can satisfy complete similarity, asymptotic similarity or that the similarity degree is 1 as $T\rightarrow\infty$, under conditions from strong to weak; \\
(iii) A nonlinear system and its linearization system are completely similar (conjugate) near the fixed point (Lemma \ref{stochastic Hartman-Grobman theorem} and Theorem \ref{stochastic Hartman-Grobman theorem1}).


\section{The (sufficient) existence of $K^*$}\label{Sufficient}
\ \ \ \ We want to know under what conditions the functional $J[K]$ reaches the minimum, that is, find the minimizer $K^*$ to make the functional reach the minimum.
As mentioned in Section \ref{Settings and main results}, in this section, we will obtain the (sufficient) existence of the minimizer $K^*$ under either the ergodic or dissipative assumptions, which can be regarded as an SLLN.

\subsection{The SLLN under Ergodicity assumption}
\ \ \ \ Denote by $\mathbb{F}:=(f,\sigma,g,\varsigma)$. We write $F^\tau(t,x)=F(t +\tau,x),F\in\{f,\sigma,g,\varsigma\}$ for all $(t, x) \in \T \times \mathcal{X}$, the hull $\mathcal{H}(\mathbb{F})$ is the closure of $\{\mathbb{F}^\tau=(f^\tau,\sigma^\tau,g^\tau,\varsigma^\tau),\tau\in\T\}$.
$(\mathcal{H}(\mathbb{F}),\T, \theta)$ is a shift dynamical system, where $\theta: \T \times \mathcal{H}(\mathbb{F}) \rightarrow \mathcal{H}(\mathbb{F}),(\tau ,\mathbb{F} ) \mapsto \mathbb{F}^\tau$.

Define $P_\mathbb{F}(t,dy):=\mathbb{P} \circ (\theta(t))^{-1}(dy)$. Then we can associate a mapping $P^*(t,\mathbb{F}, \cdot):Pr(\mathcal{X}) \rightarrow Pr(\mathcal{X})$ defined by
$$P^*(t,\mathbb{F}, \mu )(B):=\int_\mathcal{X} P_\mathbb{F} (t, B)\mu (dx)$$
for all $\mu \in Pr(\mathcal{X})$ and $B \in \mathcal{B}(\mathcal{X})$.
Then $P^*$ is a cocycle on $(\mathcal{H}(\mathbb{F}),\T, \theta)$ with fiber $Pr_2(\mathcal{X})$.
\begin{proposition}
(\cite{[CL+23]})
The mapping given by
$$\Pi : \T\times \mathcal{H}(\mathbb{F}) \times Pr_2(\mathcal{X}) \rightarrow \mathcal{H}(\mathbb{F}) \times Pr_2(\mathcal{X}),$$
$$\Pi (t,(\tilde{\mathbb{F}}, \mu )) := \Bigl(\theta_t(\tilde{\mathbb{F}}), P^*(t,\tilde{\mathbb{F}}, \mu ))$$
is a continuous skew product flow, where $\tilde{\mathbb{F}}\in\mathcal{H}(\mathbb{F})$ and $\mu\in Pr_2(\mathcal{X})$.
\end{proposition}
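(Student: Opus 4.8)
The plan is to verify the cocycle property for $P^*$ on the base system $(\mathcal{H}(\mathbb{F}),\T,\theta)$ and then assemble the skew product $\Pi$, checking continuity at the end. First I would confirm that $\theta$ is a genuine flow on the hull: this is essentially the Bebutov construction, where $\theta_0 = Id$ and $\theta_{t+s} = \theta_t\circ\theta_s$ follow from $F^{\tau+\tau'}(r,x) = F(r+\tau+\tau',x)$, and continuity of $(t,\tilde{\mathbb{F}})\mapsto\theta_t(\tilde{\mathbb{F}})$ follows from the definition of the compact-open topology on $\mathcal{H}(\mathbb{F})\subset C(\T\times\mathcal{X},\,(\R^n)^2\times(\R^{n\times d})^2)$ together with the uniform continuity of the translates on compact time windows (using the standing assumptions (H1)--(H4), which give the local equicontinuity needed for the hull to be well-behaved). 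These are standard facts about shift dynamical systems (Definition \ref{Shift dynamical system}), so I would cite them rather than belabor them.

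Next I would establish the cocycle identity $P^*(t+s,\tilde{\mathbb{F}},\mu) = P^*(t,\theta_s(\tilde{\mathbb{F}}), P^*(s,\tilde{\mathbb{F}},\mu))$ together with $P^*(0,\tilde{\mathbb{F}},\mu)=\mu$. The identity at $t=0$ is immediate since $P_{\tilde{\mathbb{F}}}(0,\cdot)$ is the identity kernel. For the composition, I would unwind the definition $P^*(t,\tilde{\mathbb{F}},\mu)(B) = \int_{\mathcal{X}} P_{\tilde{\mathbb{F}}}(t,B)\,\mu(dx)$ and use the Chapman--Kolmogorov relation for the transition kernels $P_{\tilde{\mathbb{F}}}(t,\cdot)$ associated to the SDE flow on the hull, namely $P_{\tilde{\mathbb{F}}}(t+s,x,B) = \int_{\mathcal{X}} P_{\theta_s(\tilde{\mathbb{F}})}(t,z,B)\,P_{\tilde{\mathbb{F}}}(s,x,dz)$, which holds by the Markov property of the solution together with the time-shift structure ($\theta_s$ accounts for the nonautonomous drift/diffusion being advanced by $s$). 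Integrating against $\mu$ and applying Fubini (justified by the $Pr_2$ bound together with the moment estimate coming from coerciveness (H3)) yields the cocycle law. I should also check the invariance $P^*(t,\tilde{\mathbb{F}},\cdot):Pr_2(\mathcal{X})\to Pr_2(\mathcal{X})$: this is where (H3) enters, giving via Gronwall a uniform-in-$t$ second-moment bound $\int\|z\|^2 P^*(t,\tilde{\mathbb{F}},\mu)(dz) \le C(1+\int\|z\|^2\mu(dz))$ on each compact time interval, so the pushforward stays in $Pr_2$.

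Finally I would verify joint continuity of $\Pi$. Continuity in the first coordinate is just continuity of $\theta$. For the second coordinate I would fix a bounded Lipschitz test function $\varphi$ on $\mathcal{X}$ and show $(t,\tilde{\mathbb{F}},\mu)\mapsto \int\varphi\,dP^*(t,\tilde{\mathbb{F}},\mu)$ is continuous: continuity in $\mu$ is linearity plus the definition of weak convergence, continuity in $t$ uses the sample-path continuity of the SDE solution plus dominated convergence (with the $Pr_2$ moment bound as dominating control), and continuity in $\tilde{\mathbb{F}}$ uses the continuous dependence of SDE solutions on coefficients in the compact-open topology, which is exactly what (H4), the Lipschitz condition, buys us via the standard stability estimate $\mathbb{E}\sup_{t\le S}|X^{\tilde{\mathbb{F}}}_t - X^{\tilde{\mathbb{F}}'}_t|^2 \le C\,\|\tilde{\mathbb{F}}-\tilde{\mathbb{F}}'\|_S^2$. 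I expect the main obstacle to be precisely this last point — proving joint rather than separate continuity, and in particular handling the topology on the infinite-dimensional fiber $Pr_2(\mathcal{X})$ (weak convergence augmented with convergence of second moments, i.e. the $2$-Wasserstein topology). The tightness machinery of Definition \ref{Tightness of measures} and Prokhorov's theorem, together with the uniform moment bounds from (H3), are the tools that make this work: tightness of the relevant families of measures upgrades the separate continuities to joint continuity on compact sets. Once these pieces are in place, $\Pi(t,(\tilde{\mathbb{F}},\mu)) = (\theta_t(\tilde{\mathbb{F}}), P^*(t,\tilde{\mathbb{F}},\mu))$ satisfies the semiflow axioms of Definition \ref{skew product flow}, completing the proof.
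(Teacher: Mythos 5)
The paper does not actually prove this proposition: it is imported verbatim from the cited reference [CL+23], so there is no in-paper argument to compare yours against. Judged on its own, your sketch follows the standard route (Bebutov shift on the hull, Chapman--Kolmogorov for the transition kernels to get the cocycle law, moment bounds from coerciveness to keep the fiber inside $Pr_2(\mathcal{X})$, and continuous dependence on coefficients for continuity), which is essentially how this is done in the literature, so the overall architecture is sound.

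Two points deserve attention. First, you silently repaired the paper's definition: as written, $P_{\mathbb{F}}(t,dy):=\mathbb{P}\circ(\theta(t))^{-1}(dy)$ and $P^*(t,\mathbb{F},\mu)(B):=\int_{\mathcal{X}}P_{\mathbb{F}}(t,B)\,\mu(dx)$ have an integrand with no $x$-dependence, so the integral is vacuous; your argument only works for the corrected Markov kernel $P_{\tilde{\mathbb{F}}}(t,x,B)$, and you should say explicitly that this is the reading you adopt. Second, the Chapman--Kolmogorov identity over the hull is not purely a statement about the drift and diffusion being advanced by $s$: restarting the SDE at time $s$ also restarts the driving noise, and the identity at the level of laws requires the shift-invariance of the Wiener measure under $\theta_s$ (Lemma \ref{ergodic dynamical system} in the appendix). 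You gesture at ``the time-shift structure'' but this invariance is the actual load-bearing fact and should be invoked. Finally, the claim that tightness plus Prokhorov ``upgrades separate continuity to joint continuity'' is too quick as stated; what one really needs is a quantitative modulus, e.g.\ the Gronwall estimate $\mathbb{E}\sup_{t\le S}|X^{\tilde{\mathbb{F}}}_t-X^{\tilde{\mathbb{F}}'}_t|^2\le C\|\tilde{\mathbb{F}}-\tilde{\mathbb{F}}'\|_S^2$ combined with a uniform-in-compact-time second-moment bound, which together control the $2$-Wasserstein distance between the pushforwards uniformly over bounded sets of initial measures; once that uniform estimate is in hand, joint continuity follows directly and the tightness machinery is not needed. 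These are repairs of emphasis rather than of substance; the skeleton of the proof is correct.
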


For ergodic measures, we have the following Birkhoff ergodic theorem:
\begin{lemma}\label{Birkhoff ergodic theorem}
\textbf{(Birkhoff ergodic theorem)}
Let $(\mathcal{X},\mathfrak{B},\mu)$ be a probability space and $T:\mathcal{X}\rightarrow\mathcal{X}$ be an ergodic transformation. Then for any $\phi\in \mathcal{L}^1(\mu)$,
$$\underset{m\rightarrow\infty}{\lim}\frac{1}{m}\underset{i=0}{\overset{m-1}{\sum}}\phi(T^i x)=\int\phi d\mu,~\mu-a.e.,~x\in\mathcal{X}.$$
\end{lemma}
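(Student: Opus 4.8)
The plan is to reduce the statement to the classical pointwise (Birkhoff) ergodic theorem for measure-preserving transformations and then exploit ergodicity to identify the limit. Throughout I assume, as is implicit in the phrase ``ergodic transformation'', that $T$ is measure-preserving, i.e. $\mu(T^{-1}A)=\mu(A)$ for $A\in\mathfrak{B}$, and that ergodicity means every $T$-invariant set has measure $0$ or $1$ (equivalently, every $T$-invariant measurable function is $\mu$-a.e. constant). Write $A_m\phi(x):=\frac1m\sum_{i=0}^{m-1}\phi(T^ix)$ for the Birkhoff averages. The argument then splits into two parts: (a) show that $\phi^*(x):=\lim_{m\to\infty}A_m\phi(x)$ exists for $\mu$-a.e. $x$, is $T$-invariant, lies in $\mathcal{L}^1(\mu)$, and satisfies $\int\phi^*\,d\mu=\int\phi\,d\mu$; (b) invoke ergodicity to conclude $\phi^*\equiv\int\phi\,d\mu$ $\mu$-a.e.

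For (a), the cornerstone is the Maximal Ergodic Theorem: for $\psi\in\mathcal{L}^1(\mu)$, with $S_n:=\sum_{i=0}^{n-1}\psi\circ T^i$ and $M_N:=\max_{1\le n\le N}S_n$, one has $\int_{\{M_N>0\}}\psi\,d\mu\ge 0$. I would prove this by Garsia's short computation: from $M_N\circ T\ge S_n\circ T=S_{n+1}-\psi$ one gets $\psi\ge S_n-M_N\circ T$ for $1\le n\le N$, hence $\psi\ge M_N-M_N\circ T$ on $\{M_N>0\}$, and integrating together with measure-preservation ($\int M_N\circ T\,d\mu=\int M_N\,d\mu$) and $M_N\ge 0$ yields the inequality. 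Applying this to $\psi=\phi-\alpha$ (and to $\beta-\phi$) restricted to $T$-invariant sets, one gets that on any invariant set $B$, the set $E_\alpha:=\{\sup_m A_m\phi>\alpha\}$ satisfies $\int_{E_\alpha\cap B}(\phi-\alpha)\,d\mu\ge 0$. Setting $\overline\phi:=\limsup_m A_m\phi$, $\underline\phi:=\liminf_m A_m\phi$ (both $T$-invariant), for rationals $\alpha>\beta$ the invariant set $E_{\alpha,\beta}:=\{\underline\phi<\beta\}\cap\{\overline\phi>\alpha\}$ obeys $\alpha\,\mu(E_{\alpha,\beta})\le\int_{E_{\alpha,\beta}}\phi\,d\mu\le\beta\,\mu(E_{\alpha,\beta})$, which forces $\mu(E_{\alpha,\beta})=0$; a union over rational pairs gives $\overline\phi=\underline\phi$ a.e., so $\phi^*$ exists a.e. Finally $\int\phi^*\,d\mu=\int\phi\,d\mu$: this is immediate for bounded $\phi$ via dominated convergence plus $\int A_m\phi\,d\mu=\int\phi\,d\mu$, and passes to general $\phi\in\mathcal{L}^1(\mu)$ by truncation, the error being controlled once more by the maximal inequality (the standard uniform-integrability step for $\{A_m\phi\}$). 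For (b), since $\phi^*$ is $T$-invariant and $T$ is ergodic, $\phi^*$ is $\mu$-a.e. equal to a constant, and that constant is $\int\phi^*\,d\mu=\int\phi\,d\mu$, which is precisely the claimed identity.

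The step I expect to be the main obstacle is the $\mathcal{L}^1$ part of (a), namely establishing $\int\phi^*\,d\mu=\int\phi\,d\mu$ (equivalently, uniform integrability of the Birkhoff averages): pointwise convergence alone does not pin down the value of the limit, and this is the only place a genuine approximation argument is needed. Garsia's maximal lemma, the invariance bookkeeping for $E_{\alpha,\beta}$, and the final reduction via ergodicity are all essentially formal once the maximal inequality is available.
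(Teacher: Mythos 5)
The paper does not prove this lemma at all: it is quoted as the classical Birkhoff pointwise ergodic theorem and used as a black box (in the proof of Theorem \ref{existence-Ergodicity}), so there is no in-paper argument to compare against. Your proposal is the standard textbook proof via the maximal ergodic theorem with Garsia's trick, and it is essentially correct and complete in outline: maximal inequality, the $E_{\alpha,\beta}$ bookkeeping over rational pairs to get a.e. existence of the limit, identification of $\int\phi^*\,d\mu=\int\phi\,d\mu$ by truncation/uniform integrability, and finally ergodicity to make $\phi^*$ constant. One small technical slip: with your definition $M_N:=\max_{1\le n\le N}S_n$ the assertion ``$M_N\ge 0$'' used in the integration step is false in general (take $\psi<0$ everywhere); the standard fix is to work with $M_N^+=\max(M_N,0)$, equivalently to include $n=0$ with $S_0=0$ in the maximum, noting that $\{M_N>0\}=\{M_N^+>0\}$ and that $M_N^+-M_N^+\circ T\le 0$ off this set, so that $\int_{\{M_N>0\}}(M_N^+-M_N^+\circ T)\,d\mu\ge\int(M_N^+-M_N^+\circ T)\,d\mu=0$ by measure preservation. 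With that correction the argument is the standard, correct proof. You are also right to make explicit the hypothesis that $T$ is $\mu$-preserving: the paper's statement of the lemma omits it, but it is needed both for the maximal inequality and for $\int A_m\phi\,d\mu=\int\phi\,d\mu$.
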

Recall the Ergodicity assumption (HE):\\
\textbf{(HE) (Ergodicity)}

For all $x,y\in \mathcal{X}$, $t\in[0,T]$, $X(t,x)$ and $Y(t,y)$ are ergodic in probability space $(\mathcal{X},\mathfrak{B},\mu)$.\\

Based on the Ergodicity assumption (HE) and the Birkhoff ergodic theorem (Lemma \ref{Birkhoff ergodic theorem}), we can obtain the following theorem.
\begin{theorem}\label{existence-Ergodicity}
For $x,y\in\mathcal{X}$, suppose that $X(t,x),Y(t,y)$ are the solutions of equation (\ref{2nonlinear}) and (H1)-(H4) hold. If (HE) holds, then there exists $K^*$ such that 
$\underset{T\rightarrow+\infty}{\lim}J[K^*]=\text{const},~\mu-a.e.$.
\end{theorem}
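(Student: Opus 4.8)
The plan is to reduce the claimed convergence of the Cesàro-type average defining $J[K^*]$ to a direct application of the Birkhoff ergodic theorem (Lemma \ref{Birkhoff ergodic theorem}), after first producing a candidate minimizer $K^*$ from the structure of the admissible set $U[0,T]$. First I would observe that, for each fixed $K \in U[0,T]$, the integrand $\phi_K(X_t,Y_t) \triangleq \mathbb{E}\|K(X(t,x))-Y(t,y)\|_{\mathcal{X}}^2$ is, under (H1)--(H4), a measurable and (by the continuity/Lipschitz hypotheses together with the coercivity estimate (H3)) integrable function on the probability space $(\mathcal{X},\mathfrak{B},\mu)$; that is, $\phi_K \in \mathcal{L}^1(\mu)$. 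The Lipschitz bound (H4) and coercivity (H3) give, via Gronwall on \eqref{Psi}, the a priori bound $\mathbb{E}\|X_t\|_{\mathcal X}^2 + \mathbb{E}\|Y_t\|_{\mathcal X}^2 \le C(1+\|x\|^2+\|y\|^2)$ uniformly on $[0,T]$, which together with $K \in \mathcal{L}^2(\mathcal{X};\mathcal{X})$ yields $\phi_K \in \mathcal{L}^1$.

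Next I would invoke (HE): since $X(t,x)$ and $Y(t,y)$ are ergodic on $(\mathcal{X},\mathfrak{B},\mu)$, the joint process (equivalently, the shift $T$ associated to the time-translation flow $\theta$ on the hull $\mathcal{H}(\mathbb{F})$, whose skew-product structure was recorded in the Proposition preceding the theorem) is an ergodic transformation, so Lemma \ref{Birkhoff ergodic theorem} applies to $\phi_K$ and gives
\[
\lim_{T\to\infty}\frac{1}{T}\int_0^T \phi_K(X_t,Y_t)\,dt \;=\; \langle \mu,\phi_K\rangle \;=\;\text{const}, \qquad \mu\text{-a.e.}
\]
(the continuous-time Birkhoff theorem follows from the discrete one along integer times plus the uniform bound above to control the fractional part). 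Thus $\lim_{T\to\infty} J[K] = \langle\mu,\phi_K\rangle$ exists for \emph{every} $K\in U[0,T]$. Then I would set $I(K) := \langle\mu,\phi_K\rangle \ge 0$ and take $K^*$ to be a minimizer of $I$ over $U[0,T]$: since $U[0,T]$ is a convex subset of $\mathcal{L}^2(\mathcal{X};\mathcal{X})$ (as noted in the footnote) and $K\mapsto \phi_K$ is, by (H4), continuous and the squared-norm structure makes $I$ lower semicontinuous and coercive in the appropriate sense, the infimum is attained; this $K^*$ then satisfies $\lim_{T\to\infty}J[K^*] = I(K^*) = \text{const}$, $\mu$-a.e., which is exactly the assertion.

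The main obstacle I anticipate is the existence of the minimizer $K^*$ within the \emph{homeomorphism} constraint in $U[0,T]$: while $U[0,T]$ is convex in $\mathcal{L}^2(\mathcal{X};\mathcal{X})$, the homeomorphism (bijective, bicontinuous, $\Gamma_{x_0}\to\Gamma_{y_0}$) requirement is not obviously closed under the weak $\mathcal{L}^2$-limits one would use to extract a minimizing sequence's limit, so one must either argue that the relevant sublevel sets of $I$ are compact in a topology compatible with this constraint, or be content with an infimizing argument and invoke lower semicontinuity of $J$ (which Definition \ref{Cost functional} already asserts). A secondary, more routine point is justifying the passage from the discrete Birkhoff average to the continuous-time average $\frac1T\int_0^T$, and checking that the exceptional $\mu$-null set can be chosen uniformly — here one uses separability of $\mathcal{X}$ (a Polish space, cf. Remark \ref{Polish}) to take a countable dense family of $K$'s and a standard continuity argument to pass to all of $U[0,T]$.
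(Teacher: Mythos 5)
Your proposal is correct and follows essentially the same route as the paper: verify $\phi\in\mathcal{L}^1(\mu)$, use (HE) and the skew-product structure to recast $\frac{1}{T}\int_0^T\phi\,dt$ as a Birkhoff average, and apply Lemma \ref{Birkhoff ergodic theorem} to get a $\mu$-a.e.\ constant limit. The only difference is that you additionally try to select $K^*$ as a minimizer of $\langle\mu,\phi_K\rangle$ — the paper does not do this here (it defers attainment of the infimum to Theorem \ref{existence}), and it is not needed for this statement, since your own observation that the limit is constant for \emph{every} $K\in U[0,T]$ already yields the claimed existence; your extra care with the $\mathcal{L}^1$ bound and the discrete-to-continuous passage addresses points the paper glosses over.
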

\begin{proof} We divide the proof into 3 steps.\\

\textbf{Step 1.} Verify the observable function $\phi(X_t,Y_t)\in\mathcal{L}^1(\mu)$.

For $x,y\in\mathcal{X}$, define the observable function $\phi(X_t,Y_t)\triangleq \mathbb{E}||K(X(t,x))-Y(t,y)||_{\mathcal{X}}^2$. 
According to the definitions and assumptions in Section \ref{DL}, $X_t,Y_t\in\mathcal{X}:=\mathcal{L}^2([0,T]\times\Omega,dt\times\mathbb{P};\R^n)\cap
\mathcal{L}^2([0,T]\times\Omega,dt\times\mathbb{P};\R^{n\times d})$, $K\in U[0,T]:=\{K\in \mathcal{L}^2(\mathcal{X};\mathcal{X}):\Gamma_{x_0}\rightarrow\Gamma_{y_0},~K\text{ is homeomorphic}\}$. Therefore, $\phi(X_t,Y_t)\in\mathcal{L}^1(\mu)$.\\

\textbf{Step 2.} Skew product flow properties.

Due to $K$ being a homeomorphism, under the Ergodicity assumption, $KX_t-Y_t$ is a ergodic skew product flow $\Pi (t,(\tilde{\mathbb{F}}, \mu ))$ with the ergodic, measure-preserving transformation $\Psi$ in $Pr(\mathcal{X})$.

For each fixed $T>0$, divide $[0, T]$ into $m$ segments, each segment is $h_m=\frac{T}{m}$ long. Let $\tau_i=ih_m,~i=0,1,\cdots,m$. For $x,y\in\mathcal{X}$, let
\begin{equation}\begin{aligned}
\xi_0&=KX_0-Y_0\triangleq\Psi^0(x,y),\\
\xi_i&=KX_{\tau_i}-Y_{\tau_i}\triangleq\Psi^i(x,y),i=1,2,\cdots,m.
\end{aligned}
\end{equation}
Thereupon, we get $(m+1)$ discrete points: $$(\tau_0,\xi_0),(\tau_1,\xi_1),\cdots,(\tau_m,\xi_m).$$
From the cost functional (\ref{conjugation functional}) and Fubini theorem, we have
\begin{equation}\label{tau_i}
\begin{aligned}
\underset{T\rightarrow+\infty}{\lim}J[K^*]&=\underset{T\rightarrow+\infty}{\lim}
\mathbb{E}\bigg[\dfrac{1}{T} \int_{0}^{T}||K^*(X(t,x))-Y(t,y)||_{\mathcal{X}}^2 dt\bigg]\\
&=\underset{T\rightarrow+\infty}{\lim}\frac{1}{T}\int_0^T\phi(X_t,Y_t)dt\\
&=\underset{T\rightarrow+\infty}{\lim}\frac{1}{T}\underset{i=0}{\overset{m-1}{\sum}}\int_{\tau_i}^{\tau_{i+1}}\phi(X_t,Y_t)dt\\
&=\underset{T\rightarrow+\infty}{\lim}\frac{1}{T}\underset{i=0}{\overset{m-1}{\sum}}\frac{T}{m}\phi(\xi_i)\\
&=\underset{m\rightarrow\infty}{\lim}\frac{1}{m}\underset{i=0}{\overset{m-1}{\sum}}\phi(\Psi^i(x,y)).
\end{aligned}
\end{equation}

\textbf{Step 3.} Birkhoff ergodic theorem.

As can be seen from the previous text, $Pr(\mathcal{X})=(\mathcal{X},\mathfrak{B},\mu)$ is a probability space. 
According to Birkhoff ergodic theorem (Lemma \ref{Birkhoff ergodic theorem}),
\begin{equation}\label{Birkhoff}
\underset{m\rightarrow\infty}{\lim}\frac{1}{m}\underset{i=0}{\overset{m-1}{\sum}}\phi(\Psi^i (x,y))=\int\phi d\mu,~\mu-a.e..
\end{equation}
Substituting the above equation into (\ref{tau_i}), we obtain
$$\begin{aligned}
\underset{T\rightarrow+\infty}{\lim}J[K^*]&=\underset{m\rightarrow\infty}{\lim}\frac{1}{m}\underset{i=0}{\overset{m-1}{\sum}}\phi(\Psi^i (x,y))\\
&=\int\phi(x,y) d\mu\\
&=\langle\mu,\phi\rangle\\
&=\iint_{\mathcal{X}\times\mathcal{X}}|K^*x-y|^2dxdy\\
&=\text{const},~\mu-a.e..
\end{aligned}$$
\end{proof}

From another perspective, we can write Theorem \ref{existence-Ergodicity} in the form of SLLN (Theorem \ref{SLLN-Ergodicity}). To prove Theorem \ref{SLLN-Ergodicity}, we need the following Skorokhod's representation theorem.
\begin{lemma}\label{Skorokhod's representation theorem}
\textbf{(Skorokhod's representation theorem)}
Let $\{\mu_n\}_{n\in\N}$ be a sequence of probability measures on a metric space $\mathcal{X}$ such that $\mu_n$ converges weakly to some probability measure $\mu$ on $\mathcal{X}$ as $n\rightarrow\infty$. Suppose also that the support of $\mu$ is separable. Then there exist $\mathcal{X}$-valued random variables $\phi_n,\phi$ defined on a common probability space $(\Omega ,{\mathcal {F}},\mathbb{P})$ such that the law of $\phi_n$ is $\mu_n$ for all $n$, the law of $\phi$ is $\mu$, and $\phi_n\overset{n\rightarrow\infty}{\longrightarrow}\phi$ $\mathbb{P}$-almost surely.
\end{lemma}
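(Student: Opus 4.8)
The statement is the classical Skorokhod representation theorem, so the plan is essentially to reproduce its standard proof: realise every $\mu_n$ and $\mu$ as laws of random variables on the \emph{single} probability space $(\Omega,\mathcal{F},\mathbb{P})=([0,1],\mathcal{B}([0,1]),\lambda)$, $\lambda$ being Lebesgue measure, by an explicit coupling built up along a nested sequence of countable Borel partitions of $\mathcal{X}$ whose cells shrink in diameter and have $\mu$-negligible boundaries. Separability of $S:=\operatorname{supp}\mu$ is what forces the cell diameters to tend to $0$, and the weak convergence $\mu_n\Rightarrow\mu$ enters through the portmanteau theorem, one cell at a time.

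First I would fix, for each level $k\in\N$, a countable Borel partition $\{A_{k,j}\}_{j\ge1}$ of $\mathcal{X}$ with: (i) every cell meeting $S$ has diameter $<2^{-k}$ — cover $S$ by balls of radius $<2^{-k-1}$ around a fixed countable dense subset of $S$, then disjointify; (ii) $\mu(\partial A_{k,j})=0$ for all $j$ — for a fixed centre only countably many radii yield a sphere of positive $\mu$-mass, so one perturbs the radii to avoid them; (iii) the level-$(k+1)$ partition refines the level-$k$ one. Discarding the $\mu$-null cells leaves a family still covering $S$ up to a $\mu$-null set, so $\sum_j\mu(A_{k,j})=1$ at every level. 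By (ii) each surviving $A_{k,j}$ is a $\mu$-continuity set, so the portmanteau theorem yields $\mu_n(A_{k,j})\to\mu(A_{k,j})$ as $n\to\infty$ for all $k,j$.

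Next I would perform the coupling. On $[0,1]$, lay down consecutive subintervals of lengths $\mu(A_{1,j})$; inside each, lay down subintervals of lengths $\mu(A_{2,j'})$ over the level-$2$ cells contained in that level-$1$ cell; iterate over $k$. For $\lambda$-a.e.\ $u$ this produces a decreasing chain $A_{1,j_1(u)}\supseteq A_{2,j_2(u)}\supseteq\cdots$ of nonempty cells with diameters $\to0$; using separability (and inner regularity) of $\mu$ on $S$, this chain intersects in a single point, which I take to be $\phi(u)$, so $\phi$ has law $\mu$. For each $n$ I would run the same construction with $\mu_n(A_{k,j})$ in place of $\mu(A_{k,j})$, but only down to a level $k(n)\uparrow\infty$ chosen, via the convergence above, so that the level-$k(n)$ cut-points for $\mu_n$ differ from those for $\mu$ by total length $\le\varepsilon_n$ with $\sum_n\varepsilon_n<\infty$; listing cells in the same order in both constructions then forces $\phi_n(u)$ and $\phi(u)$ into the same level-$k(n)$ cell except on a set of $u$ of $\lambda$-measure $\le\varepsilon_n$. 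By construction $\phi_n$ has law $\mu_n$; and whenever $\phi_n(u),\phi(u)$ share a level-$k$ cell, (i) gives $d(\phi_n(u),\phi(u))<2^{-k}$. Since $\sum_n\varepsilon_n<\infty$, Borel--Cantelli gives $\phi_n\to\phi$ $\lambda$-a.s., which is the assertion after relabelling $([0,1],\mathcal{B}([0,1]),\lambda)$ as $(\Omega,\mathcal{F},\mathbb{P})$.

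I expect the main obstacle to be the coupling in the third paragraph: aligning the $\mu_n$-interval partition with the $\mu$-interval partition level by level, while still reproducing the exact masses $\mu_n(A_{k,j})$ \emph{and} keeping $\phi_n,\phi$ in a common cell with high probability — this is where the thresholds $k(n)$ and the summable errors $\varepsilon_n$ must be carefully balanced, and where separability (plus regularity) of $\operatorname{supp}\mu$ is genuinely used. As a cross-check and a far shorter route when $\mathcal{X}=\R$, one can bypass the whole scheme with the quantile coupling $\phi_n:=F_{\mu_n}^{-1}(U)$, $\phi:=F_\mu^{-1}(U)$ for $U\sim\mathrm{Unif}[0,1]$ with left-continuous inverse distribution functions, since $F_{\mu_n}^{-1}\to F_\mu^{-1}$ at every continuity point of $F_\mu^{-1}$ — all but countably many — giving the almost sure convergence directly.
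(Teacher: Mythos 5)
The paper does not prove this lemma at all: it is quoted verbatim as the classical Skorokhod representation theorem and invoked as a black box in the proof of Theorem \ref{SLLN-Ergodicity}, so there is no in-paper argument to compare yours against. Your plan is the standard textbook construction (Billingsley/Dudley): nested partitions into small-diameter $\mu$-continuity sets, the portmanteau theorem for cellwise convergence of masses, and an interval coupling on $[0,1]$. The skeleton is sound and the quantile-coupling cross-check on $\R$ is correct as stated.

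Two steps, however, need more than you have written. First, defining $\phi(u)$ as \emph{the} point of $\bigcap_k A_{k,j_k(u)}$ is not justified: a decreasing chain of nonempty Borel sets with diameters tending to $0$ can have empty intersection in a metric space that is not complete, and neither separability nor inner regularity repairs this. You either need completeness of $\mathcal{X}$ (available in the paper's application, where $\mathcal{X}$ is Polish, but not granted by the statement) or you should avoid the intersection altogether by enlarging $\Omega$ to a product containing a factor $(\mathcal{X},\mathfrak{B},\mu)$ and factors carrying the conditional laws $\mu(\cdot\mid A_{k,j})$, so that $\phi$ is assembled from honest $\mathcal{X}$-valued coordinates. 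Second, after truncating the $\mu_n$-subdivision at level $k(n)$ you have only decided \emph{which cell} $\phi_n(u)$ lies in; to make the law of $\phi_n$ exactly $\mu_n$ you must still sample from $\mu_n(\cdot\mid A_{k(n),j})$ inside that cell, which requires auxiliary randomness and, if one insists on $\Omega=[0,1]$, requires these conditional laws to be realizable there (automatic for Polish $\mathcal{X}$, not for a bare metric space, and note that the $\mu_n$ need not inherit separable support from $\mu$). Both repairs are standard --- they are precisely why the classical proofs take the common probability space to be a countable product rather than $([0,1],\lambda)$ alone --- and with them your argument goes through.
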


\begin{theorem}\label{SLLN-Ergodicity}
\textbf{(The SLLN under Ergodicity assumption)}
Suppose that all the conditions of Theorem \ref{existence-Ergodicity} hold. Then
$$\underset{T\rightarrow\infty}{\lim}\frac{1}{T}\int_0^T\phi(X_t,Y_t)dt=\langle\mu,\phi\rangle,~\mathbb{P}-a.e.,$$
where $\phi(X_t,Y_t)\triangleq \mathbb{E}||K^*(X(t,x))-Y(t,y)||_{\mathcal{X}}^2$ is the observable function.
\end{theorem}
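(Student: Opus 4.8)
The plan is to upgrade the $\mu$-a.e. statement of Theorem~\ref{existence-Ergodicity} to a $\mathbb{P}$-a.e. statement by transferring the almost-sure convergence across the two probability spaces via Skorokhod's representation theorem (Lemma~\ref{Skorokhod's representation theorem}). First I would recall the setup from the proof of Theorem~\ref{existence-Ergodicity}: for each $T>0$ and each partition step $h_m=T/m$ we have the Birkhoff sum $\frac{1}{m}\sum_{i=0}^{m-1}\phi(\Psi^i(x,y))$, which converges as $m\to\infty$ to $\langle\mu,\phi\rangle$ for $\mu$-a.e. $(x,y)$, where $\Psi$ is the ergodic, measure-preserving transformation on $Pr(\mathcal{X})$ induced by the skew product flow $\Pi$ acting on $KX_t-Y_t$. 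The time-average $\frac{1}{T}\int_0^T\phi(X_t,Y_t)\,dt$ equals this Birkhoff limit in the $T\to\infty$ limit, so the content of the theorem is the identity between the time-average (defined pathwise, hence naturally a $\mathbb{P}$-a.e. object once we fix the Brownian path $\omega$) and the phase-space average $\langle\mu,\phi\rangle$.

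Next I would make the measure-transfer step precise. The empirical occupation measures $\mu_T := \frac{1}{T}\int_0^T \delta_{(X_t,Y_t)}\,dt$ form a sequence in $Pr_2(\mathcal{X}\times\mathcal{X})$; under the ergodicity assumption (HE) together with the skew-product structure (Proposition from \cite{[CL+23]}) one knows the orbit closure is compact and $\mu_T$ converges weakly to the invariant ergodic measure $\mu$. The support of $\mu$ lies in the Polish space $\mathcal{X}$, hence is separable, so Skorokhod's representation theorem applies: there exist random variables $\phi_n,\phi$ on a common probability space whose laws are $\mu_n,\mu$ and with $\phi_n\to\phi$ $\mathbb{P}$-almost surely. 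Applying the bounded (or uniformly integrable, using (H3)) continuous observable $\phi$ and dominated convergence then yields $\int \phi\,d\mu_T \to \int\phi\,d\mu = \langle\mu,\phi\rangle$ along the representing sequence, and since $\int\phi\,d\mu_T = \frac{1}{T}\int_0^T\phi(X_t,Y_t)\,dt$, this gives the claimed limit $\mathbb{P}$-a.e. I would also invoke Lemma~\ref{Birkhoff ergodic theorem} once more to identify the limit with $\langle\mu,\phi\rangle$ rather than merely asserting convergence of the time-averages, tying the two representations together exactly as in the displayed chain of equalities at the end of Section~\ref{Settings and main results}.

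The main obstacle, and the step I would be most careful about, is the passage from ``$\mu$-a.e. in the phase variable $(x,y)$'' to ``$\mathbb{P}$-a.e. in the Brownian path $\omega$'': these are a priori two different null-set qualifications, and the clean statement requires that the law of $(X_t,Y_t)$ (pushed forward appropriately) be absolutely continuous with respect to, or compatible with, the ergodic invariant measure $\mu$ along $\mathbb{P}$-typical paths. This is exactly what the non-degeneracy of the noise buys us (as remarked after Definition~\ref{tempered} and in the discussion of the Wiener shift, Lemma~\ref{ergodic dynamical system}): the process started from a $\mu$-typical point generates, $\mathbb{P}$-a.s., a trajectory whose empirical measure equilibrates to $\mu$, so the two exceptional sets can be made to coincide. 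I would handle this by first establishing the result for $\mu$-a.e. initial datum and then using the measure-preserving property of $\Psi$ together with Fubini on $(\mathcal{X}\times\mathcal{X})\times\Omega$ to conclude that the set of $\omega$ for which the time-average fails to converge to $\langle\mu,\phi\rangle$ has $\mathbb{P}$-measure zero. A secondary technical point is verifying the uniform integrability needed to pass $\phi$ through the weak limit, which follows from (H3) (coerciveness) bounding $\mathbb{E}\|X_t\|^2$ and $\mathbb{E}\|Y_t\|^2$ uniformly in $t$, hence $\phi\in\mathcal{L}^1(\mu_T)$ uniformly.
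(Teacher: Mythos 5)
Your proposal follows essentially the same route as the paper's proof: both rest on tightness of probability measures on the Polish space $\mathcal{X}$, Prokhorov's theorem, and Skorokhod's representation theorem (Lemma \ref{Skorokhod's representation theorem}) to upgrade the $\mu$-a.e. conclusion of Theorem \ref{existence-Ergodicity} to a $\mathbb{P}$-a.e. one. If anything, your handling of the transfer between the two null sets (the Fubini argument on $(\mathcal{X}\times\mathcal{X})\times\Omega$ and the uniform integrability supplied by (H3)) is more explicit than the paper's, which concludes simply that $\mu$ ``can be replaced'' with $\mathbb{P}$.
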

\begin{proof}
Comparing the results of Theorem \ref{existence-Ergodicity} with the proof process, we only need to prove the convergence of the measure additionally.

Obviously, $\mathcal{X}$ is a Polish space, then for every probability measure on $\mathcal{X}$ is tight (Remark \ref{Polish} $(ii)$). By Prokhorov's theorem (Lemma \ref{Prokhorov's theorem}), the collection of probability measures on $\mathcal{X}$ is precompact in the topology of weak convergence. According to Definition \ref{Precompact in the topology of weak convergence}, for all sequence $\{\mu_n\}_{n\in\N}$ in the probability measure collection $\mathcal{M}$ has weakly convergent subsequences on $\mathcal{X}$.

For a fixed $T>0$, there exists an invariant measure $\mu_T$. As $T\rightarrow\infty$, we can obtain a subsequence, denoted as $\{\mu_n\}_{n\in\N}$. Suppose that $\mu_n$ converges weakly to some probability measure $\mu_0$ on $\mathcal{L}^2(Pr(\mathcal{X}))$, i.e., $$\mu_n\overset{n\rightarrow\infty}{\rightharpoonup}\mu_0,$$ and the support of $\mu_0$ is separable.

According to Skorokhod's representation theorem (Lemma \ref{Skorokhod's representation theorem}), there exist $\mathcal{X}$-valued random variables $\phi_n,\phi$ defined on a common probability space $(\Omega,\mathcal{F},\mathbf{P})$ such that the law of $\phi_n$ is $\mu_n$ for all $n$, the law of $\phi$ is $\mu_0$, and $$\phi_n\overset{n\rightarrow\infty}{\rightarrow}\phi,~\mathbf{P}-a.s..$$

For the Wiener measure $\mathbb{P}$, we can take the same operation as $\mu$. Let $\mathbb{P}_n\in Pr(C(\mathbb{T},\R^n)$ be the law of $\phi_n$ for all $n$, and $\mathbb{P}_n$ converge weakly to some probability measure $\mathbb{P}_0$ on $\mathcal{L}^2(Pr(C(\mathbb{T},\R^n))$, i.e., $$\mathbb{P}_n\overset{n\rightarrow\infty}{\rightharpoonup}\mathbb{P}_0,$$
where $\mathbb{P}_0$ is the law of $\phi$.
Then we can replace $\mu$ in Theorem \ref{existence-Ergodicity} with $\mathbb{P}$.
\end{proof}

\subsection{The SLLN under Dissipation assumption}
\ \ \ \ Recall the Dissipation assumption (HD):\\
\textbf{(HD) (Dissipation)}

For all $X_t,Y_t\in \mathcal{X}$, $t\in[0,T],$ there exist positive constants $\alpha_1>0$ such that
$$\begin{aligned}
2\langle \frac{\partial K}{\partial X^T}f(t,X_t)-g(t,Y_t),KX_t-Y_t\rangle_{\mathcal{X}}+||\frac{\partial K}{\partial X^T}\sigma(t,X_t)-\varsigma(t,Y_t)||_{\mathcal{L}^2}^2
&\leq -\alpha_1||KX_t-Y_t||_{\mathcal{X}}^2.
\end{aligned}$$
\begin{theorem}\label{existence-Dissipation}
Suppose that $X_t,Y_t\in\mathcal{X}$ are the 
solutions of equation (\ref{2nonlinear}) and (H1)-(H4) hold. If (HD) holds, then there exists $K^*$ such that the cost functional $\underset{T\rightarrow+\infty}{\lim}J[K^*]=0,~\mathbb{P}-a.e.$.
\end{theorem}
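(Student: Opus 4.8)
The plan is to track the discrepancy process $\Psi_t := KX_t - Y_t$, whose dynamics are recorded in (\ref{Psi}), and to extract from (HD) an exponential contraction that kills the time average defining $J[K]$. The candidate $K^*$ is the admissible homeomorphism for which (HD) holds — equivalently, the minimizer supplied by Theorem \ref{existence}, which can only do better — so it suffices to prove $\lim_{T\to\infty}J[K]=0$ for such a $K$, since $0\le J[K^*]\le J[K]$. First I would apply It\^o's formula to $\|\Psi_t\|_{\mathcal{X}}^2$ using (\ref{Psi}); the Jacobian $\frac{\partial K}{\partial X^T}$ enters exactly as in the statement of (HD), and one gets
\begin{equation*}
d\|\Psi_t\|_{\mathcal{X}}^2 = \Big[2\big\langle \tfrac{\partial K}{\partial X^T}f(t,X_t)-g(t,Y_t),\Psi_t\big\rangle_{\mathcal{X}} + \big\|\tfrac{\partial K}{\partial X^T}\sigma(t,X_t)-\varsigma(t,Y_t)\big\|_{\mathcal{L}^2}^2\Big]dt + dM_t,
\end{equation*}
where $M_t := 2\int_0^t\big\langle \Psi_s,\big(\tfrac{\partial K}{\partial X^T}\sigma(s,X_s)-\varsigma(s,Y_s)\big)dW_s\big\rangle$ is a continuous local martingale. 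By (HD) the bracketed drift is $\le -\alpha_1\|\Psi_t\|_{\mathcal{X}}^2$, so $d\|\Psi_t\|_{\mathcal{X}}^2 \le -\alpha_1\|\Psi_t\|_{\mathcal{X}}^2\,dt + dM_t$.

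Taking expectations (the It\^o integral being a true martingale after the usual localization, with the moment bounds coming from (H2)--(H3)) gives $\tfrac{d}{dt}\mathbb{E}\|\Psi_t\|_{\mathcal{X}}^2\le -\alpha_1\,\mathbb{E}\|\Psi_t\|_{\mathcal{X}}^2$, so by Gronwall $\mathbb{E}\|\Psi_t\|_{\mathcal{X}}^2\le e^{-\alpha_1 t}\|Kx_0-y_0\|_{\mathcal{X}}^2$, and Fubini yields
\begin{equation*}
J[K]=\frac1T\int_0^T\mathbb{E}\|\Psi_t\|_{\mathcal{X}}^2\,dt \;\le\; \frac{\|Kx_0-y_0\|_{\mathcal{X}}^2}{\alpha_1 T}\;\longrightarrow\;0 \quad(T\to\infty),
\end{equation*}
which already gives the conclusion (and equals $0$ outright when $y_0=K(x_0)$).

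To obtain the $\mathbb{P}$-a.e.\ (pathwise) time-average statement — the SLLN form, Theorem \ref{SLLN-Dissipation} — I would instead integrate the differential inequality over $[0,T]$, drop the nonpositive term $-\|\Psi_T\|_{\mathcal{X}}^2$, and rearrange to
\begin{equation*}
\frac1T\int_0^T\|\Psi_t\|_{\mathcal{X}}^2\,dt \;\le\; \frac{\|Kx_0-y_0\|_{\mathcal{X}}^2}{\alpha_1 T} + \frac{M_T}{\alpha_1 T},
\end{equation*}
and then show $M_T/T\to0$ a.s. Since $d\langle M\rangle_t \le 4\big\|\tfrac{\partial K}{\partial X^T}\sigma(t,X_t)-\varsigma(t,Y_t)\big\|_{\mathcal{L}^2}^2\|\Psi_t\|_{\mathcal{X}}^2\,dt$ and the factor in front is controlled by combining (HD) with the coercivity/monotonicity bounds (H2)--(H3), one gets $\langle M\rangle_T = O\big(\int_0^T\|\Psi_t\|_{\mathcal{X}}^2 dt\big)=O(T)$ a.s.; the strong law of large numbers for continuous martingales then gives $M_T/T\to0$ a.s. Hence $\lim_{T\to\infty}\frac1T\int_0^T\|\Psi_t\|_{\mathcal{X}}^2\,dt=0$ $\mathbb{P}$-a.s., and dominated convergence (via the exponential bound) passes this through $\mathbb{E}$ to conclude $\lim_{T\to\infty}J[K^*]=0$.

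The main obstacle is exactly this last upgrade from the mean estimate to the almost-sure statement: obtaining an honest $\langle M\rangle_T=O(T)$ bound hinges on controlling $\big\|\tfrac{\partial K}{\partial X^T}\sigma(t,X_t)-\varsigma(t,Y_t)\big\|_{\mathcal{L}^2}^2$, which is not immediate from (HD) alone and must be squeezed out of (H2)--(H3) (or one strengthens the hypotheses, e.g.\ with a temperedness/boundedness assumption on the diffusion mismatch). A secondary technicality is making It\^o's formula rigorous for the possibly nonlinear homeomorphism $K$ — differentiability and local integrability of $\frac{\partial K}{\partial X^T}$ — together with the interchanges of limit, time-integral, and expectation.
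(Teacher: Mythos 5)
Your proposal is correct and follows essentially the same route as the paper: both apply It\^o's formula to $\|KX_t-Y_t\|_{\mathcal{X}}^2$, use (HD) to bound the drift by $-\alpha_1\|KX_t-Y_t\|_{\mathcal{X}}^2$, and deduce exponential decay of the second moment (the paper packages this step as a Lyapunov-function lemma cited from Mao rather than running Gronwall directly), whence the time average vanishes. Your extra martingale-SLLN step for the pathwise statement goes beyond what the paper actually carries out, but the core argument coincides.
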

\begin{proof}
Let $\varphi(t)=KX_t-Y_t$,
$$\begin{aligned}
d\varphi(t)&=\big(\frac{\partial K}{\partial X^T}f(t,X_t)-g(t,Y_t)\big)dt+\big(\frac{\partial K}{\partial X^T}\sigma(t,X_t)-\varsigma(t,Y_t)\big)dW_t\\
&\triangleq F(t,\varphi)dt+G(t,\varphi)dW_t.
\end{aligned}$$
Let $C^{2,1}(\mathbb{R}^n\times\mathbb{T};~\mathbb{R}^+)$ denote the family of all nonnegative functions $V(\varphi,t)$ which are continuously twice differentiable in $\varphi$ and once differentiable in $t$.
Let $V(\varphi,t)\in C^{2,1}(\mathbb{R}^n_0\times\mathbb{T};~\mathbb{R}^+)$ be a positive definite Lyapunov function, where $\mathbb{R}^n_0=\mathbb{R}^n-\{\mathbf{0}\}$ and $V(\mathbf{0},\cdot)=0$. Define an operator $LV(\varphi,t):\mathbb{R}^n\times\mathbb{T}\rightarrow\mathbb{R}$ by
$$LV(\varphi,t)=V_t(\varphi,t)+V_\varphi(\varphi,t)F(\varphi,t)+\frac{1}{2}
Tr[G^T(\varphi,t)V_{\varphi\varphi}(\varphi,t)G(\varphi,t)],$$
where
$$V_t(\varphi,t)=\frac{\partial V(\varphi,t)}{\partial t},
~V_\varphi(\varphi,t)=\frac{\partial V(\varphi,t)}{\partial \varphi},
~V_{\varphi\varphi}(\varphi,t)=\bigg(\frac{\partial^2V(\varphi,t)}{\partial \varphi^2}\bigg)_{n\times n}.$$

Obviously, $V(\varphi,t)=||\varphi||^2$ is a positive definite Lyapunov function, under the Dissipation assumption (HD), we can calculate that
$$\begin{aligned}
LV(\varphi,t)&=0+2\varphi^TF(t,\varphi)+\frac{1}{2}
Tr[G^T(\varphi,t)2G(\varphi,t)]\\
&=2\langle \frac{\partial K}{\partial X^T}f(t,X_t)-g(t,Y_t),KX_t-Y_t\rangle_{\mathcal{X}}+||\frac{\partial K}{\partial X^T}\sigma(t,X_t)-\varsigma(t,Y_t)||_{\mathcal{L}^2}^2\\
&\leq -\alpha_1||KX_t-Y_t||_{\mathcal{X}}^2\\
&=-\alpha_1||\varphi||^2.
\end{aligned}$$
According to Lemma \ref{Lyapunov function theorem},
$$\underset{t\rightarrow\infty}{\lim\sup}\frac{1}{t}\log(\mathbb{E}||\varphi(t)||^2)\leq-\alpha_1.$$
It follows that
$$\underset{t\rightarrow+\infty}{\lim}\mathbb{E}(||K^*X_t-Y_t||^2_{\mathcal{X}})=0,$$
i.e., $X(t,x_0)$ and $Y(t,K^*(x_0))$ satisfy asymptotic similarity (Definition \ref{Asymptotic similarity}).

Further,
$$\underset{T\rightarrow+\infty}{\lim}\mathbb{E}\bigg[\dfrac{1}{T} \int_{0}^{T}||K^*(X(t,x_0))-Y(t,K^*(x_0))||^2 dt\bigg]=0,~\mathbb{P}-a.e.,$$
i.e., the cost functional (\ref{conjugation functional}) satisfies
\begin{equation}\label{J[K]=0}
\underset{T\rightarrow+\infty}{\lim}J[K^*]=0,~\mathbb{P}-a.e..
\end{equation}
\end{proof}

We can write Theorem \ref{existence-Dissipation} in the form of SLLN (Theorem \ref{SLLN-Dissipation}): 
\begin{theorem}\label{SLLN-Dissipation}
\textbf{(The SLLN under Dissipation assumption)}
Suppose that all the conditions of Theorem \ref{existence-Dissipation} hold. Then
$$\underset{T\rightarrow\infty}{\lim}\frac{1}{T}\int_0^T\phi(X_t,Y_t)dt=0,~\mathbb{P}-a.e.,$$
where $\phi(X_t,Y_t)\triangleq \mathbb{E}||K^*(X(t,x_0))-Y(t,K^*(x_0))||_{\mathcal{X}}^2$ is the observable function.
\end{theorem}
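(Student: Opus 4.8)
The plan is to deduce the time-averaged statement directly from Theorem \ref{existence-Dissipation}, which already gives $\lim_{t\to+\infty}\mathbb{E}\|\varphi(t)\|^2_{\mathcal{X}}=0$ where $\varphi(t)=K^*X_t-Y_t$, together with the definition $\phi(X_t,Y_t)\triangleq\mathbb{E}\|K^*(X(t,x_0))-Y(t,K^*(x_0))\|_{\mathcal{X}}^2=\mathbb{E}\|\varphi(t)\|^2_{\mathcal{X}}$. The key elementary fact is that if a nonnegative function $t\mapsto\phi(X_t,Y_t)$ converges to $0$ as $t\to\infty$, then its Cesàro average $\frac{1}{T}\int_0^T\phi(X_t,Y_t)\,dt$ also converges to $0$; this is a standard consequence of splitting the integral at a large time $T_0$ (beyond which $\phi<\varepsilon$) and noting that the contribution of $[0,T_0]$ is $O(1/T)$ while the contribution of $[T_0,T]$ is at most $\varepsilon$.

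I would carry out the steps as follows. First, recall from the proof of Theorem \ref{existence-Dissipation} that, under (H1)--(H4) and (HD), the Lyapunov function $V(\varphi,t)=\|\varphi\|^2$ satisfies $LV(\varphi,t)\le-\alpha_1\|\varphi\|^2$, and hence by Lemma \ref{Lyapunov function theorem} one has $\limsup_{t\to\infty}\frac1t\log\mathbb{E}\|\varphi(t)\|_{\mathcal{X}}^2\le-\alpha_1<0$, which forces $\lim_{t\to+\infty}\phi(X_t,Y_t)=\lim_{t\to+\infty}\mathbb{E}\|\varphi(t)\|_{\mathcal{X}}^2=0$. Second, fix $\varepsilon>0$ and choose $T_0=T_0(\varepsilon)$ such that $\phi(X_t,Y_t)<\varepsilon/2$ for all $t\ge T_0$. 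Third, for $T>T_0$ write
\begin{equation*}
\frac{1}{T}\int_0^T\phi(X_t,Y_t)\,dt=\frac{1}{T}\int_0^{T_0}\phi(X_t,Y_t)\,dt+\frac{1}{T}\int_{T_0}^{T}\phi(X_t,Y_t)\,dt\le\frac{1}{T}\int_0^{T_0}\phi(X_t,Y_t)\,dt+\frac{\varepsilon}{2}.
\end{equation*}
Since $\int_0^{T_0}\phi(X_t,Y_t)\,dt$ is a finite constant (the integrand is bounded on the compact interval $[0,T_0]$ by continuity of the solution in $\mathcal{X}$, as in Step 1 of Theorem \ref{existence-Ergodicity}), the first term is $<\varepsilon/2$ for all $T$ large enough, giving $\frac1T\int_0^T\phi(X_t,Y_t)\,dt<\varepsilon$; letting $\varepsilon\downarrow0$ yields the claim. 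Fourth, note this argument is pathwise in $\omega$ up to the $\mathbb{P}$-null set on which the exponential estimate from Lemma \ref{Lyapunov function theorem} fails, which is exactly why the conclusion holds $\mathbb{P}$-a.e.; alternatively one simply observes that $\frac1T\int_0^T\phi(X_t,Y_t)\,dt=J[K^*]$ and invokes \eqref{J[K]=0} directly, so the statement is in fact a restatement of Theorem \ref{existence-Dissipation}.

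The only genuine subtlety—hardly an obstacle—is justifying that $t\mapsto\phi(X_t,Y_t)$ is integrable on bounded intervals and that the convergence $\phi(X_t,Y_t)\to0$ is uniform enough to pass to the Cesàro mean; this is handled by the membership $X,Y\in\mathcal{X}$ and $K^*\in U[0,T]$ together with the exponential decay bound, so no new hypotheses are needed. I would therefore present the proof as a short deduction: restate the decay estimate from Theorem \ref{existence-Dissipation}, invoke the Cesàro-mean lemma for nonnegative functions tending to zero, and conclude $\lim_{T\to\infty}\frac1T\int_0^T\phi(X_t,Y_t)\,dt=0$, $\mathbb{P}$-a.e.
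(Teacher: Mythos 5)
Your proposal is correct and follows essentially the same route as the paper: the paper gives no separate proof of Theorem \ref{SLLN-Dissipation}, presenting it as a direct restatement of equation \eqref{J[K]=0} from Theorem \ref{existence-Dissipation}, whose proof already passes from the Lyapunov estimate $\limsup_{t\to\infty}\frac{1}{t}\log\mathbb{E}\|\varphi(t)\|^2\leq-\alpha_1$ to $\lim_{t\to\infty}\phi(X_t,Y_t)=0$ and then (implicitly, via the same Ces\`aro-mean step you spell out) to $\lim_{T\to\infty}\frac{1}{T}\int_0^T\phi(X_t,Y_t)\,dt=0$. Your explicit splitting of the integral at $T_0(\varepsilon)$ simply fills in the ``Further'' step that the paper leaves unjustified, which is a welcome addition rather than a deviation.
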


If $\underset{T\rightarrow+\infty}{\lim}J[K]=\text{const}$, then $J[K]$ has a minimum value on the finite interval $[0, T]$, and the $K$ corresponding to the minimum value is the $K^*$ we want. We give the following theorem as a summary of this section.

\begin{theorem}\label{existence}
\textbf{(Existence of $K^*$)}
Suppose that $X_t,Y_t\in\mathcal{X}$ are the 
solutions of equation (\ref{2nonlinear}) and (H1)-(H4) hold. Also suppose that (HE) or (HD) holds. Then there exists $K^*\in U[0,T]$ such that $J[K^*(\cdot)]=\underset{K(\cdot)\in U[0,T]}{\inf}J[K(\cdot)]$.
\end{theorem}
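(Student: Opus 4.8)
The plan is to read Theorem \ref{existence} as the synthesis of Theorems \ref{existence-Ergodicity} and \ref{existence-Dissipation}, reinforced by a direct-method argument showing that on a \emph{finite} horizon the infimum of $J[\cdot]$ over the admissible set $U[0,T]$ is actually attained; the dichotomy (HE) versus (HD) enters only at the very end, to identify the minimal value (a constant $\mu$-a.e., respectively $0$ $\mathbb{P}$-a.e.). So first I would fix $T\in(0,\infty)$, note that $J[K]\ge 0$ for every $K$ and $U[0,T]\neq\emptyset$, so that $m:=\inf_{K\in U[0,T]}J[K]$ is a finite nonnegative real number, and choose a minimizing sequence $\{K_n\}\subset U[0,T]$ with $J[K_n]\to m$.

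Next comes the compactness step. Because each $K_n$ maps $\Gamma_{x_0}$ into $\Gamma_{y_0}=\{Y(t,y_0):t\in[0,T]\}$, and this target set is compact in $\R^n$ by continuity of $t\mapsto Y(t,y_0)$, the family $\{K_n\}$ is bounded in the Hilbert space $\mathcal{L}^2(\mathcal{X};\mathcal{X})$, so a subsequence converges weakly, $K_n\rightharpoonup K^*$. Then I would use that for each $t$ the map $K\mapsto\|K(X_t)-Y_t\|_{\mathcal{X}}^2$ is convex (evaluation of $K$ is affine in $K$, and $\|\cdot\|^2$ is convex), whence $J$ is convex on the convex set $U[0,T]$; being also lower semicontinuous by Definition \ref{Cost functional}, $J$ is weakly lower semicontinuous, which gives $J[K^*]\le\liminf_n J[K_n]=m$. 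Provided $K^*\in U[0,T]$, the chain $m\le J[K^*]\le m$ forces $J[K^*]=m$, so $K^*$ is the desired minimizer. Finally, under (HE) I quote Theorem \ref{existence-Ergodicity} to conclude $\lim_{T\to\infty}J[K^*]=\mathrm{const}$ $\mu$-a.e., and under (HD) I quote Theorem \ref{existence-Dissipation} to conclude $\lim_{T\to\infty}J[K^*]=0$ $\mathbb{P}$-a.e.

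The step I expect to be the main obstacle is the admissibility of the weak limit, i.e. that $K^*$ is again a homeomorphism of $\Gamma_{x_0}$ onto $\Gamma_{y_0}$, since weak $\mathcal{L}^2$-limits need not preserve bijectivity or bicontinuity. I would address this by proving $U[0,T]$ is weakly closed: since it is convex, Mazur's lemma reduces this to strong closedness in $\mathcal{L}^2(\mathcal{X};\mathcal{X})$, and along a strongly convergent sequence of convex combinations of the $K_n$ one can pass to the limit using equicontinuity on the compact set $\Gamma_{x_0}$, closedness of $\Gamma_{y_0}$, and the fact that a continuous bijection between the compact trajectories is automatically a homeomorphism. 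The only genuinely delicate point is ruling out loss of injectivity in the limit, for which I would exploit the boundary normalization $K(x_0)=y_0$ imposed on competitors together with the monotonicity and Lipschitz structure (H2), (H4) along the trajectory. As a side check I would also invoke (H4) to guarantee continuous dependence of $X,Y$ on data, so that all the above bounds are uniform in $n$.
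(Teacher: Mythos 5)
Your overall route coincides with the paper's: Theorem \ref{existence} is proved there in three lines by citing Theorems \ref{existence-Ergodicity} and \ref{existence-Dissipation}, truncating to a finite time interval, and asserting that lower semi-continuity of $J$ yields a minimum on $[0,T]$. You supply what that one-liner is missing, namely the direct-method scaffolding: a minimizing sequence, a boundedness/weak-compactness step, convexity of $K\mapsto\|K(X_t)-Y_t\|_{\mathcal{X}}^2$ so that lower semi-continuity upgrades to weak lower semi-continuity, and then $J[K^*]\le\liminf_n J[K_n]=m$. That part is sound and is strictly more than the paper offers, since lower semi-continuity alone, without compactness, does not produce a minimizer.

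The genuine gap is exactly the one you flag, and your proposed repair does not close it. First, $U[0,T]$ consists of homeomorphisms $\Gamma_{x_0}\to\Gamma_{y_0}$ with no uniform modulus of continuity, so the minimizing sequence $\{K_n\}$ need not be equicontinuous on $\Gamma_{x_0}$, and an Arzel\`a--Ascoli-type passage to a continuous limit is unavailable. Second, Mazur's lemma gives strong convergence of convex combinations $\sum_j\lambda_j K_{n_j}$, but a convex combination of injective maps need not be injective (already for two homeomorphisms of an arc with opposite orientations the average can collapse points), so strong closedness of $U[0,T]$ in $\mathcal{L}^2(\mathcal{X};\mathcal{X})$ fails as stated; at best you place $K^*$ in the strong closure of $U[0,T]$, which contains non-injective and discontinuous maps. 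Third, (H2) and (H4) constrain the coefficients $f,g,\sigma,\varsigma$ of the state equations, not the competitors $K$, so they cannot be invoked to prevent loss of injectivity in the limit. The honest conclusions available are either that the infimum is attained on the closure $\overline{U[0,T]}$, or that the admissible class must be shrunk to something genuinely weakly closed (for instance a family of homeomorphisms with a common modulus of continuity for the maps and their inverses). The paper's own proof silently skips this point, so your attempt is no weaker than the published argument --- but neither establishes that the minimizer actually lies in $U[0,T]$.
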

\begin{proof}
The proof is direct.
According to Theorem \ref{existence-Ergodicity} and Theorem \ref{existence-Dissipation}, $\underset{T\rightarrow+\infty}{\lim}J[K^*]=\text{const}$. Then we make a truncation of the time interval. Due to the lower semi-continuity of $J[K]$, it has a minimum value on the finite interval $[0, T]$, i.e.,
$$
J[K^*(\cdot)]=\underset{K(\cdot)\in U[0,T]}{\inf}J[K(\cdot)].
$$
\end{proof}

\section{A stochastic maximum principle}\label{MAX}
\ \ \ \ In Section \ref{Sufficient}, we have proven the sufficient existence of the minimizer $K^*$. Naturally, one should think about the similarity between two stochastic systems. Motivated by this question, we study the necessary condition of the minimizer $K^*$, which is a stochastic maximum principle.\\

As mentioned in Section \ref{Settings and main results},
let $L(t,X_t,Y_t,K)=\dfrac{1}{T} ||KX(t,x_0 )-Y(t,y_0)||^2$, $h(X_T,Y_T)=||K(X_T)-Y_T||$ the cost functional (\ref{conjugation functional}) is changed into
\begin{equation}\label{cost functional}
\tilde{J}[K(\cdot)]=\mathbb{E}\bigg[\int_{0}^{T}L(t,X_t,Y_t,K) dt+h(X_T,Y_T)\bigg].
\end{equation}
It can be stated as minimizing the cost functional (\ref{cost functional}) by the optimal control $K^*$, i.e.,
$$
\tilde{J}[K^*(\cdot)]=\underset{K(\cdot)\in U[0,T]}{\inf}\tilde{J}[K(\cdot)].
$$
The corresponding $(X^*(\cdot),Y^*(\cdot))$ is called an optimal state process. We also call $(X^*(\cdot),Y^*(\cdot),K^*(\cdot))$ an optimal triple.\\

Now, we seek the necessary conditions that the optimal control should meet. Let $K(\cdot)\in U[0,T]$ make $K(\cdot)+K^*(\cdot)$ be an admissible control. It can be seen from the convexity of $U$ that for any $0\leq\epsilon\leq1$, $K^*(\cdot)+\epsilon K(\cdot)$ is also an admissible control. Let $K^\epsilon(\cdot)= K^*(\cdot)+\epsilon K(\cdot)$, and the corresponding solution to that state equation (\ref{2nonlinear}) is $(X^\epsilon(\cdot),Y^\epsilon(\cdot))$. For simplicity, we rewrite $\varphi(\cdot,X^*(\cdot),Y^*(\cdot),K^*(\cdot))$ as $\varphi^*(\cdot)$ and $\varphi(\cdot,X^\epsilon(\cdot),Y^\epsilon(\cdot),K^\epsilon(\cdot))$ as $\varphi^\epsilon(\cdot)$.
The following stochastic maximum principle is the main result of this section.

\begin{theorem}\label{maxsde}
\textbf{ (Maximum principle)} Let $(X^*(\cdot),Y^*(\cdot),K^*(\cdot))$ be an optimal triple of the control problem. Then there is a quad of processes $(p(\cdot),q(\cdot),r(\cdot),s(\cdot))$ satisfying the first order adjoint equations (\ref{adjoint equations}) and with probability 1, one has
\begin{equation}\label{geq0}
\langle H_K(t,X_t^*,Y_t^*,K^*,p_t,q_t,r_t,s_t),K \rangle_{\mathcal{L}^2}\geq0,
\end{equation}
for a.e. $t\in[0,T],~\forall~K(\cdot)\in U[0,T]$, where\\
$$\begin{aligned}
H&(t,X_t,Y_t,K,p_t,q_t,r_t,s_t)\triangleq\langle p_t,f_t\rangle_{\mathcal{L}^2}+\langle q_t,\sigma_t\rangle_{\mathcal{L}^2}+\langle r_t,g_t\rangle_{\mathcal{L}^2}+\langle s_t,\varsigma_t\rangle_{\mathcal{L}^2}+L(t,X_t,Y_t,K),\\
&(t,X_t,Y_t,K,p_t,q_t,r_t,s_t)\in[0,T]\times\R^n\times\R^n\times U[0,T]\times\R^n\times\R^{n\times d}\times\R^n\times\R^{n\times d},
\end{aligned}$$
and $H_K$ is the partial derivative of Hamiltonian function $H$ with respect to $K$.
\end{theorem}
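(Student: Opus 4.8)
The plan is to derive the maximum principle by the classical convex-variation (spike-free) argument adapted to this setting, where the control enters both drift and diffusion of the ``difference'' process $\varphi_t = K X_t - Y_t$ governed by \eqref{Psi}. First I would set up the variational equation: writing $K^\epsilon(\cdot) = K^*(\cdot) + \epsilon K(\cdot)$ and $(X^\epsilon, Y^\epsilon)$ for the corresponding state pair, I would introduce the first-order variation processes $\xi_t \triangleq \lim_{\epsilon\to 0}\tfrac{X_t^\epsilon - X_t^*}{\epsilon}$ and $\eta_t \triangleq \lim_{\epsilon\to 0}\tfrac{Y_t^\epsilon - Y_t^*}{\epsilon}$, which by (H1'), (H3') and (H4) satisfy linear SDEs with coefficients $f_X(t,X_t^*)$, $\sigma_X(t,X_t^*)$, $g_Y(t,Y_t^*)$, $\varsigma_Y(t,Y_t^*)$. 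The bounds (H3') and the Lipschitz hypothesis (H4) give the standard $\mathcal{L}^2$ estimates $\mathbb{E}\sup_{t}\|X_t^\epsilon - X_t^* - \epsilon\xi_t\|^2 = o(\epsilon^2)$ and likewise for $Y$; this is the routine a priori estimation step, carried out via Gronwall on moments of the SDEs defining $X^\epsilon - X^* - \epsilon\xi$ and $Y^\epsilon - Y^* - \epsilon\eta$.

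Next I would compute the Gâteaux derivative of the cost. From \eqref{cost functional} and the chain rule,
\begin{equation*}
\frac{d}{d\epsilon}\tilde{J}[K^\epsilon]\Big|_{\epsilon=0}
= \mathbb{E}\Big[\int_0^T \big(\langle L_X,\xi_t\rangle + \langle L_Y,\eta_t\rangle + \langle L_K, K\rangle\big)\,dt
+ \langle h_X(X_T^*,Y_T^*),\xi_T\rangle + \langle h_Y(X_T^*,Y_T^*),\eta_T\rangle\Big],
\end{equation*}
where $L=L(t,X_t^*,Y_t^*,K^*)$. Optimality of $K^*$ over the convex set $U[0,T]$ forces this directional derivative to be $\geq 0$ for every admissible direction $K(\cdot)$. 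Then I would introduce the adjoint processes $(p,q,r,s)$ as the solutions of the backward SDEs \eqref{adjoint equations} whose generators are built from $f_X^\top$, $g_Y^\top$, $\sigma_X^\top$, $\varsigma_Y^\top$ and the running/terminal costs $L_X, L_Y, h_X, h_Y$ — the existence and uniqueness of these BSDEs following from the boundedness (H3') and Lipschitz (H4) conditions via the standard BSDE theory. Applying Itô's formula to $\langle p_t,\xi_t\rangle$ and $\langle r_t,\eta_t\rangle$ and taking expectations, the terms involving $\xi_T, \eta_T, L_X, L_Y$ cancel by the defining equations of the adjoints, leaving precisely $\mathbb{E}\int_0^T \langle H_K(t,X_t^*,Y_t^*,K^*,p_t,q_t,r_t,s_t), K\rangle\,dt \geq 0$ for all $K(\cdot)\in U[0,T]$, where $H$ is the Hamiltonian displayed in the statement. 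Finally, a standard measurable-selection / Lebesgue-point argument upgrades this integrated inequality to the pointwise assertion \eqref{geq0}, valid for a.e.\ $t$ with probability $1$.

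The main obstacle I anticipate is the justification of the variational equation and the duality computation in this infinite-dimensional state space $\mathcal{X}$, where $K$ itself is an $\mathcal{L}^2(\mathcal{X};\mathcal{X})$-valued control and the ``derivative'' $\partial K/\partial X^\top$ appearing in \eqref{Psi} must be given a precise meaning; care is needed to ensure that the Itô formula applied to $\langle p_t,\xi_t\rangle$ is legitimate, that all stochastic integrals are genuine martingales (so their expectations vanish), and that the homeomorphism constraint in the admissible set $U[0,T]$ is compatible with taking convex combinations $K^* + \epsilon K$ — the footnote asserting convexity of $U[0,T]$ is what makes the convex-variation method applicable here. The remaining steps (a priori estimates, cancellation of adjoint terms, passage from integral to pointwise inequality) are routine once the functional-analytic framework is pinned down.
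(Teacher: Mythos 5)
Your proposal is correct and follows essentially the same route as the paper's proof: a convex perturbation $K^\epsilon=K^*+\epsilon K$, first-order variational equations with the $o(\epsilon)$ estimate via Gronwall, duality with the adjoint BSDEs through It\^{o}'s formula applied to $\langle p_t,\hat{X}_t\rangle$ and $\langle r_t,\hat{Y}_t\rangle$, and a final localization from the integrated inequality to the pointwise one (the paper does this by testing against indicators $\mathbf{1}_A$, $A\in\mathcal{F}_t$, rather than a Lebesgue-point argument, but these are interchangeable). The obstacles you flag, in particular the meaning of $\partial K/\partial X^{T}$ and the convexity of $U[0,T]$, are exactly the points the paper leans on without further elaboration.
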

\begin{proof}
Its proof can be divided into the following steps:\\

\textbf{Step 1. Variational inequality.}

Due to $\underset{T\rightarrow+\infty}{\lim}J[K]=\text{constant}$, we make a truncation of the time interval. We do not consider $t>T$, but consider the terminal term $h(X_T,Y_T)=||K(X_T)-Y_T||$.
Then the cost functional (\ref{conjugation functional}) is changed into
\begin{equation}
\tilde{J}[K(\cdot)]=\mathbb{E}\bigg[\int_{0}^{T}\dfrac{1}{T} ||KX(t,x_0 )-Y(t,y_0)||^2dt+h(X_T,Y_T)\bigg].
\end{equation}
Let $L(t,X_t,Y_t,K)=\dfrac{1}{T} ||KX(t,x_0 )-Y(t,y_0)||^2$, then the following variational inequality holds:
\begin{equation*}
\begin{aligned}
\frac{d}{d\epsilon}\tilde{J}[K^*(\cdot)+\epsilon K(\cdot)]|_{\epsilon=0}&=\underset{\epsilon\rightarrow0}{\lim}\frac{1}{\epsilon}\{\tilde{J}[K^\epsilon(\cdot)]-\tilde{J}[K^*(\cdot)]\}\\
&=\underset{\epsilon\rightarrow0}{\lim}\frac{1}{\epsilon}\mathbb{E}\int_0^T(L^\epsilon(t)-L^*(t))dt
+\underset{\epsilon\rightarrow0}{\lim}\frac{1}{\epsilon}\mathbb{E}[h^\epsilon(\cdot)-h^*(\cdot)]\\
&\geq0.
\end{aligned}
\end{equation*}
For the right side of the above equation, we have
$$\begin{aligned}
\frac{1}{\epsilon}\mathbb{E}\int_0^T(L^\epsilon(t)-L^*(t))dt=&\frac{1}{\epsilon}\mathbb{E}\int_0^T(L^\epsilon(t)-L(t,X^*(t),Y^\epsilon(t),K^\epsilon))dt\\
&+\frac{1}{\epsilon}\mathbb{E}\int_0^T(L(t,X^*(t),Y^\epsilon(t),K^\epsilon)-L(t,X^*(t),Y^*(t),K^\epsilon))dt\\
&+\frac{1}{\epsilon}\mathbb{E}\int_0^T(L(t,X^*(t),Y^*(t),K^\epsilon)-L^*(t))dt,\\
\frac{1}{\epsilon}\mathbb{E}[h^\epsilon(\cdot)-h^*(\cdot)]=&\frac{1}{\epsilon}\mathbb{E}[h^\epsilon(\cdot)-h(X^*_T,Y^\epsilon_T)]
+\frac{1}{\epsilon}\mathbb{E}[h(X^*_T,Y^\epsilon_T)-h^*(\cdot)].
\end{aligned}$$

In order to study the two terms of the above variational inequality, we make some preparations in the next two steps.\\

\textbf{Step 2. Variational equations.}

Introduce the variational equations:
\begin{equation}\label{variational equations}
\begin{aligned}	
&\left\{  \begin{aligned}
d\hat{X_t} &=f_X^*(t)\hat{X_t}dt+\sigma_X^*(t)\hat{X_t}dW_t,\\
\hat{X_0}&=\mathbf{0}, \\ \end{aligned}  \right.\\
&\left\{  \begin{aligned}
d\hat{Y_t} &=g_Y^*(t)\hat{Y_t}dt+\varsigma_Y^*(t)\hat{Y_t}dW_t,\\
\hat{Y_0}&=\mathbf{0}.\\  \end{aligned}  \right.
\end{aligned}
\end{equation}
\begin{proposition}\label{variational}
Let $(X_t^\epsilon,Y_t^\epsilon),(X_t^*,Y_t^*)$ satisfy (\ref{2nonlinear}) and $(\hat{X_t},\hat{Y_t})$ satisfy (\ref{variational equations}). Then
$$
\mathbb{E}||X_t^\epsilon-X_t^*-\epsilon\hat{X_t}||_{\mathcal{X}}^2\leq C_\epsilon\epsilon^2,~ \mathbb{E}||Y_t^\epsilon-Y_t^*-\epsilon\hat{Y_t}||_{\mathcal{X}}^2\leq C_\epsilon\epsilon^2,
$$
where $C_\epsilon\rightarrow0$ as $\epsilon\rightarrow0$.
\end{proposition}
\begin{proof} Let $\xi^\epsilon_t=X_t^\epsilon-X_t^*-\epsilon\hat{X_t}$ and $\eta^\epsilon_t=Y_t^\epsilon-Y_t^*-\epsilon\hat{Y_t}$. Substituting stochastic differential equations (\ref{2nonlinear}) and (\ref{variational equations}), we can get that $\xi^\epsilon_t$ satisfies the following SDE:
$$
d\xi^\epsilon_t=(f^\epsilon(t)-f^*(t)-\epsilon f_X^*(t)\hat{X_t})dt+(\sigma^\epsilon(t)-\sigma^*(t)-\epsilon \sigma_X^*(t)\hat{X_t})dW_t.
$$
By using It\^{o}'s formula (Lemma \ref{Ito formula}), we can obtain
$$\begin{aligned}
d||\xi^\epsilon_t||_{\mathcal{X}}^2=&2\langle f^\epsilon(t)-f^*(t)-\epsilon f_X^*(t)\hat{X_t},\xi^\epsilon_t\rangle_{\mathcal{X}} dt\\
&+2\langle\xi^\epsilon_t,(\sigma^\epsilon(t)-\sigma^*(t)-\epsilon \sigma_X^*(t)\hat{X_t})dW_t\rangle_{\mathcal{X}}\\
&+||\sigma^\epsilon(t)-\sigma^*(t)-\epsilon \sigma_X^*(t)\hat{X_t}||_{\mathcal{L}^2}^2dt.
\end{aligned}
$$

Notice that $\xi^\epsilon_0=\mathbf{0}$, $||\xi^\epsilon_0||_{\mathcal{X}}^2=0$. Then
$$\begin{aligned}
||\xi^\epsilon_t||_{\mathcal{X}}^2=&\int_0^t2\langle f^\epsilon(s)-f^*(s)-\epsilon f_X^*(s)\hat{X_s},\xi^\epsilon_s\rangle_{\mathcal{X}} ds\\
&+\int_0^t2\langle\xi^\epsilon_s,(\sigma^\epsilon(s)-\sigma^*(s)-\epsilon \sigma_X^*(s)\hat{X_s})dW_s\rangle_{\mathcal{X}}\\
&+\int_0^t||\sigma^\epsilon(s)-\sigma^*(s)-\epsilon \sigma_X^*(s)\hat{X_s}||_{\mathcal{L}^2}^2ds,\\
\mathbb{E}||\xi^\epsilon_t||_{\mathcal{X}}^2=&\mathbb{E}\int_0^t2\langle f^\epsilon(s)-f^*(s)-\epsilon f_X^*(s)\hat{X_s},\xi^\epsilon_s\rangle_{\mathcal{X}} ds\\
&+\mathbb{E}\int_0^t||\sigma^\epsilon(s)-\sigma^*(s)-\epsilon \sigma_X^*(s)\hat{X_s}||_{\mathcal{L}^2}^2ds.
\end{aligned}
$$
Hence from assumptions (H1'), (H3'), (H4) and the Young inequality (Lemma \ref{Young inequality}) , we deduce
$$
\begin{aligned}
\mathbb{E}||\xi^\epsilon_t||_{\mathcal{X}}^2\leq&\mathbb{E}\int_0^t||\xi^\epsilon_s||_{\mathcal{X}}^2
+||f^\epsilon(s)-f^*(s)-\epsilon f_X^*(s)\hat{X_s}||_{\mathcal{L}^2}^2ds\\
&+\int_0^t||\sigma^\epsilon(s)-\sigma^*(s)-\epsilon \sigma_X^*(s)\hat{X_s}||_{\mathcal{L}^2}^2ds\\
\leq&\mathbb{E}\int_0^t||\xi^\epsilon_s||_{\mathcal{X}}^2
+\epsilon_0||\xi^\epsilon_s||_{\mathcal{X}}^2+C_\epsilon\epsilon^2ds\\
\leq&\mathbb{E}\int_0^t(1+\epsilon_0)||\xi^\epsilon_s||_{\mathcal{X}}^2ds+C_\epsilon\epsilon^2T\\
\triangleq& C\mathbb{E}\int_0^t|\xi^\epsilon_s|^2ds+C_\epsilon\epsilon^2,
\end{aligned}
$$
where $\epsilon_0,C,C_\epsilon$ are positive constants, and $C_\epsilon\rightarrow0$ as $\epsilon\rightarrow0$.

According to Gronwall's inequality, we obtain
$$
\mathbb{E}|\xi^\epsilon_t|^2\leq C_\epsilon\epsilon^2e^{Ct}\leq C_\epsilon\epsilon^2e^{CT}\leq C_\epsilon\epsilon^2.
$$
Analogically, we can obtain the result that $\mathbb{E}|\eta^\epsilon_t|^2\leq C_\epsilon\epsilon^2$.
\end{proof}

\textbf{Step 3. Adjoint equations.}

Consider the adjoint equations, which are two backward stochastic differential equations (BSDEs):
\begin{equation}\label{adjoint equations}
\begin{aligned}	
&\left\{  \begin{aligned}
-dp_t &=[f_X^*(t)p_t+\sigma_X^*(t)q_t+L_X^*(t)]dt-q_tdW_t,\\
p_T&=h_X^*(X_T,Y_T), \\ \end{aligned}  \right.\\
&\left\{  \begin{aligned}
-dr_t &=[g_Y^*(t)r_t+\varsigma_Y^*(t)s_t+L_Y^*(t)]dt-s_tdW_t,\\
r_T&=h_Y^*(X_T,Y_T).\\ \end{aligned}  \right.
\end{aligned}
\end{equation}

\begin{proposition}\label{pxry}
Let $(\hat{X_t},\hat{Y_t})$ satisfy (\ref{variational equations}) and $(p_t,q_t,r_t,s_t)$ satisfy (\ref{adjoint equations}). Then
$$\begin{aligned}
\mathbb{E}\langle p_T,\hat{X_T}\rangle&=\mathbb{E}\int_0^T-\langle L_X^*(s),\hat{X_s} \rangle_{\mathcal{X}} ds,\\
\mathbb{E}\langle r_T,\hat{Y_T}\rangle&=\mathbb{E}\int_0^T-\langle L_Y^*(s),\hat{Y_s} \rangle_{\mathcal{X}} ds.
\end{aligned}
$$
\end{proposition}
\begin{proof}
By using It\^{o}'s formula (\ref{Ito formula}), we can obtain
$$\begin{aligned}
d\langle p_t,\hat{X_t}\rangle=&\langle p_t,f_X^*(t)\hat{X_t}dt+\sigma_X^*(t)\hat{X_t}dW_t\rangle_{\mathcal{X}}\\
&-\langle [f_X^*(t)p_t+\sigma_X^*(t)q_t+L_X^*(t)]dt-q_tdW_t,\hat{X_t}\rangle_{\mathcal{X}}\\
&+\langle q_t,\sigma_X^*(t)\hat{X_t} \rangle_{\mathcal{X}} dt,\\
d\langle r_t,\hat{Y_t}\rangle=&\langle r_t,g_Y^*(t)\hat{Y_t}dt+\varsigma_Y^*(t)\hat{Y_t}dW_t\rangle_{\mathcal{X}}\\
&-\langle [g_Y^*(t)r_t+\varsigma_Y^*(t)s_t+L_Y^*(t)]dt-s_tdW_t,\hat{Y_t}\rangle_{\mathcal{X}}\\
&+\langle s_t,\varsigma_Y^*(t)\hat{Y_t} \rangle_{\mathcal{X}} dt.
\end{aligned}
$$
Leveraging the fact that $\hat{X_0}=0$, we obtain
$$\begin{aligned}
\langle p_t,\hat{X_t}\rangle=&\int_0^t\langle p_s,f_X^*(s)\hat{X_s}ds+\sigma_X^*(s)\hat{X_s}dW_s\rangle_{\mathcal{X}}\\
&-\int_0^t\langle [f_X^*(s)p_s+\sigma_X^*(s)q_s+L_X^*(s)]ds-q_sdW_s,\hat{X_s}\rangle_{\mathcal{X}}\\
&+\int_0^t\langle q_s,\sigma_X^*(s)\hat{X_s} \rangle_{\mathcal{X}} ds\\
=&\int_0^t\langle p_s,\sigma_X^*(s)\hat{X_s}dW_s\rangle_{\mathcal{X}}-\int_0^t\langle L_X^*(s)ds-q_sdW_s,\hat{X_s}\rangle_{\mathcal{X}},\\
\langle r_t,\hat{Y_t}\rangle=&\int_0^t\langle r_s,g_Y^*(s)\hat{Y_s}ds+\varsigma_Y^*(s)\hat{Y_s}dW_s\rangle_{\mathcal{X}}\\
&-\int_0^t\langle [g_Y^*(s)r_s+\varsigma_Y^*(s)s_s+L_Y^*(s)]ds-s_sdW_s,\hat{Y_s}\rangle_{\mathcal{X}}\\
&+\int_0^t\langle s_s,\varsigma_Y^*(s)\hat{Y_s} \rangle_{\mathcal{X}} ds\\
=&\int_0^t\langle r_s,\varsigma_Y^*(s)\hat{Y_s}dW_s\rangle_{\mathcal{X}}-\int_0^t\langle L_Y^*(s)ds-s_sdW_s,\hat{Y_s}\rangle_{\mathcal{X}}.
\end{aligned}
$$
Let $t=T$, and then take the expectation. We get the result of the proposition.
\end{proof}

\textbf{Step 4. Stochastic maximum principle.}

Now we can prove the stochastic maximum principle (the necessary conditions for the existence of the minimizer).

By Proposition \ref{variational}, we have
$$\begin{aligned}
\frac{1}{\epsilon}\mathbb{E}\int_0^T(L^\epsilon(t)-L^*(t))dt&\overset{\epsilon\rightarrow0}{\longrightarrow}\mathbb{E}\int_0^T
L^*_X(t)\hat{X_t}+L^*_Y(t)\hat{Y_t}+L^*_K(t)Kdt,\\
\frac{1}{\epsilon}\mathbb{E}[h^\epsilon(\cdot)-h^*(\cdot)]&\overset{\epsilon\rightarrow0}{\longrightarrow}\mathbb{E}
\bigg[h_X^*(X_T,Y_T)\hat{X_T}+h_Y^*(X_T,Y_T)\hat{Y_T}\bigg].
\end{aligned}$$
Hence we obtain the following variational inequality:
\begin{equation}\begin{aligned}
\frac{d}{d\epsilon}J[K^\epsilon(\cdot)]|_{\epsilon=0}=&\mathbb{E}\bigg\{\int_0^TL^*_X(t)\hat{X_t}
+L^*_Y(t)\hat{Y_t}+L^*_K(t)Kdt\\
&+\bigg[h_X^*(X_T,Y_T)\hat{X_T}+h_Y^*(X_T,Y_T)\hat{Y_T}\bigg]\bigg\}\geq0.
\end{aligned}
\end{equation}

According to Proposition \ref{pxry}, $\frac{d}{d\epsilon}J[K^\epsilon(\cdot)]|_{\epsilon=0}=
\mathbb{E}\{\int_0^T\langle L^*_K(t),K\rangle dt\}\geq0$. Defining the generalized Hamiltonian by
$$\begin{aligned}
H&(t,X_t,Y_t,K,p_t,q_t,r_t,s_t)\triangleq\langle p_t,f_t\rangle_{\mathcal{L}^2}+\langle q_t,\sigma_t\rangle_{\mathcal{L}^2}+\langle r_t,g_t\rangle_{\mathcal{L}^2}+\langle s_t,\varsigma_t\rangle_{\mathcal{L}^2}+L(t,X_t,Y_t,K),\\
&(t,X_t,Y_t,K,p_t,q_t,r_t,s_t)\in[0,T]\times\R^n\times\R^n\times U[0,T]\times\R^n\times\R^{n\times d}\times\R^n\times\R^{n\times d},
\end{aligned}$$
we have
$$
\mathbb{E}\{\int_0^T \langle H_K(t,X_t^*,Y_t^*,K^*,p_t,q_t,r_t,s_t),K\rangle_{\mathcal{L}^2} dt\}\geq0.
$$
Therefore,
$$
\mathbb{E}[\mathbf{1}_A \langle H_K(t,X_t^*,Y_t^*,K^*,p_t,q_t,r_t,s_t),K\rangle_{\mathcal{L}^2}]\geq0,
~\forall~A\in\mathcal{F}_t.
$$
This completes the proof.
\end{proof}

\begin{remark}
Further, if we assume that $U[0,T]=\mathcal{L}^2([0,T];\R^n)$, then from (\ref{geq0}), we have that $H_K(t,X_t^*,Y_t^*,K_t^*,p_t,q_t,r_t,s_t)=0$, $\mathbb{P}$-a.s.. Moreover, we assume that $H_{KK}<0$. Then it is well-known that $K^*(\cdot)$ is the unique solution to maximum $H_K(t,X^*,Y^*,K^*,p,q,r,s)$. From the implicit function theorem, we also know that $K^*(\cdot)$ is uniquely represented as the function of $(X^*(\cdot),Y^*(\cdot),p(\cdot),q(\cdot),r(\cdot),s(\cdot))$.
\end{remark}

\subsection{Applications of maximum principle}\label{Applications}
\ \ \ \ In this section, we solve $K^*$ which satisfies the stochastic maximum principle (Theorem \ref{maxsde}). We consider the following equation:
\begin{equation}\label{linear and nonlinear1}
	\begin{aligned}	
	&\left\{  \begin{aligned}
		dX_t &=f(X_t)dt+\sigma(X_t)dW_t,\\
		          X_0&=x_0 ,\\ \end{aligned}  \right.\\
	&\left\{  \begin{aligned}
		dY_t &=g(Y_t)dt+\varsigma(Y_t)dW_t,\\
	             Y_0&=y_0,\\  \end{aligned}  \right.
    \end{aligned}
\end{equation}
where $\sigma(X_t)\sigma^T(X_t),~t\in[0,T]$ and $\varsigma(Y_t)\varsigma^T(Y_t),~t\in[0,T]$ are invertible (the determinant are not zero).
\begin{proposition}\label{HJB-K}
Suppose that $X_t,Y_t\in\mathcal{X}$ are the 
solutions of equation (\ref{linear and nonlinear1}) and (H1)-(H4) hold. If there exists $K^*$ such that $$\mathbb{E}||K^*X_t-Y_t||^2=0,~t\in[0,T],$$
then $K^*$ satisfies the following condition:
$$
\frac{\partial K^*}{\partial X^T}=\bigg[\varsigma(Y)\sigma^T(X)-f(X)(K^*X-Y)^T\bigg]
\bigg[\sigma(X)\sigma^T(X)\bigg]^{-1}.
$$

\end{proposition}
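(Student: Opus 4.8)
The plan is to start from the assumption $\mathbb{E}\|K^*X_t - Y_t\|^2 = 0$ for all $t\in[0,T]$, which (as in the Remark following Definition \ref{conjugation}) forces $K^*X_t - Y_t = \mathbf{0}$ almost surely, and hence also $d(K^*X_t - Y_t) = \mathbf{0}$. Writing $\varphi(t) = K^*X_t - Y_t$ and applying It\^o's formula to the composition $K^*(X_t)$ exactly as in the proof of Theorem \ref{existence-Dissipation}, one gets
\begin{equation*}
d\varphi(t) = \Bigl(\frac{\partial K^*}{\partial X^T}f(X_t) - g(Y_t)\Bigr)dt + \Bigl(\frac{\partial K^*}{\partial X^T}\sigma(X_t) - \varsigma(Y_t)\Bigr)dW_t.
\end{equation*}
Since $\varphi\equiv\mathbf{0}$, both the drift and the diffusion coefficient must vanish identically (the martingale part and the finite-variation part of a semimartingale that is identically zero are separately zero). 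This yields the two relations
\begin{equation*}
\frac{\partial K^*}{\partial X^T}\sigma(X_t) = \varsigma(Y_t), \qquad \frac{\partial K^*}{\partial X^T}f(X_t) = g(Y_t).
\end{equation*}

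Next I would solve the first (diffusion) relation for $\partial K^*/\partial X^T$. Right-multiplying by $\sigma^T(X_t)$ gives $\frac{\partial K^*}{\partial X^T}\sigma(X_t)\sigma^T(X_t) = \varsigma(Y_t)\sigma^T(X_t)$, and since $\sigma(X_t)\sigma^T(X_t)$ is invertible by hypothesis, we obtain $\frac{\partial K^*}{\partial X^T} = \varsigma(Y)\sigma^T(X)\bigl[\sigma(X)\sigma^T(X)\bigr]^{-1}$. To bring in the drift term and match the stated formula, I would use $Y = K^*X - \varphi$ together with $\varphi = \mathbf{0}$, i.e.\ $Y = K^*X$, so that $K^*X - Y = \mathbf{0}$; but the claimed identity carries the extra term $-f(X)(K^*X-Y)^T$, which vanishes on the conjugacy locus and is presumably retained to record the obstruction. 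Concretely, I would add and subtract: starting from $\varsigma(Y)\sigma^T(X) = \frac{\partial K^*}{\partial X^T}\sigma(X)\sigma^T(X)$ and using the drift relation $g(Y) = \frac{\partial K^*}{\partial X^T}f(X)$ to substitute back $f(X)(K^*X-Y)^T = f(X)\cdot\mathbf{0} = \mathbf{0}$, one checks that
\begin{equation*}
\frac{\partial K^*}{\partial X^T} = \bigl[\varsigma(Y)\sigma^T(X) - f(X)(K^*X-Y)^T\bigr]\bigl[\sigma(X)\sigma^T(X)\bigr]^{-1}
\end{equation*}
holds, since the subtracted term contributes nothing.

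The main obstacle — and the point I would want to nail down carefully — is the justification that identically vanishing $\varphi$ forces \emph{both} coefficients to vanish pathwise and pointwise in $t$, rather than merely $dt\times\mathbb{P}$-a.e.; this is where continuity of the coefficients (via (H1), and the continuity of $X_t, Y_t$ as $\mathcal{F}_t$-adapted processes) and the uniqueness of the semimartingale decomposition are needed. A secondary subtlety is the chain rule $d(K^*(X_t)) = \frac{\partial K^*}{\partial X^T}\,dX_t + \tfrac12(\text{second-order term})$: the proof in Theorem \ref{existence-Dissipation} implicitly drops the second-order It\^o correction, so I would either assume $K^*$ is affine in $X$ (consistent with the linear-similarity discussion after Definition \ref{similarity}) or absorb the Hessian term, noting that on the locus $K^*X = Y$ it does not affect the final algebraic identity. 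Once these two points are granted, the remainder is the elementary linear-algebra manipulation above.
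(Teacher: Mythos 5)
Your proof is correct, but it takes a genuinely different route from the paper's. The paper sets up a dynamic-programming problem for $\psi_t^K=KX_t-Y_t$, writes the HJB equation with value function $V(\psi,t)$, substitutes $V_\psi=2\psi^T$, $V_{\psi\psi}=2I$, and extracts from the infimum the single scalar relation $2\langle \frac{\partial K^*}{\partial X^T}f-g,\,K^*X-Y\rangle+\|\frac{\partial K^*}{\partial X^T}\sigma-\varsigma\|^2=0$; it then passes (as a first-order optimality condition in $\frac{\partial K^*}{\partial X^T}$) to the matrix identity $f(K^*X-Y)^T+(\frac{\partial K^*}{\partial X^T}\sigma-\varsigma)\sigma^T=\mathbf{0}$ and inverts $\sigma\sigma^T$. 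You instead use the hypothesis head-on: $\mathbb{E}\|K^*X_t-Y_t\|^2=0$ forces $\varphi\equiv\mathbf{0}$, and the uniqueness of the decomposition of a continuous semimartingale into martingale and finite-variation parts kills the drift and diffusion coefficients \emph{separately}, giving the two equations $\frac{\partial K^*}{\partial X^T}\sigma=\varsigma$ and $\frac{\partial K^*}{\partial X^T}f=g$; the first alone yields the formula after right-multiplying by $\sigma^T$, and the term $-f(X)(K^*X-Y)^T$ is zero on the conjugacy locus. Your argument is more elementary (no HJB machinery, no appeal to Theorem \ref{maxsde}), avoids the paper's somewhat circular step of re-deriving $\mathbb{E}\|K^*X_t-Y_t\|^2=0$ from the HJB relation when it is already the hypothesis, and in fact proves the sharper identity $\frac{\partial K^*}{\partial X^T}=\varsigma(Y)\sigma^T(X)[\sigma(X)\sigma^T(X)]^{-1}$ together with the drift compatibility condition; the paper's HJB route, by contrast, is designed to produce a first-order \emph{optimality} condition that remains meaningful off the conjugacy locus, which is why it retains the $-f(X)(K^*X-Y)^T$ term. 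Both arguments share the same unaddressed gap concerning the second-order It\^{o} correction in $d(K^*(X_t))$ when $K^*$ is not affine; you flag it explicitly, which the paper does not.
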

\begin{proof}
Let $\psi_t^K=KX_t-Y_t$. Then
\begin{equation}\label{linear-nonlinear-Psi}
\left\{\begin{aligned}
d\psi_t^K=&\bigg(\frac{\partial K}{\partial X^T}f(X_t)-g(Y_t)\bigg)dt+\bigg(\frac{\partial K}{\partial X^T}\sigma(X_t)-\varsigma(Y_t)\bigg)dW_t\\
\triangleq&F(\psi_t^K,t)dt+G(\psi_t^K,t)dW_t,\\
\psi_0^K=&\psi_0.
\end{aligned}\right.
\end{equation}
The cost functional corresponding to (\ref{cost functional}) is
$$\begin{aligned}
J[K]&=\mathbb{E}\bigg[\int_0^{T}\frac{1}{T}||KX_t-Y_t||^2dt+||KX_{T}-Y_{T}||\bigg]\\
&\triangleq \mathbb{E}\bigg[\int_0^{T} L(\psi_t^K)dt+h(\psi_{T)}^K)\bigg].
\end{aligned}$$

Define $V(\psi,t)=\underset{K}{\inf}~\mathbb{E}\big[\int_t^{T} L(\psi_s^K)ds+h(\psi_{T}^K)\big],~t\in[0,T]$. Using the Bellman dynamic programming methods, $V(\psi,t)$ satisfies the following Hamilton-Jacobi-Bellman (HJB) equation:
\begin{equation}\label{HJB-d}
\left\{\begin{aligned}
&V_t+\underset{K}{\inf}\bigg\{V_\psi F(\psi,t)+\frac{1}{2}Tr\bigg[G^T(\psi,t)V_{\psi\psi}G(\psi,t)\bigg]
+L(\psi)\bigg\}=0,\\
&V(\psi,T))=h(\psi_{T}^K),
\end{aligned}\right.
\end{equation}
where $V_t:=\frac{\partial V}{\partial t}(\psi,t),~V_\psi:=\frac{\partial V}{\partial \psi}(\psi,t),~V_{\psi\psi}:=\frac{\partial^2 V}{\partial \psi^2}(\psi,t)$.
Let $\psi$ be a solution path, we have $V_\psi=2\psi^T,~V_{\psi\psi}=2I$.

Since the minimizer $K^*$ exists, according to the stochastic maximum principle (Theorem \ref{maxsde}), from (\ref{HJB-d}) we can obtain
$$\begin{aligned}
&V_\psi F(\psi^{K^*},t)+\frac{1}{2}Tr\bigg[G^T(\psi^{K^*},t)V_{\psi\psi}G(\psi^{K^*},t)\bigg]\\
=&V_\psi\big(\frac{\partial K^*}{\partial X^T}f(X)-g(Y)\big)+
\frac{1}{2}\big(\frac{\partial K^*}{\partial X^T}\sigma(X)-\varsigma(Y)\big)^TV_{\psi\psi}\big(\frac{\partial K^*}{\partial X^T}\sigma(X)-\varsigma(Y)\big)\\
=&0.
\end{aligned}$$
Substituting $V_\psi=2\psi^T$ and $V_{\psi\psi}=2I$ into the above equation, we get
\begin{equation}\label{K^* equation}
2\langle \frac{\partial K^*}{\partial X^T}f(X)-g(Y),K^*X-Y\rangle
+||\frac{\partial K^*}{\partial X^T}\sigma(X)-\varsigma(Y)||^2=0.
\end{equation}

Using It\^{o}'s formula (Lemma \ref{Ito formula}) to $||K^*X_t-Y_t||^2$, we have
$$\begin{aligned}
\mathbb{E}||K^*X_t-Y_t||^2=&\mathbb{E}\int_0^t2\langle \frac{\partial K^*}{\partial X^T}f(X_s)-g(Y_s),K^*X_s-Y_s\rangle \\
&+||\frac{\partial K^*}{\partial X^T}\sigma(X)-\varsigma(Y)||^2ds.
\end{aligned}$$
Setting $\psi=\psi_s^{K^*}$ and substituting (\ref{K^* equation}) into the above equation,  we obtain
$$\mathbb{E}||K^*X_t-Y_t||^2=0,~t\in[0,T],$$
i.e., $X(t,x_0 )$ and $Y(t,K^*(x_0))$ are conjugate (Definition \ref{conjugation}) in $[0,T]$.

Since the noise term is non-degenerate, i.e., $\sigma(X_t)\sigma^T(X_t)$ is invertible,
we can solve from (\ref{K^* equation}) that the minimizer $K^*$ satisfies the following equation:
$$f(X)(K^*X-Y)^T+\bigg(\frac{\partial K^*}{\partial X^T}\sigma(X)-\varsigma(Y)\bigg)\sigma^T(X)=\mathbf{0}_{n\times n},$$
i.e.,
\begin{equation}\label{maxsde K^*d}
\frac{\partial K^*}{\partial X^T}=\bigg[\varsigma(Y)\sigma^T(X)-f(X)(K^*X-Y)^T\bigg]
\bigg[\sigma(X)\sigma^T(X)\bigg]^{-1}.
\end{equation}
\end{proof}

To express $K^*$ in (\ref{maxsde K^*d}) more explicitly, we consider the case that $n=d=1$.
Then $K^*$ follows the stochastic maximum principle (Theorem \ref{maxsde}):
\begin{equation}\label{maxsde K^*}
\frac{\partial K^*}{\partial x}=\frac{V_\psi f(x)-V_{\psi\psi}\sigma(x)\varsigma(y)}{-V_{\psi\psi}\sigma^2(x)}
=\frac{\psi f(x)-\sigma(x)\varsigma(y)}{-\sigma^2(x)}.
\end{equation}
Substituting (\ref{maxsde K^*}) back to (\ref{HJB-d}) yields
$$2[f(x)\sigma(x)\varsigma(y)-\sigma^2(x)g(y)]V_{\psi\psi}\cdot V_\psi-f^2(x)V_\psi^2=0,$$
i.e., $V_\psi=2\psi=0$ or
$$2[f(x)\sigma(x)\varsigma(y)-\sigma^2(x)g(y)]V_{\psi\psi}-f^2(x)V_\psi=0.$$
Obviously, the case $V_\psi=0$ can deduce that $V(\psi,t)\equiv0$ has a naturally trivial solution $\psi=0$. We can solve for $V(\psi,t)$ from the other equation and then substitute it back into (\ref{maxsde K^*}) to get $K^*$:
\begin{equation}\label{maxsde K^*2}
\frac{\partial K^*}{\partial x}=\frac{-f(x)\varsigma(y)+2\sigma(x)g(y)}{f(x)\sigma(x)}.
\end{equation}

\section{Applications}\label{Applications}
\ \ \ \ In this section, we provide some examples. They are a steady linear system and its output system, two steady linear systems, and a linear system and a nonlinear system. 

Follow the notations and assumptions in Section \ref{DL}, $\mathcal{X}:=\mathcal{L}^2([0,T]\times\Omega,dt\times\mathbb{P};\R^n)\cap
\mathcal{L}^2([0,T]\times\Omega,dt\times\mathbb{P};\R^{n\times d})$, and $K\in U:=\{K\in \mathcal{L}^2(\mathcal{X};\mathcal{X}):\Gamma_{x_0}\rightarrow\Gamma_{y_0},~K\text{ is homeomorphic}\}$.\\
%
%

Before showing the examples, we first recall Oseledets theorem (Multiplicative ergodic theorem), which provides the theoretical background for computation of Lyapunov exponents.
\begin{lemma}\label{Oseledets theorem}
\textbf{(Oseledets theorem or Multiplicative ergodic theorem, \cite{[D+15]})}
Let $\Phi(t,\omega)$ be a linear random dynamical system (RDS, i.e., a linear cocycle) in $\R^n$, for $t\in[0,+\infty)$ on a probability space $(\Omega,\mathcal{F},\mathbb{P})$, over a measurable driving flow $\theta_t$. Assume that the following integrability conditions are satisfied:
$$\underset{0\leq t\leq1}{\sup}\log^+||\Phi(t,\omega)||\in\mathcal{L}^1(\Omega).$$
Then there exists an invariant set $\tilde{\Omega}\in\mathcal{F}$ (i.e., $\theta_t^{-1}\tilde{\Omega}=\tilde{\Omega}$) of the full probability measure, such that for every $\omega\in\tilde{\Omega}$ the following statements hold:\\
(i) $\underset{t\rightarrow\infty}{\lim}[\Phi(t,\omega)^T\Phi(t,\omega)]^{\frac{1}{2t}}=\bar{\Phi}(\omega)$ exists, and $\bar{\Phi}(\omega)$ is a non-negative $n\times n$ matrix.\\
(ii) The matrix $\bar{\Phi}(\omega)$ has distinct eigenvalues $e^{\lambda_{p(\omega)}(\omega)}<\cdots<e^{\lambda_1(\omega)}$ with corresponding eigenspaces $E_{p(\omega)}(\omega),\cdots,E_1(\omega)$ of dimensions $d_i(\omega)=\dim E_i(\omega),i=1,\cdots,p(\omega)$, and these eigenspaces are such that $\R^n=E_1(\omega)\oplus\cdots\oplus E_{p(\omega)}(\omega)$. Then
$$p(\theta\omega)=p(\omega),\lambda_i(\theta\omega)=\lambda_i(\omega),d_i(\theta\omega)=d_i(\omega).$$
(iii) If $\theta$ is ergodic, i.e., every measurable invariant set of $\theta$ have probability 0 or 1, then the functions $p(\omega)$, $\lambda_i(\omega)$, and $d_i(\omega)$ are constants on $\tilde{\Omega}$.\\
(iv) Each $E_i(\omega)$ is invariant for the linear random dynamical system $\Phi$ in the following sense: $\Phi(t,\omega)E_i(\omega)=E_i(\theta_t\omega)$.\\
(v) $\underset{t\rightarrow\infty}{\lim}\frac{1}{t}\log\frac{||\Phi(t,\omega)x||}{||x||}=\lambda_i(\omega)$ if and only if $x\in E_i(\omega)\backslash\{0\}$, $i=1,2,\cdots,p(\omega)$.\\
(vi) The functions $\omega\mapsto p(\omega)\in\{1,\cdots,d\},\omega\mapsto \lambda_i(\omega)\in\R,\omega\mapsto d_i(\omega)\in\{1,\cdots,d\}$ and $\omega\mapsto E_i(\omega)$ are measurable.
\end{lemma}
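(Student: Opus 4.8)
The plan is to obtain the Multiplicative Ergodic Theorem from Kingman's subadditive ergodic theorem together with elementary multilinear algebra, in the style of Raghunathan and Ruelle. First I would pass from the matrices $[\Phi(t,\omega)^T\Phi(t,\omega)]^{1/(2t)}$ to the singular values $a_1(t,\omega)\ge\cdots\ge a_n(t,\omega)>0$ of $\Phi(t,\omega)$. For each $k$ the $k$-th exterior power $\wedge^k\Phi(t,\omega)$ is again a linear cocycle over $\theta_t$, with $\log\|\wedge^k\Phi(t,\omega)\|=\sum_{j\le k}\log a_j(t,\omega)$. The cocycle identity and submultiplicativity of the operator norm make $t\mapsto\log\|\wedge^k\Phi(t,\omega)\|$ subadditive, and the hypothesis $\sup_{0\le t\le1}\log^+\|\Phi(t,\omega)\|\in\mathcal L^1(\Omega)$ transfers to $\wedge^k\Phi$. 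Kingman's theorem then yields a $\theta$-invariant limit $\gamma_k(\omega)=\lim_{t\to\infty}\tfrac1t\log\|\wedge^k\Phi(t,\omega)\|$ on an invariant set of full measure. The distinct values among $\gamma_1,\ \gamma_2-\gamma_1,\ \dots,\ \gamma_n-\gamma_{n-1}$ are the candidate exponents $\lambda_1>\cdots>\lambda_p$ of item (ii), with $d_i$ the number of indices $j$ realizing $\lambda_i$; $\theta$-invariance of the $\gamma_k$ gives the equivariance of $p,\lambda_i,d_i$ in (ii) and their constancy in (iii) when $\theta$ is ergodic.

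Next I would upgrade convergence of the $\gamma_k$ to convergence of the full singular-value spectrum and of the associated eigenprojections, which is exactly item (i). That $\tfrac1t\log a_k(t,\omega)\to\lambda_{i(k)}$ follows formally from the $\gamma_k$; the delicate point is that the orthogonal projections onto the span of the top $m$ left singular directions of $\Phi(t,\omega)$ converge as $t\to\infty$. This is Raghunathan's lemma: a sequence of positive-definite matrices whose consecutive eigenvalue blocks separate exponentially and which grows sub-exponentially relative to that separation has convergent block eigenprojections. One verifies the hypotheses using the Kingman limits together with a Borel--Cantelli argument along integer times, then interpolates to real $t$ with the integrability bound; the limiting projections assemble into $\bar\Phi(\omega)=\lim_{t\to\infty}[\Phi(t,\omega)^T\Phi(t,\omega)]^{1/(2t)}$, a non-negative matrix with eigenspaces as in (ii).

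The main obstacle is the invariant splitting $\R^n=E_1(\omega)\oplus\cdots\oplus E_p(\omega)$ and the sharp rate in item (v). The orthogonal eigenspaces of $\bar\Phi(\omega)$ are in general not $\Phi$-invariant; what one gets directly is only the decreasing \emph{Oseledets filtration} $V_i(\omega)=\{x:\limsup_{t\to\infty}\tfrac1t\log\|\Phi(t,\omega)x\|\le\lambda_i\}$, which has the right codimensions and satisfies $\Phi(t,\omega)V_i(\omega)=V_i(\theta_t\omega)$. To split it I would exploit that here the driving flow $\theta_t$ (the Wiener shift) is invertible and $\Phi(t,\omega)$ is an invertible fundamental solution, so the same construction applied to the adjoint/backward cocycle produces a second, increasing filtration $\hat V_i(\omega)$; one then sets $E_i(\omega)=V_i(\omega)\cap\hat V_i(\omega)$ and shows $\dim E_i(\omega)=d_i$, $\R^n=\oplus_i E_i(\omega)$, and $\Phi(t,\omega)E_i(\omega)=E_i(\theta_t\omega)$ by a dimension count using that the forward and backward Lyapunov spectra coincide. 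For $0\ne x\in E_i(\omega)$ the two-sided limit $\lim_{t\to\pm\infty}\tfrac1t\log\|\Phi(t,\omega)x\|=\lambda_i$ then holds, the lower bound being supplied by the backward estimate; this gives (iv) and (v). Finally, measurability (vi) follows because $p,\lambda_i,d_i$ arise from pointwise limits of measurable functions and $E_i(\omega)$ from limits of measurable projection-valued maps. The hardest technical points are the quantitative control required in Raghunathan's lemma — subadditive convergence alone provides no rate, so one must extract enough uniformity to make the projection sequences Cauchy — and the matching of the forward and backward spectra needed to split the filtration.
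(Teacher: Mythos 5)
The paper gives no proof of this lemma: it is quoted as a classical result from \cite{[D+15]} (ultimately Arnold's book), so there is no internal argument to compare yours against. Your outline is the standard Kingman--Raghunathan--Ruelle proof and is sound as a roadmap: exterior powers plus subadditivity of $\log\|\wedge^k\Phi(t,\omega)\|$ give the partial sums $\gamma_k$ of the exponents via Kingman, Raghunathan's perturbation lemma upgrades this to convergence of $[\Phi(t,\omega)^T\Phi(t,\omega)]^{1/(2t)}$, and the invariant splitting is obtained by intersecting the forward and backward filtrations. Two substantive remarks. First, the splitting in (ii)/(iv) and the two-sided ``if and only if'' in (v) genuinely require invertibility of $\theta_t$ and of $\Phi(t,\omega)$ together with integrability of $\log^+\|\Phi(\cdot,\omega)^{-1}\|$; none of this is among the stated hypotheses, which assume only one-sided time and one-sided integrability. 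Under the stated hypotheses alone one gets only the decreasing Oseledets filtration $V_1(\omega)\supset\cdots\supset V_p(\omega)$ with $\Phi(t,\omega)V_i(\omega)\subseteq V_i(\theta_t\omega)$, not a direct-sum decomposition, so you are right to import invertibility from the SDE context --- but you should state it as an added hypothesis rather than a fact available for free. Second, the lemma identifies $E_i(\omega)$ simultaneously as the orthogonal eigenspaces of the symmetric matrix $\bar\Phi(\omega)$ and as the $\Phi$-invariant Oseledets spaces; these are in general \emph{different} subspaces (only the associated filtrations agree), and your construction $E_i=V_i\cap\hat V_i$ correctly produces the invariant, generally oblique, version. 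In short, your sketch proves the theorem in its correct form and in doing so exposes two imprecisions in the quoted statement; what remains to make it a complete proof is exactly what you flag, namely the quantitative eigenvalue-separation estimates in Raghunathan's lemma and the interpolation from integer to continuous time.
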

\begin{remark}\label{hyperbolic}
We use $\Omega$ to denote $\tilde{\Omega}$ and assume that $\theta$ is ergodic, then we can write $p(\omega),\lambda_i(\omega),d_i(\omega)$ as $p,\lambda_i,d_i$ respectively.
We call $E_i(\omega)$'s Oseledets spaces corresponding to Lyapunov exponents $\lambda_i$ with multiplicities $d_i$. The decomposition $\R^n=E_1(\omega)\oplus E_2\oplus\cdots\oplus E_p(\omega)$ is called an Oseledets splitting. Moreover, $\{\lambda_1,\cdots,\lambda_p;d_1,\cdots,d_p\}$ is called the Lyapunov spectrum. When all Lyapunov exponents are non-zero, we call the linear stochastic system $\Phi(t,\omega)$ hyperbolic.
\end{remark}

Notice that $\R^n=\R^{d_1}\times\cdots\times\R^{d_p}$. For all $i=1,\cdots,p$, let $\pi_i:\R^n\rightarrow\R^{d_i}$ be the projections and $x_i:= \pi_i\mathbf{x}_i$ for all $\mathbf{x}_i\in E_i$, where $E_i =\{0\}\times\cdots\times\{0\}\times\R^{d_i}\times\{0\}\times\cdots\times\{0\}$ and $\mathbf{x}_i =(0,\cdots,0, x_i, 0, \cdots, 0)$. Then the expression of direct sum can be rewritten as
$$\mathbf{x} = (x_1,\cdots,x_p);~x_1\in\R^{d_1},\cdots, x_p\in\R^{d_p}.$$

\begin{lemma}\label{LL+05,1}
\textbf{(Lemma 2.8, \cite{[LL+05]})}
Let $\Phi$ be the linear RDS given in the Multiplicative Ergodic Theorem (Lemma \ref{Oseledets theorem}). There is an invertible measurable mapping $B:\Omega\rightarrow Gl(n,\mathbb{R})$ such that\\
(i) $\Phi(t,\omega)$ is conjugate to a block-diagonal RDS $\Psi(t,\omega)$ by $B(\omega)$, i.e.,
$$B(\theta^t\omega)\Phi(t,\omega)B^{-1}(\omega)=\Psi(t,\omega)=\text{diag}(\Psi_1(t,\omega),\cdots, \Psi_p(t,\omega)),$$
where $\Psi_i(t,\omega))$ are cocycles on $\R_{d_i}$;\\
(ii) The random transformation $B(\omega)$ preserves the Lyapunov spectrum $\{(\lambda_i
, d_i) | 1\leq i\leq p\}$ and the corresponding Oseledets spaces;\\
(iii) Both $||B(\omega)||$ and $||B^{-1}(\omega)||$ are tempered (Definition \ref{tempered}).
\end{lemma}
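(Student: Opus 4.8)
The plan is to build $B(\omega)$ directly out of the Oseledets splitting furnished by Lemma~\ref{Oseledets theorem}. After deleting a $\theta$-invariant null set I would assume that for every $\omega$ the decomposition $\R^n=E_1(\omega)\oplus\cdots\oplus E_p(\omega)$ holds with constant dimensions $d_i=\dim E_i(\omega)$, that $\omega\mapsto E_i(\omega)$ is measurable (parts (iii), (vi) of Lemma~\ref{Oseledets theorem}), and that $\Phi(t,\omega)E_i(\omega)=E_i(\theta_t\omega)$ (part (iv)). Applying a measurable Gram--Schmidt procedure to a measurable frame of each $E_i(\omega)$, I would choose measurably an orthonormal basis $e_i^1(\omega),\dots,e_i^{d_i}(\omega)$ of $E_i(\omega)$; let $C(\omega)\in Gl(n,\R)$ be the matrix whose columns are these vectors listed block by block, and set $B(\omega):=C(\omega)^{-1}$. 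Then $B$ is measurable and invertible, and $C(\omega)=B(\omega)^{-1}$ carries the $i$-th coordinate block $V_i:=\{0\}\times\cdots\times\R^{d_i}\times\cdots\times\{0\}$ onto $E_i(\omega)$.

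For (i) I would put $\Psi(t,\omega):=B(\theta_t\omega)\,\Phi(t,\omega)\,B(\omega)^{-1}$. One checks $\Psi(0,\omega)=I$ and, by inserting the cocycle relation for $\Phi$, $\Psi(t+s,\omega)=\Psi(t,\theta_s\omega)\Psi(s,\omega)$, so $\Psi$ is a linear cocycle. Block-diagonality is the one place invariance of the splitting enters: $\Psi(t,\omega)V_i=B(\theta_t\omega)\Phi(t,\omega)E_i(\omega)=B(\theta_t\omega)E_i(\theta_t\omega)=V_i$, so each $V_i$ is $\Psi(t,\omega)$-invariant; hence $\Psi=\mathrm{diag}(\Psi_1,\dots,\Psi_p)$ with $\Psi_i(t,\omega):=\Psi(t,\omega)|_{V_i}$, and restricting the cocycle identity to $V_i$ shows each $\Psi_i$ is a cocycle on $\R^{d_i}$.

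The crux is (iii). Since the columns of $C(\omega)$ are unit vectors, $1\le\|C(\omega)\|\le\sqrt n$, so $\|B(\omega)^{-1}\|=\|C(\omega)\|$ is bounded and hence trivially tempered. The delicate term is $\|B(\omega)\|=\|C(\omega)^{-1}\|$: a standard linear-algebra estimate controls $\|C(\omega)^{-1}\|$ by a finite expression in the quantities $(\sin\alpha_i(\omega))^{-1}$, where $\alpha_i(\omega)$ is the angle between $E_i(\omega)$ and $\bigoplus_{j\ne i}E_j(\omega)$. So everything reduces to showing that $\omega\mapsto(\sin\alpha_i(\omega))^{-1}$ is tempered, i.e. $\tfrac{1}{n}\log\sin\alpha_i(\theta^n\omega)\to 0$ as $n\to\pm\infty$, $\mathbb P$-a.s. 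This is the classical ``tempering of the Oseledets splitting'': since $\Phi(n,\omega)$ maps $E_i(\omega)$ onto $E_i(\theta_n\omega)$ with exponent $\lambda_i$ and acts on the complementary sum with the other, distinct, exponents, an exponential collapse of $\alpha_i$ along the orbit would contradict the singular-value asymptotics of $\Phi(n,\omega)$; quantitatively one passes $\log\sin\alpha_i(\theta^n\omega)$ through Kingman's subadditive ergodic theorem (or a Borel--Cantelli estimate), whose time averages vanish because $\mathbb P$ is $\theta$-invariant and the splitting is $\Phi$-invariant, using only the integrability hypothesis already imposed in Lemma~\ref{Oseledets theorem}. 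I expect this temperedness bound to be the main obstacle; the remaining items are bookkeeping.

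Finally, (ii) follows from (iii). Conjugation by $B$ identifies $E_i(\omega)$ with $V_i$, so the multiplicities $d_i$ are preserved. For $0\ne v\in V_i$ write $x=B(\omega)^{-1}v\in E_i(\omega)$; then $\tfrac1t\log\|\Psi(t,\omega)v\|=\tfrac1t\log\|B(\theta_t\omega)\Phi(t,\omega)x\|$, and since $\tfrac1t\log\|B(\theta_t\omega)\|\to 0$ and $\tfrac1t\log\|B(\theta_t\omega)^{-1}\|\to 0$ by (iii), the right-hand side has the same limit as $\tfrac1t\log\|\Phi(t,\omega)x\|$, which is $\lambda_i$ by part (v) of Lemma~\ref{Oseledets theorem}. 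Hence $\Psi$ carries the Lyapunov spectrum $\{(\lambda_i,d_i):1\le i\le p\}$, and $B$ maps the Oseledets spaces of $\Phi$ onto the coordinate blocks $V_i$, which are the Oseledets spaces of $\Psi$.
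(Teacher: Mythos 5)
The paper does not prove this lemma at all: it is quoted verbatim as Lemma 2.8 of \cite{[LL+05]}, so there is no in-paper argument to compare yours against. Your construction is the standard one (and is essentially how the result is proved in \cite{[LL+05]} and in Arnold \cite{[A+98]}): take a measurable orthonormal frame of each Oseledets space $E_i(\omega)$, let $C(\omega)$ be the resulting change of basis, set $B=C^{-1}$, and read off (i) from the cocycle identity together with the invariance $\Phi(t,\omega)E_i(\omega)=E_i(\theta_t\omega)$, and (ii) from (iii). Those parts of your argument are correct, and you correctly identify the only substantive point: the temperedness of $\|C(\omega)^{-1}\|$, equivalently of the reciprocal sines of the angles between the Oseledets spaces.

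That step is, however, only sketched, and as written it is the one genuine gap. The version of the multiplicative ergodic theorem stated in the paper (Lemma \ref{Oseledets theorem}) does not include the conclusion $\lim_{t\to\pm\infty}\tfrac1t\log\sin\angle\bigl(E_i(\theta_t\omega),\bigoplus_{j\neq i}E_j(\theta_t\omega)\bigr)=0$, so you cannot simply invoke it; you must either cite the fuller form of the MET (e.g.\ Arnold \cite{[A+98]}, where the subexponential decay of the angles is part of the theorem) or prove it. Your sketch (``an exponential collapse of $\alpha_i$ would contradict the singular-value asymptotics'') points in the right direction, but the clean route is: show $-\log\sin\alpha_i\in\mathcal L^1(\Omega)$ (this is where the integrability hypothesis on $\Phi$ is really used), and then apply the elementary tempering lemma that $h\in\mathcal L^1$, $h\ge 0$ implies $\tfrac1n h(\theta^n\omega)\to 0$ a.s.\ (a consequence of Birkhoff's theorem), rather than Kingman's theorem, which does not directly apply to $\log\sin\alpha_i$ since that sequence is not subadditive. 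A further small inherited defect: the paper's Lemma \ref{Oseledets theorem} is stated for $t\in[0,+\infty)$ only, while both the splitting invariance in (iv) and the two-sided limit in Definition \ref{tempered} require the two-sided (invertible) MET; your proof silently uses the two-sided version, as does the paper.
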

For simplicity, we still use $\Phi,\Phi_i$ for $\Psi,\Psi_i$. The linear RDS $\Phi(t,\omega)$ is a block-diagonal form according to Lemma \ref{LL+05,1} with
$$\Phi=\left[\begin{matrix}
\Phi_1&0&\cdots&0\\
0&\ddots& &\vdots\\
\vdots& &\ddots&0\\
0&\cdots&0&\Phi_p
\end{matrix}\right],
$$
where $\Phi_i(t,\omega):=\pi_i\circ\Phi(t,\omega)\circ\pi_i^{-1}$ maps $\R^{d_i}$ into itself, $\pi_i^{-1}x_i:=\mathbf{x}_i$ and $\Phi_i(t,\omega)$ satisfies the following lemma.
\begin{lemma}\label{LL+05,2}
\textbf{(Proposition 4.3.3, \cite{[A+98]})}
For each $\epsilon>0$, there is a tempered random variable $M_\epsilon(\omega):\Omega\rightarrow[1,\infty)$ such that\\
(i) $M_\epsilon(\theta^t\omega)\leq M_\epsilon(\omega)e^{\epsilon|t|}$;\\
(ii) $||\Phi_i(t,\omega)||\leq M_\epsilon(\omega)e^{\lambda_it+\epsilon|t|}$.
\end{lemma}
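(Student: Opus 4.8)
The statement is the classical ``subexponential bound'' for a cocycle over an ergodic metric dynamical system (\cite{[A+98]}, Proposition~4.3.3), and the plan is to reconstruct its short proof for the block $\Phi_i$ furnished by Lemma \ref{LL+05,1} (the restriction of $\Phi$ to its $i$-th Oseledets space $E_i\cong\R^{d_i}$). Throughout, $\omega$ ranges over the $\theta$-invariant full-measure set of the multiplicative ergodic theorem (Lemma \ref{Oseledets theorem}), denoted $\Omega$ as in Remark \ref{hyperbolic}.

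\emph{Step 1: reduction to two-sided norm rates.} Since $\Phi_i$ carries the single Lyapunov exponent $\lambda_i$ with multiplicity $d_i$, Lemma \ref{Oseledets theorem}(v), applied in both time directions to the invertible cocycle $\Phi$, gives $\tfrac1t\log\|\Phi_i(t,\omega)x\|\to\lambda_i$ as $t\to\pm\infty$ for every $x\neq0$. Writing $\|\Phi_i(t,\omega)e_1\|\le\|\Phi_i(t,\omega)\|\le\sum_{j=1}^{d_i}\|\Phi_i(t,\omega)e_j\|$ for an orthonormal basis $e_1,\dots,e_{d_i}$ of $\R^{d_i}$ upgrades this to $\tfrac1t\log\|\Phi_i(t,\omega)\|\to\lambda_i$, and since in a single Oseledets block the conorm $m(\Phi_i(t,\omega))=\|\Phi_i(t,\omega)^{-1}\|^{-1}$ carries the same exponent, also $\tfrac1t\log\|\Phi_i(t,\omega)^{-1}\|\to-\lambda_i$ as $t\to\pm\infty$. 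Hence, for each fixed $\epsilon>0$, the quantities
$$C_1(\omega):=\sup_{t\in\R}\|\Phi_i(t,\omega)\|\,e^{-\lambda_i t-\frac{\epsilon}{6}|t|},\qquad C_2(\omega):=\sup_{t\in\R}\|\Phi_i(t,\omega)^{-1}\|\,e^{\lambda_i t-\frac{\epsilon}{6}|t|}$$
are finite $\mathbb{P}$-a.s., their supremands tending to $0$ as $t\to\pm\infty$ and being bounded on compact $t$-sets by a.s.-continuity of $t\mapsto\Phi_i(t,\omega)$.

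\emph{Step 2: construction and properties (i)--(ii).} Define the two-parameter supremum along the driving flow
$$M_\epsilon(\omega):=\sup_{\sigma,\,t\in\R}\|\Phi_i(t,\theta^\sigma\omega)\|\,e^{-\lambda_i t-\frac{\epsilon}{2}|t|-\frac{\epsilon}{2}|\sigma|}.$$
From $\Phi_i(t,\theta^\sigma\omega)=\Phi_i(t+\sigma,\omega)\Phi_i(\sigma,\omega)^{-1}$, the two bounds of Step 1, and $|t+\sigma|\le|t|+|\sigma|$, one gets $\|\Phi_i(t,\theta^\sigma\omega)\|\,e^{-\lambda_i t-\frac{\epsilon}{2}|t|-\frac{\epsilon}{2}|\sigma|}\le C_1(\omega)C_2(\omega)e^{-\frac{\epsilon}{3}|t|-\frac{\epsilon}{6}|\sigma|}\le C_1(\omega)C_2(\omega)$, so $M_\epsilon(\omega)<\infty$ $\mathbb{P}$-a.s.; taking $t=\sigma=0$ gives $M_\epsilon(\omega)\ge\|\Phi_i(0,\omega)\|=1$, and $M_\epsilon$ is $\mathcal{F}$-measurable, so $M_\epsilon:\Omega\to[1,\infty)$. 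Restricting the supremum to $\sigma=0$ yields $\|\Phi_i(t,\omega)\|\le M_\epsilon(\omega)e^{\lambda_i t+\frac{\epsilon}{2}|t|}\le M_\epsilon(\omega)e^{\lambda_i t+\epsilon|t|}$, which is (ii); and substituting $\sigma=\rho-\tau$ in the supremum defining $M_\epsilon(\theta^\tau\omega)$, together with $|\rho-\tau|\ge|\rho|-|\tau|$, gives
$$M_\epsilon(\theta^\tau\omega)=\sup_{\rho,\,t\in\R}\|\Phi_i(t,\theta^\rho\omega)\|\,e^{-\lambda_i t-\frac{\epsilon}{2}|t|-\frac{\epsilon}{2}|\rho-\tau|}\le e^{\frac{\epsilon}{2}|\tau|}M_\epsilon(\omega)\le e^{\epsilon|\tau|}M_\epsilon(\omega),$$
which is (i).

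\emph{Step 3: temperedness.} Property (i) shows $\log^+M_\epsilon(\theta^t\omega)\le\log^+M_\epsilon(\omega)+\epsilon|t|$, i.e.\ $M_\epsilon$ grows at most $\epsilon$-exponentially along orbits; combined with the integrability hypothesis of the multiplicative ergodic theorem, the standard argument of \cite{[A+98]} (Birkhoff's theorem controlling $\tfrac1t\log^+M_\epsilon(\theta^t\omega)$ sublinearly) gives $\lim_{t\to\pm\infty}\tfrac1t\log M_\epsilon(\theta^t\omega)=0$, that is, $M_\epsilon$ is tempered in the sense of Definition \ref{tempered} (recall $M_\epsilon\ge1$, so only the ``from above'' part is at issue). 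I expect the main obstacles to be exactly the two classical inputs drawn from \cite{[A+98]}: the upgrade to two-sided \emph{norm}-level exponents in Step~1 (the uniformity over directions inside one Oseledets block) and the verification of temperedness in Step~3; everything in between --- the supremum construction and the verification of (i)--(ii) --- is purely formal.
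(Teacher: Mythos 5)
The paper offers no proof of this lemma: it is quoted verbatim as Proposition 4.3.3 of Arnold's book \cite{[A+98]} (via \cite{[LL+05]}), so there is no argument of the authors' to compare yours against. Your reconstruction is the standard one and, as far as I can check, correct: the double supremum $M_\epsilon(\omega)=\sup_{\sigma,t}\|\Phi_i(t,\theta^\sigma\omega)\|e^{-\lambda_i t-\frac{\epsilon}{2}|t|-\frac{\epsilon}{2}|\sigma|}$ is exactly the classical construction, the finiteness estimate via the cocycle identity $\Phi_i(t,\theta^\sigma\omega)=\Phi_i(t+\sigma,\omega)\Phi_i(\sigma,\omega)^{-1}$ and the two constants $C_1,C_2$ checks out (I verified the exponent bookkeeping $\frac{\epsilon}{6}|t+\sigma|+\frac{\epsilon}{6}|\sigma|\le\frac{\epsilon}{6}|t|+\frac{\epsilon}{3}|\sigma|$), and properties (i) and (ii) follow formally as you say.

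Two caveats, both of which you already flag but which are worth making explicit since they are the only non-formal content. First, the paper's version of the multiplicative ergodic theorem (Lemma \ref{Oseledets theorem}) is stated only for $t\in[0,+\infty)$ and only at the level of vectors; your Step 1 needs the two-sided, norm-and-conorm version on each Oseledets block (finiteness of $C_2$ requires $\frac1t\log\|\Phi_i(t,\omega)^{-1}\|\to-\lambda_i$ in both time directions), which is a genuinely stronger statement than what the paper records, though standard for invertible linear cocycles. Second, Step 3 as literally written only yields $\limsup_{t\to\pm\infty}\frac{1}{|t|}\log M_\epsilon(\theta^t\omega)\le\epsilon$, not $=0$; to conclude temperedness one must invoke the ergodic $0$-or-$\infty$ dichotomy for $\limsup\frac1t\log^+R(\theta_t\omega)$ (Arnold, Proposition 4.1.3), for which the finite bound $\epsilon$ rules out the $+\infty$ alternative. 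Your appeal to "the standard argument of \cite{[A+98]}" covers this, but the mechanism is the dichotomy, not Birkhoff sublinearity per se.
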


\subsection{A steady linear system and its output system}
\begin{example} Consider
\begin{equation}\label{2linear}
	\begin{aligned}	
	&\left\{  \begin{aligned}
		dX_t &=AX_tdt+BdW_t,\\
		          X_0&=x_0 ,\\ \end{aligned}  \right.\\
	&\left\{  \begin{aligned}
		dY_t &=CX_tdt+DdW_t,\\
	             Y_0&=y_0,\\  \end{aligned}  \right.
    \end{aligned}
\end{equation}
where $A$ and $C$ are nonsingular $n$-order constant matrices, while $B$ and $D$ are constant matrices of $n\times d$ with $CA^{-1}B=D$.
\end{example}

\begin{proposition}
Suppose that $X(t,x_0 )$ and $Y(t,y_0 )$ satisfy system (\ref{2linear}). Then
there is a homeomorphic mapping $K^*=CA^{-1}$ such that $X(t,x_0 )$ and $Y(t,y_0 )$ are completely similar (conjugate). Further, the similarity degree of SDES in (\ref{2linear}) is 1.
\end{proposition}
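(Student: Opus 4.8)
The plan is to guess the conjugating map directly from the algebraic relationship between the two equations and then verify it satisfies the defining identity of conjugacy. Since the $X$-equation has drift $AX_t$ and constant diffusion $B$, while the $Y$-equation has drift $CX_t$ and constant diffusion $D$ subject to the compatibility condition $CA^{-1}B=D$, the natural candidate is the linear map $K^*=CA^{-1}$, which is well defined because $A$ is nonsingular. The intuition is that pushing $X_t$ forward by $K^*$ should convert the $X$-dynamics into the $Y$-dynamics.

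First I would compute the differential of the process $K^*X_t$. By linearity of the It\^{o} integral (equivalently, It\^{o}'s formula, Lemma \ref{Ito formula}, applied to the linear map $x\mapsto K^*x$), one gets $d(K^*X_t)=K^*AX_t\,dt+K^*B\,dW_t$. Substituting $K^*A=CA^{-1}A=C$ and $K^*B=CA^{-1}B=D$ turns this into $d(K^*X_t)=CX_t\,dt+D\,dW_t$, which is exactly the $Y$-equation in (\ref{2linear}). Invoking the conjugacy normalization $y_0=K^*(x_0)=CA^{-1}x_0$ gives $K^*X_0=Y_0$, so $K^*X_t$ and $Y_t$ solve the same SDE with the same initial datum; by pathwise uniqueness (valid under (H1)--(H4), since the linear coefficients are Lipschitz) we conclude $K^*X_t=Y_t$ for all $t\in[0,T]$, $\mathbb{P}$-a.s. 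In particular $\mathbb{E}\|K^*(X(t,x_0))-Y(t,y_0)\|_{\mathcal{X}}^2=0$ for every $t\in[0,T]$. Because $A$ and $C$ are nonsingular, $K^*=CA^{-1}$ is an invertible linear transformation of $\R^n$, hence a homeomorphism, and the identity $Y_t=K^*X_t$ shows it carries the trajectory $\Gamma_{x_0}$ onto $\Gamma_{y_0}$, so $K^*\in U[0,T]$. By Definition \ref{conjugation}, $X(t,x_0)$ and $Y(t,y_0)$ are conjugate, i.e., completely similar.

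For the last assertion, note that $J[K^*]=\mathbb{E}\big[\tfrac{1}{T}\int_0^T\|K^*X_t-Y_t\|_{\mathcal{X}}^2\,dt\big]=0$, hence $\inf_{K\in U[0,T]}J[K]=0$, and by Definition \ref{similarity} any similarity degree function $\rho$ satisfies $\rho(0)=1$, so the similarity degree of (\ref{2linear}) is $\rho(J[K^*])=1$. There is essentially no analytic obstacle in this proposition; the only points that need attention are the implicit use of the normalization $y_0=K^*(x_0)$ in the definition of conjugacy and the verification that $K^*$ genuinely belongs to the admissible set $U[0,T]$ (invertibility as a map of $\R^n$ together with surjectivity $\Gamma_{x_0}\to\Gamma_{y_0}$), both of which follow immediately from the nonsingularity of $A$ and $C$ and from the identity $Y_t=K^*X_t$.
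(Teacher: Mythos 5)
Your proof is correct and rests on the same key observation as the paper's: with $K^*=CA^{-1}$ the relations $K^*A=C$ and $K^*B=D$ annihilate both the drift and diffusion mismatches. The only (cosmetic) difference is that you derive the pathwise identity $K^*X_t=Y_t$ directly from the integral representation, whereas the paper applies It\^{o}'s formula to $\|KX_t-Y_t\|_{\mathcal{X}}^2$ and shows the second moment vanishes; your version is marginally more elementary and yields the slightly stronger almost-sure statement.
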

\begin{proof}
Let $y_0=Kx_0$, then $K(X(t,x_0))-Y(t,K(x_0))$ satisfies the following SDE:
\begin{equation}\label{2linear-Psi}
d(KX_t-Y_t)=(KA-C)X_tdt+(KB-D)dW_t,~0\leq t\leq T.
\end{equation}
Using It\^{o}'s formula (Lemma \ref{Ito formula}), we can obtain
$$\begin{aligned}
d||KX_t-Y_t||_{\mathcal{X}}^2=&2\langle (KA-C)X_t,KX_t-Y_t\rangle_{\mathcal{X}} dt\\
&+2\langle KX_t-Y_t,(KB-D)dW_t \rangle_{\mathcal{X}}\\
&+||KB-D||^2_{\mathcal{L}^2}dt.
\end{aligned}
$$
Notice that $KX_0-Y_0=\mathbf{0}$, $||KX_0-Y_0||_{\mathcal{X}}^2=0$. Then
$$\begin{aligned}
\mathbb{E}||KX_t-Y_t||_{\mathcal{X}}^2=&\mathbb{E}\int_0^t2\langle (KA-C)X_s,KX_s-Y_s\rangle_{\mathcal{X}} ds\\
&+\mathbb{E}\int_0^t||KB-D||^2_{\mathcal{L}^2}ds.
\end{aligned}
$$

Obviously, $K^*=CA^{-1}$ is a homeomorphic mapping. Then
$$\begin{aligned}
\mathbb{E}||K^*X_t-Y_t||_{\mathcal{X}}^2&=\int_0^t||K^*B-D||^2_{\mathcal{L}^2}ds\\
&=\int_0^t||CA^{-1}B-D||^2_{\mathcal{L}^2}ds\\
&=0,~t\in[0,T].
\end{aligned}$$
According to Definition \ref{conjugation}, $X(t,x_0 )$ and $Y(t,y_0 )$ are conjugate (i.e., completely similar). Further, we know that the similarity degree of SDES (\ref{2linear}) is 1 (Remark  \ref{relationship}).
\end{proof}

\subsection{Two steady linear systems}
\begin{example} Consider
\begin{equation}\label{2linears}
	\begin{aligned}	
	&\left\{  \begin{aligned}
		dX_t &=AX_tdt+BdW_t,\\
		          X_0&=x_0 ,\\ \end{aligned}  \right.\\
	&\left\{  \begin{aligned}
		dY_t &=CY_tdt+DdW_t,\\
	             Y_0&=y_0,\\  \end{aligned}  \right.
    \end{aligned}
\end{equation}
where $A$ and $C$ are $n$-order constant matrices, while $B$ and $D$ are constant matrices of $n\times d$.
\end{example}

Let $\tilde{X}(t,\omega),\tilde{Y}(t,\omega)$ be two linear random dynamical systems of (\ref{2linears}) in $\R^n$ on the probability space $(\Omega,\mathcal{F},\mathbb{P})$, and $\tilde{X_t},\tilde{Y_t}$ are the solutions of the determined part (only including drift term):
\begin{equation}\label{tilde_X,tilde_Y}
	\begin{aligned}	
	&\left\{  \begin{aligned}
		d\tilde{X_t} &=A\tilde{X_t}dt,\\
		          \tilde{X_0}&=x_0 ,\\ \end{aligned}  \right.\\
	&\left\{  \begin{aligned}
		d\tilde{Y_t} &=C\tilde{Y_t}dt,\\
	             \tilde{Y_0}&=y_0.\\  \end{aligned}  \right.
    \end{aligned}
\end{equation}
Let $y_0=Kx_0$, then $K(X(t,x_0))-Y(t,K(x_0))$ satisfies the following SDE:
\begin{equation}\label{2linears-Psi}
d(KX_t-Y_t)=(KAX_t-CY_t)dt+(KB-D)dW_t,~0\leq t\leq T.
\end{equation}
Using It\^{o}'s formula (Lemma \ref{Ito formula}), we can obtain
\begin{equation}\label{2linears-Psi2}
\begin{aligned}
\mathbb{E}||KX_t-Y_t||_{\mathcal{X}}^2=&\mathbb{E}\int_0^t2\langle KAX_s-CY_s,KX_s-Y_s\rangle_{\mathcal{X}} ds\\
&+\mathbb{E}\int_0^t||KB-D||^2_{\mathcal{L}^2}ds.
\end{aligned}
\end{equation}

\begin{proposition}
Suppose that $X(t,x_0 )$ and $Y(t,y_0 )$ satisfy system (\ref{2linears}), there is a homeomorphic mapping $K^*$ which satisfies $K^*B=D$ and the diffusion terms are non-degenerate. Then we have:\\
(i) If $A=C$ is nonsingular, 
then $X(t,x_0 )$ and $Y(t,K^*(x_0))$ are completely similar (conjugate); \\
(ii) If $\tilde{X}(t,\omega)$ and $\tilde{Y}(t,\omega)$ have the same Lyapunov exponent, then $X(t,x_0 )$ and $Y(t,K^*(x_0))$ satisfy asymptotic similarity. If $\tilde{X}(t,\omega)$ and $\tilde{Y}(t,\omega)$ do not have the same Lyapunov exponent and at least one of the different Lyapunov exponents is positive, then $X(t,x_0 )$ and $Y(t,K^*(x_0))$ cannot satisfy asymptotic similarity;\\
(iii) If the Ergodicity assumption (HE) holds, then the cost functional (\ref{conjugation functional}) of SDEs (\ref{2linears}) has the minimum value $J[K^*(\cdot)]=\underset{K(\cdot)\in U[0,T]}{\inf}J[K(\cdot)]$, and the similarity degree $\rho(J[K^*(\cdot)])$ satisfies $\underset{T\rightarrow+\infty}{\lim}\rho(J[K^*])=1$.
\end{proposition}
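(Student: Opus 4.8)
The three parts of the proposition share one starting point: the It\^{o}-formula identity \eqref{2linears-Psi2} evaluated at $K=K^*$. Because $K^*B=D$, the term $\mathbb{E}\int_0^t\|K^*B-D\|_{\mathcal{L}^2}^2\,ds$ vanishes identically, so, writing $\phi_t:=K^*X_t-Y_t$ (recall $y_0=K^*x_0$), the process $\phi_t$ solves the linear SDE $d\phi_t=(K^*AX_t-CY_t)\,dt$ with zero diffusion coefficient $K^*B-D$ and $\phi_0=\mathbf{0}$. I would exploit this in two equivalent ways. On the one hand, $\tfrac{d}{dt}\mathbb{E}\|\phi_t\|_{\mathcal{X}}^2=2\,\mathbb{E}\langle K^*AX_t-CY_t,\phi_t\rangle_{\mathcal{X}}$. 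On the other, the explicit solution $\phi_t=(K^*e^{At}-e^{Ct}K^*)x_0+\int_0^t(K^*e^{A(t-s)}B-e^{C(t-s)}D)\,dW_s$ combined with It\^{o}'s isometry yields the decomposition $\mathbb{E}\|\phi_t\|_{\mathcal{X}}^2=\|(K^*e^{At}-e^{Ct}K^*)x_0\|^2+\int_0^t\|K^*e^{Au}B-e^{Cu}D\|_{\mathcal{L}^2}^2\,du$, which cleanly separates a deterministic part governed by the flows $\tilde X,\tilde Y$ of \eqref{tilde_X,tilde_Y} from an accumulated stochastic part.

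For (i), with $A=C$ nonsingular I would split $K^*AX_t-AY_t=A\phi_t+(K^*A-AK^*)X_t$. Since the two deterministic parts in \eqref{tilde_X,tilde_Y} now coincide, the homeomorphism $K^*$ realizing their conjugacy is a matrix commuting with $A$, so the commutator term disappears and $\tfrac{d}{dt}\mathbb{E}\|\phi_t\|_{\mathcal{X}}^2\le 2\|A\|\,\mathbb{E}\|\phi_t\|_{\mathcal{X}}^2$ with $\mathbb{E}\|\phi_0\|_{\mathcal{X}}^2=0$; Gronwall's inequality then forces $\mathbb{E}\|K^*X_t-Y_t\|_{\mathcal{X}}^2\equiv0$ on $[0,T]$, i.e.\ complete similarity (Definition \ref{conjugation}), whence similarity degree $1$ by Remark \ref{relationship}. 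Equivalently, $K^*X_t$ then solves the same linear SDE as $Y_t$ with the same initial value, so they coincide by pathwise uniqueness.

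For (iii), under (HE) Theorem \ref{existence} already supplies $K^*\in U[0,T]$ attaining $\inf_{K(\cdot)\in U[0,T]}J[K(\cdot)]$ with $\lim_{T\rightarrow\infty}J[K^*]$ equal to a constant; the task is to identify that constant as $0$. Ergodicity of the linear SDEs forces $A$ and $C$ to be Hurwitz, so $\|e^{At}\|,\|e^{Ct}\|\to0$ exponentially; inserting this into the decomposition above, the deterministic term tends to $0$ and the integrand $\|K^*e^{Au}B-e^{Cu}D\|_{\mathcal{L}^2}^2$ tends to $0$, so $\mathbb{E}\|K^*X_t-Y_t\|_{\mathcal{X}}^2$ converges; using $K^*B=D$ and the non-degeneracy I would argue the limit is in fact $0$, so $J[K^*]=\tfrac1T\int_0^T\mathbb{E}\|K^*X_t-Y_t\|_{\mathcal{X}}^2\,dt\to0$. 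Since $\rho$ is continuous, decreasing and $\rho(0^{+})=1$ (Definition \ref{similarity}), this gives $\lim_{T\rightarrow\infty}\rho(J[K^*])=1$.

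Part (ii) is the substance, and it is entirely about the long-time competition between the two terms in the decomposition of $\mathbb{E}\|\phi_t\|_{\mathcal{X}}^2$. Applying Oseledets' theorem (Lemma \ref{Oseledets theorem}) to the constant cocycles $e^{At},e^{Ct}$ and the block reductions of Lemmas \ref{LL+05,1} and \ref{LL+05,2}, I would pass to the Oseledets splitting $\R^n=\bigoplus_i E_i$, on each block of which $\|e^{Au}\|$ and $\|e^{Cu}\|$ behave like $e^{\lambda_i u}$ up to tempered factors. If $\tilde X$ and $\tilde Y$ share the same Lyapunov exponent, the growth rates in the deterministic term and in the stochastic integrand match and stay controlled, so $\mathbb{E}\|\phi_t\|_{\mathcal{X}}\to0$, i.e.\ asymptotic similarity (Definition \ref{Asymptotic similarity}); if the exponents differ with one positive, on the offending block one contribution grows like $e^{2\lambda t}$ with $\lambda>0$ while the other grows at a strictly different rate, so they cannot cancel and $\mathbb{E}\|\phi_t\|_{\mathcal{X}}\not\to0$. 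The hard part will be precisely this non-cancellation: ruling out accidental cancellation between the deterministic and stochastic contributions requires a careful block-by-block comparison of exponential orders, careful handling of the sub-exponential tempered factors from Lemma \ref{LL+05,2}, and use of the non-degeneracy of the diffusion to keep the stochastic term genuinely of order $e^{2\lambda t}$; the analogous delicate point in (iii) is confirming that the limiting cost is exactly $0$ rather than merely a positive constant.
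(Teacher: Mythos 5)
Part (i) of your proposal is essentially the paper's argument (It\^{o}'s formula plus Gronwall applied to $\|K^*X_t-Y_t\|^2$ with the diffusion term killed by $K^*B=D$), and you are actually more careful than the paper in flagging that the drift term only collapses to $A(K^*X_t-Y_t)$ if $K^*$ commutes with $A$ --- an assumption the paper uses silently. The problem is in (ii) and (iii), where your own (correct) It\^{o}-isometry decomposition
$$\mathbb{E}\|K^*X_t-Y_t\|^2=\|(K^*e^{At}-e^{Ct}K^*)x_0\|^2+\int_0^t\|K^*e^{Au}B-e^{Cu}D\|_{\mathcal{L}^2}^2\,du$$
exposes a term that you never actually control. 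The second integral is nondecreasing in $t$, and when the exponents are negative it converges to $\int_0^\infty\|K^*e^{Au}B-e^{Cu}D\|^2\,du$, which is strictly positive unless $K^*e^{Au}B\equiv e^{Cu}D$ for all $u$. Neither $K^*B=D$ (which only kills the integrand at $u=0$) nor equality of the Lyapunov exponents of $\tilde X$ and $\tilde Y$ gives you that identity. So the key claims --- ``the growth rates match and stay controlled, so $\mathbb{E}\|\phi_t\|\to0$'' in (ii), and ``using $K^*B=D$ and non-degeneracy I would argue the limit is in fact $0$'' in (iii) --- are exactly the points where the proof is missing, and within your framework they appear to be false without further hypotheses. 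You correctly identify this as ``the hard part,'' but identifying it is not the same as closing it.

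For comparison, the paper takes a different (and much cruder) route precisely to avoid this term: in (ii) it replaces $\mathbb{E}\|K^*X_T-Y_T\|_{\mathcal{X}}$ outright by the deterministic quantity $\|K^*\tilde X(T,x_0)-\tilde Y(T,K^*x_0)\|_{\R^n}$ built from the drift-only flows \eqref{tilde_X,tilde_Y}, and then argues purely with Oseledets data (Lemmas \ref{Oseledets theorem}--\ref{LL+05,2}), exploiting homogeneity of $K^*$ so that $K^*(x_0e^{\lambda_iT})-K^*(x_0)e^{\lambda_iT}=0$ on each Oseledets block; in (iii) it does not estimate the integral at all but invokes Theorem \ref{existence-Ergodicity} (Birkhoff) and evaluates $\int\phi\,d\mu$ at $K^*x_0-K^*x_0=0$. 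Whether one finds the paper's reduction to the deterministic flow convincing is a separate matter, but your honest decomposition cannot reproduce their conclusion without an additional argument showing the stochastic contribution vanishes, and no such argument is supplied. To repair (ii) and (iii) along your lines you would need either to assume the stronger intertwining $K^*e^{Au}B=e^{Cu}D$ (equivalently $K^*A=CK^*$ together with $K^*B=D$), or to adopt the paper's convention of measuring similarity only through the deterministic parts.
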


\begin{proof}

(i) 
Let $A = C$ be nonsingular. According to Lemma \ref{LL+05,2} and $K^*B = D$,
$$||\widetilde{(KX-Y)}(t,\omega)||=||e^{At}||\leq M_\epsilon(\omega)e^{(\lambda_1+\epsilon)t},~\forall~t\geq0.$$
Then applying It\^{o}'s formula to $e^{ct}||K^*X_t-Y_t||^2_{\mathcal{X}}$, we have
$$\begin{aligned}
\mathbb{E}e^{ct}||K^*X_t-Y_t||_{\mathcal{X}}^2=&\mathbb{E}\int_0^tce^{cs}||K^*X_s-Y_s||_{\mathcal{X}}^2ds\\
&+\mathbb{E}\int_0^te^{cs}2\langle A(K^*X_s-Y_s),K^*X_s-Y_s\rangle_{\mathcal{X}} ds\\
\leq&\mathbb{E}\int_0^tce^{cs}||K^*X_s-Y_s||_{\mathcal{X}}^2ds\\
&+\mathbb{E}\int_0^te^{cs}2|| A(K^*X_s-Y_s)||_{\mathcal{X}}\cdot||K^*X_s-Y_s||_{\mathcal{X}} ds\\
\leq&\mathbb{E}\int_0^tce^{cs}||K^*X_s-Y_s||_{\mathcal{X}}^2ds\\
&+\mathbb{E}\int_0^te^{cs} 2(\frac{1}{s}\log(M_\epsilon(\omega))+\lambda_1+\epsilon)||K^*X_s-Y_s||_{\mathcal{X}}^2ds\\
=&\int_0^t(c+2(\frac{1}{s}\log(M_\epsilon(\omega))+\lambda_1+\epsilon))\mathbb{E}e^{cs}||K^*X_s-Y_s||^2_{\mathcal{X}} ds,~t\in[0,T],
\end{aligned}$$
where $c>-2(\lambda_1+\epsilon),~M_\epsilon(\omega):\Omega\rightarrow[1,\infty)$ is tempered which is defined in Lemma \ref{LL+05,2} and $\lambda_1$ is the largest Lyapunov exponent which is defined in Lemma \ref{Oseledets theorem}.
Apply the Gronwall lemma (Lemma \ref{Gronwall lemma}), where $\beta(s)=c+2(\frac{1}{s}\log(M_\epsilon(\omega))+\lambda_1+\epsilon)$ is non-negative and $\alpha(t)=0$. Then $$0\leq\mathbb{E}e^{ct}||K^*X_t-Y_t||_{\mathcal{X}}^2\leq\alpha(t)\exp\bigg(\int_0^t\beta(s)ds\bigg)=0.$$
Hence, the equation above has only zero solution:
$$\mathbb{E}||K^*X_t-Y_t||_{\mathcal{X}}^2=0,~t\in[0,T],$$
i.e., $X(t,x_0 )$ and $Y(t,K^*(x_0))$ are conjugate (i.e., completely similar).\\

(ii) If $\tilde{X}(t,\omega)$ and $\tilde{Y}(t,\omega)$ have the same Lyapunov exponent, according to Oseledets theorem (Lemma \ref{Oseledets theorem}), for $x\in E_i(\omega)\backslash\{0\},i=1,2,\cdots,p,$
$$\underset{T\rightarrow\infty}{\lim}\frac{1}{T}\log\frac{||\tilde{X}(T,\omega)x||}{||x||}=\lambda_i
=\underset{T\rightarrow\infty}{\lim}\frac{1}{T}\log\frac{||\tilde{Y}(T,\omega)x||}{||x||}.$$
Then
$$\begin{aligned}
\underset{T\rightarrow\infty}{\lim}\mathbb{E}||K^*(X(T,x_0 ))-Y(T,K^*(x_0))||_{\mathcal{X}}&=
\underset{T\rightarrow\infty}{\lim}||K^*(\tilde{X}(T,x_0 ))-\tilde{Y}(T,K^*(x_0))||_{\R^n}\\
&\leq\underset{T\rightarrow\infty}{\lim}\underset{i=1}{\overset{p}{\sum}}||K^*(\tilde{X}(T,x_0 ))-\tilde{Y}(T,K^*(x_0))||_{E_i(\omega)}\\
&=\underset{T\rightarrow\infty}{\lim}\underset{i=1}{\overset{p}{\sum}}||K^*(x_0e^{\lambda_iT})-K^*(x_0)e^{\lambda_iT}||_{E_i(\omega)}\\
&=0,
\end{aligned}$$
i.e., $X(t,x_0 )$ and $Y(t,K^*(x_0))$ satisfy asymptotic similarity.\\

On the other hand, without loss of generality, according to Oseledets theorem (Theorem \ref{Oseledets theorem}), let
$$\begin{aligned}
\underset{T\rightarrow\infty}{\lim}\frac{1}{T}\log\frac{||\tilde{X}(T,\omega)x||}{||x||}&=\lambda_1,\\
\underset{T\rightarrow\infty}{\lim}\frac{1}{T}\log\frac{||\tilde{Y}(T,\omega)x||}{||x||}&=\lambda_2,
\end{aligned}$$
where $x\in E_1(\omega)\backslash\{0\}$ and $\lambda_1\neq\lambda_2,~\lambda_1>0$. Then
$$\begin{aligned}
\underset{T\rightarrow\infty}{\lim}\mathbb{E}||K^*(X(T,x_0 ))-Y(T,K^*(x_0))||_{\mathcal{X}}&=
\underset{T\rightarrow\infty}{\lim}||K^*(\tilde{X}(T,x_0 ))-\tilde{Y}(T,K^*(x_0))||_{\R^n}\\
&\geq\underset{T\rightarrow\infty}{\lim}||K^*(\tilde{X}(T,x_0 ))-\tilde{Y}(T,K^*(x_0))||_{E_1(\omega)}\\
&=\underset{T\rightarrow\infty}{\lim}||K^*(x_0e^{\lambda_1T})-K^*(x_0)e^{\lambda_2T}||_{E_1(\omega)}\\
&>0,
\end{aligned}$$
i.e., $X(t,x_0 )$ and $Y(t,K^*(x_0))$ cannot satisfy asymptotic similarity.\\

(iii) Obviously, the Ergodicity assumption (HE) holds when the diffusion term is nondegenerate, i.e., $KX_t-Y_t$ defined in (\ref{2linears-Psi}) is ergodic.
Then according to Theorem \ref{existence-Ergodicity},
$$\begin{aligned}
\underset{T\rightarrow+\infty}{\lim}J[K^*]&=\underset{T\rightarrow\infty}{\lim}\bigg[\dfrac{1}{T} \int_{0}^{T}\mathbb{E}||K^*(X(t,x_0))-Y(t,K^*(x_0))||_{\mathcal{X}}^2 dt\bigg]\\
&=\underset{T\rightarrow+\infty}{\lim}\frac{1}{T}\int_0^T\phi(X_t,Y_t))dt\\
&=\underset{m\rightarrow\infty}{\lim}\frac{1}{m}\underset{i=0}{\overset{m-1}{\sum}}\phi(\Psi^i(x,y))\\
&=\int\phi(K^*x_0-K^*x_0) d\mu\\
&=0,~\mu-a.e..
\end{aligned}$$
where $\Psi^i$ is defined in (\ref{tau_i}).
Hence, the cost functional (\ref{conjugation functional}) has the minimum value $J[K^*(\cdot)]=\underset{K(\cdot)\in U[0,T]}{\inf}J[K(\cdot)]$, and $K^*$ is the minimizer. We can define $\rho(J[K^*])$ (Definition \ref{similarity}) to describe the similarity between two SDEs (\ref{2linears}), and $\underset{T\rightarrow+\infty}{\lim}\rho(J[K^*])=1$.\\
\end{proof}

\subsection{A nonlinear system and its linearization system}
\begin{example} Consider a nonlinear SDE and a linearized SDE:
\begin{equation}\label{linear and nonlinear}
	\begin{aligned}	
	&\left\{  \begin{aligned}
		dX_t &=f_0(X_t)dt+\underset{l=1}{\overset{d}{\sum}}f_l(X_t)dW_t^l,\\
&\triangleq f_0(X_t)dt+\sigma(X_t) dW_t,\\
		          X_0&=x_0 ,\\ \end{aligned}  \right.\\
	&\left\{  \begin{aligned}
		dY_t &=A_0Y_tdt+\underset{l=1}{\overset{d}{\sum}}A_lY_tdW_t^l,\\
	             Y_0&=y_0,\\  \end{aligned}  \right.
    \end{aligned}
\end{equation}
where $\sigma(X_t)=(f_1,\cdots,f_d)$ is a matrix of $n\times d$, $W_t^l(l=1,\cdots,d)$ is a 1-dimensional standard Brownian motion, $A_0=\frac{\partial}{\partial X}f_0(\mathbf{0})$, $A_l=\frac{\partial}{\partial X}f_l(\mathbf{0})$, and $\mathbf{0}$ is a fixed point for the vector fields $f_0,f_1,\cdots,f_d$.
Define the function space $C_b^2$ as the set of functions $f:\R^n\rightarrow\R^n$, where all derivatives up to order $2$ exist and are bounded.
Suppose that $f_0,f_1,\cdots,f_d\in C_b^2$
and $\sigma(X_t) dW_t$ is non-degenerate noise (i.e., $\sigma(X_t)\sigma^T(X_t)=\underset{l=1}{\overset{d}{\sum}}f_l(X_t)f_l^T(X_t)$ is invertible).
\end{example}

Let $\Psi$ and $\Phi$ be the cocycle generated by the nonlinear and linearized SDE of (\ref{linear and nonlinear}), respectively. 
Write the time-one mapping $\Psi(1,\omega,x)$ as $\varphi(\omega,x)$ and $\Phi(1,\omega,y)$ as $A(\omega)y$.
Suppose that $\varphi(\omega,x)$ has a fixed point $x=\mathbf{0}$ for all $\omega\in\Omega$, and is locally tempered $C^1$ random diffeomorphism, that is, there is a tempered ball $B_{\delta(\omega)}=\{x|~||x||<\delta(\omega)\}$, where $\delta(\omega)$ is tempered from below (Definition \ref{tempered} (iii)), such that
$$\underset{x\in B_{\delta(\omega)}}{\sup}||D^i_x\varphi(\omega,x)||=C_i(\omega),$$
where $C_i(\omega),i=0,1$ is tempered from above (Definition \ref{tempered} (ii)).

The cocycle $\Phi(t,\omega)$ generated by the linearized SDE is
$$\Phi(t,\omega)=e^{(A_0-\frac{1}{2}\underset{l=1}{\overset{d}{\sum}}A_l^2)t
+\underset{l=1}{\overset{d}{\sum}}A_lW_t^l(\omega)}.$$
Suppose that $A(\omega):=D\varphi(\omega,0)$ satisfies the conditions of the Oseledets theorem (Lemma \ref{Oseledets theorem}) with the Lyapunov exponents $\lambda_p<\cdots<\lambda_1<0$ (or $0<\lambda_p<\cdots<\lambda_1$), then $||\Phi(t,\omega)||\leq M_\epsilon(\omega)e^{(\lambda_1+\epsilon)t},~\forall~t\geq0$ (Lemma \ref{LL+05,2}), where $M_\epsilon(\omega):\Omega\rightarrow[1,\infty)$ is tempered.
We only need to discuss the case that $\lambda_p<\cdots<\lambda_1<0$, the other case can be dealt with by considering the inverse of the system (i.e., let $t=-t$).

The similarity (conjugacy) between two SDEs in (\ref{linear and nonlinear}) is reflected by the well-known stochastic Hartman-Grobman theorem, due to (\cite{[IL+02],[LZZ+20],[HP+16],[ZLZ+17]}):
\begin{lemma}\label{stochastic Hartman-Grobman theorem}
\textbf{(A stochastic Hartman-Grobman theorem)}
Let $\Psi$ and $\Phi$ be the cocycle generated by the nonlinear and linearized SDE of (\ref{linear and nonlinear}), respectively.
Suppose that $\varphi:\Omega\times B_{\delta(\omega)}\rightarrow\R^n$ is a tempered $C^1$ random diffeomorphism such that $\varphi(\omega,\mathbf{0})=\mathbf{0}$.
Suppose that $A(\omega)$ satisfies the conditions of the Oseledets theorem (Lemma \ref{Oseledets theorem}) with the Lyapunov exponents $\lambda_p<\cdots<\lambda_1<0$.
Then there exists a mapping $H:\Omega\times\R^n\rightarrow\R^n$ such that\\
(i) $H(\omega,\cdot):\R^n\rightarrow\R^n$ is a homeomorphism of $\R^n$, and $H(\omega,0)=0,~\omega\in\Omega$;\\
(ii) the following topological equivalence relation
$$H(\theta_t\omega,\cdot)\circ\Psi_t(\omega)=\Phi_t(\omega)\circ H(\omega,x)$$
holds in a random time interval $t\in[0,\tau_+(\omega,x)]$ for $\omega\in\Omega$.
\end{lemma}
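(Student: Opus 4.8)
The plan is to reduce the assertion to a conjugacy problem for the time-one maps and then solve the resulting cohomological equation by a contraction argument in a space of tempered, bounded random maps, adapting the classical Pugh-type proof of the Hartman--Grobman theorem to the random-dynamical-systems framework supplied by Lemmas \ref{Oseledets theorem}, \ref{LL+05,1} and \ref{LL+05,2}. First I would write the time-one map of the nonlinear cocycle as $\varphi(\omega,x)=A(\omega)x+g(\omega,x)$, where $A(\omega)=D\varphi(\omega,\mathbf{0})$ generates $\Phi$ and $g(\omega,\cdot)$ is the remainder, so that $g(\omega,\mathbf{0})=\mathbf{0}$ and $D_xg(\omega,\mathbf{0})=\mathbf{0}$. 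Since $\varphi$ is a tempered $C^1$ random diffeomorphism on $B_{\delta(\omega)}$ with $\sup_{x\in B_{\delta(\omega)}}\|D_x^i\varphi(\omega,x)\|=C_i(\omega)$ tempered from above, a standard cut-off produces a modified $\varphi$ that agrees with the original on a smaller tempered ball and for which $g(\omega,\cdot)$ is globally Lipschitz with an arbitrarily small tempered constant $\ell(\omega)$; this changes the dynamics only outside the small ball and therefore does not affect the \emph{local} conjugacy we are after. Looking for $H$ in the form $H(\omega,x)=x+h(\omega,x)$, the relation $H(\theta_1\omega,\cdot)\circ\varphi(\omega,\cdot)=A(\omega)\circ H(\omega,\cdot)$ is equivalent to
\begin{equation*}
h(\theta_1\omega,\varphi(\omega,x))=A(\omega)\,h(\omega,x)-g(\omega,x).
\end{equation*}

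\textbf{Solving the cohomological equation.} By Lemma \ref{LL+05,1} we may assume $\Phi$ is block-diagonal, and by Lemma \ref{LL+05,2}, since $\lambda_p<\cdots<\lambda_1<0$, there is a tempered adapted norm $\|\cdot\|_\omega$ in which $A(\omega)$ contracts at a uniform rate $e^{\lambda_1+\epsilon}<1$. Using invertibility of $\varphi(\omega,\cdot)$, I would rewrite the displayed equation as the fixed-point problem $h=\mathcal{T}h$ with
\begin{equation*}
(\mathcal{T}h)(\omega,x)=A(\theta_{-1}\omega)\,h\!\big(\theta_{-1}\omega,\varphi(\theta_{-1}\omega,\cdot)^{-1}(x)\big)-g\!\big(\theta_{-1}\omega,\varphi(\theta_{-1}\omega,\cdot)^{-1}(x)\big),
\end{equation*}
on the Banach space of jointly measurable bounded continuous maps $h:\Omega\times\R^n\to\R^n$ equipped with a weighted supremum norm $\|h\|_w=\sup_\omega\sup_x\|h(\omega,x)\|_\omega/w(\omega)$, the tempered weight $w(\omega)$ being chosen to absorb the random constants $M_\epsilon(\omega)$, $C_i(\omega)$, $\ell(\omega)$ along the orbit $\{\theta_n\omega\}$. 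In this norm $\mathcal{T}$ is a contraction (its contraction factor being controlled by $\|A\|_\omega<1$ together with the smallness of $\ell$), so it has a unique fixed point $h$; joint measurability of $h$ follows from that of the successive approximations $\mathcal{T}^n 0$, and $h(\omega,\mathbf{0})=\mathbf{0}$. This yields a tempered continuous $H(\omega,\cdot)$ with $H(\omega,\mathbf{0})=\mathbf{0}$ that conjugates the time-one maps.

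\textbf{Homeomorphism and the flow.} To prove part (i) I would run the same contraction for the reversed equation, with $\Phi(1,\omega)^{-1}$ in place of $A(\omega)$, obtaining a tempered continuous $\widetilde H(\omega,x)=x+\widetilde h(\omega,x)$; the compositions $\widetilde H(\omega,\cdot)\circ H(\omega,\cdot)$ and $H(\omega,\cdot)\circ\widetilde H(\omega,\cdot)$ are bounded near-identity maps solving a conjugacy fixed-point equation whose only bounded solution is $\mathrm{Id}$, so $H(\omega,\cdot)$ is a homeomorphism with inverse $\widetilde H(\omega,\cdot)$. For part (ii) I would promote the time-one conjugacy to a conjugacy along the flow by the standard averaging device $\bar H(\omega,x):=\int_0^1\Phi(s,\omega)^{-1}\,H\!\big(\theta_s\omega,\Psi(s,\omega,x)\big)\,ds$; a direct computation with the cocycle identities and the time-one relation shows that $\bar H(\theta_t\omega,\cdot)\circ\Psi_t(\omega)=\Phi_t(\omega)\circ\bar H(\omega,\cdot)$ and that $\bar H$ is again a tempered near-identity homeomorphism fixing $\mathbf{0}$. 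Since the cut-off left $\varphi$ genuinely unchanged only on the tempered ball, this last identity holds as long as the orbit of $x$ stays in that ball; denoting by $\tau_+(\omega,x)$ the corresponding exit time, which is almost surely positive by temperedness, we obtain the equivalence on $t\in[0,\tau_+(\omega,x)]$, which is exactly (ii).

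\textbf{Main obstacle.} The crux is not the formal fixed-point scheme but the uniform control of the \emph{random, orbit-dependent} constants: one has to choose the adapted Lyapunov norm $\|\cdot\|_\omega$ and the tempered weight $w(\omega)$ so precisely that $\mathcal{T}$ becomes a genuine contraction with a \emph{deterministic} rate even though $M_\epsilon(\theta_n\omega)$ and $C_i(\theta_n\omega)$ grow subexponentially along the orbit, while simultaneously keeping the cut-off radius tempered from below and preserving joint measurability of every object built. Dealing with the random time interval $[0,\tau_+(\omega,x)]$ in (ii) --- rather than a global conjugacy --- is the unavoidable price of this localization.
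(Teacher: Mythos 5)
The paper does not prove Lemma \ref{stochastic Hartman-Grobman theorem} at all: it is quoted as a known result and attributed to \cite{[IL+02],[LZZ+20]} (see the sentence immediately following the statement), and the only conjugacy the paper actually constructs is the closely related Theorem \ref{stochastic Hartman-Grobman theorem1}. Your argument is therefore a genuinely different route from anything in the paper. You follow the classical discrete-time Hartman--Grobman scheme adapted to RDS: write the time-one map as $A(\omega)x+g(\omega,x)$, cut off $g$ to a small tempered Lipschitz perturbation, solve the cohomological equation $h(\theta_1\omega,\varphi(\omega,x))=A(\omega)h(\omega,x)-g(\omega,x)$ by a fixed point in a tempered weighted sup-norm (correctly exploiting that all exponents satisfy $\lambda_i<0$, so no stable/unstable splitting is needed and $\mathcal{T}$ contracts at rate $e^{\lambda_1+\epsilon}<1$ in the adapted Lyapunov norm), and then promote the time-one conjugacy to the flow by the averaging integral $\int_0^1\Phi(s,\omega)^{-1}H(\theta_s\omega,\Psi(s,\omega,x))\,ds$. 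By contrast, the paper's own construction (Steps 1--2 of the proof of Theorem \ref{stochastic Hartman-Grobman theorem1}) stays in continuous time: it writes $(K^*)^{-1}=I_{\mathcal{X}}+\kappa$, represents $\kappa$ as the bounded mild solution of an inhomogeneous linear equation via the Green kernel $G_A$ of the dichotomy, and runs the contraction $(\mathcal{T}\mathbb{E}\kappa)(\cdot)=\mathbb{E}\int_\R G_A(s)\gamma(\Phi(-s,\omega)\cdot+\kappa(\Phi(-s,\omega)\cdot))\,ds$ on $BUC(\mathcal{X})$ under the explicit smallness condition (\ref{construction condition}). Your route avoids the Green-kernel formalism and makes the role of the Oseledets data transparent; the paper's route avoids the time-one reduction and the averaging step and yields the quantitative contraction constant $2M_\epsilon(\omega)c_1(\omega)/(-(\lambda_1+\epsilon))$ directly. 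Both are contraction arguments in spaces of bounded random maps, and both localize in the same way, via the exit time $\tau_+(\omega,x)$ from the tempered ball where the nonlinearity is unmodified.

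Two points you should not leave implicit if this is written out in full. First, your operator $\mathcal{T}$ and the orbit $\{\theta_n\omega\}_{n\in\Z}$ require the two-sided Wiener shift, whereas the paper only defines $\theta_t$ for $t,s\in[0,T]$; you need the canonical two-sided extension of the driving Brownian motion. Second, $\mathcal{T}$ evaluates $g$ and $h$ at $\varphi(\theta_{-1}\omega,\cdot)^{-1}(x)$ for \emph{all} $x\in\R^n$, so the globalized time-one map must be a bijection of $\R^n$; this forces the tempered Lipschitz bound on the cut-off $g(\omega,\cdot)$ to be dominated by the tempered random variable $\|A(\omega)^{-1}\|^{-1}$, which is exactly the orbit-wise uniformity issue you flag as the main obstacle and is where the adapted norm and the choice of weight $w(\omega)$ must be pinned down precisely.
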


The stochastic Hartman-Grobman theorem (Lemma \ref{stochastic Hartman-Grobman theorem}) gives sufficient conditions for the existence of homeomorphism $H(\omega,\cdot)$,
which is attributed to \cite{[IL+02],[LZZ+20]}.
The stochastic maximum principle (Theorem \ref{maxsde}) provides the necessary conditions for the existence of homeomorphism $H(\omega,\cdot)$, and Proposition \ref{HJB-K} gives equation (\ref{maxsde K^*d}) to solve it.


Actually, let $K^*=H(\omega,\cdot)$, according to Definition \ref{conjugation},
$$\begin{aligned}
\mathbb{E}||K^*X_t-Y_t||^2&=\mathbb{E}||H(\theta_t\omega,\cdot)\Psi_t(\omega)x_0-\Phi_t(\omega)y_0||^2\\
&=\mathbb{E}||H(\theta_t\omega,\cdot)\Psi_t(\omega)x_0-\Phi_t(\omega)H(\omega,\cdot)x_0||^2\\
&=0.
\end{aligned}$$
Thus, the homeomorphism $H(\omega,\cdot)$ in the stochastic Hartman-Grobman theorem (Lemma \ref{stochastic Hartman-Grobman theorem}) is $K^*$ in this paper, and $X(t,x_0 )$ and $Y(t,K^*(x_0))$ are conjugate (i.e., completely similar).


\begin{remark}
Since the random time interval is space-dependent, the topological equivalence in Lemma \ref{stochastic Hartman-Grobman theorem} holds locally in space. In other words, near the fixed point, two SDEs of (\ref{linear and nonlinear}) are conjugate (completely similar).
\end{remark}

Finally, we provide a proof of the sufficient existence of $K^*$, which is mainly inspired by \cite{[HP+16]}, and use the HJB equation in Proposition \ref{HJB-K} to provide the necessary conditions that $K^*$ satisfies.

As Definition 2.2 in \cite{[HP+16]}, for a fixed $\omega\in\Omega$, let $P_+\in\mathcal{B}(\mathcal{X})$ be the dichotomic projection with $P_-=I_{\mathcal{X}}-P_+$. Define the Green kernel corresponding to the dichotomy as:
$$
G_A(t)=\left\{\begin{aligned}
\Phi(t,\omega)P_+,t\geq0,\\
-\Phi(t,\omega)P_-,t<0.
\end{aligned}\right.
$$
It is obvious that $G_A:\R\rightarrow\mathcal{B}(\mathcal{X})$ is strongly continuous in $t\neq0$, and $G_A$ is strongly $L_1$, with estimate $||G_A(t)||_{\mathcal{B}(\mathcal{X})}\leq M_\epsilon(\omega)e^{(\lambda_1+\epsilon)|t|}$ (Lemma \ref{LL+05,2}), where $M_\epsilon(\omega):\Omega\rightarrow[1,\infty)$ is tempered.

For all $t\in[0,\tau_+(\omega)]$, we express $f_l,l=0,\cdots,d$ by Taylor expansion:
$$f_l(X_t)=f_l(\mathbf{0})+A_l(X_t-\mathbf{0})+o(X_t-\mathbf{0})=A_lX_t+o(X_t).$$
Then, the first equation of (\ref{linear and nonlinear}) is changed into
$$
\left\{  \begin{aligned}
		\frac{dX_t}{dt}&=A_0X_t+\underset{l=1}{\overset{d}{\sum}}A_lX_t\frac{dW_t^l}{dt}+
o(X_t)+\underset{l=1}{\overset{d}{\sum}}o(X_t)\frac{dW_t^l}{dt},\\
&\triangleq A_0X_t+\underset{l=1}{\overset{d}{\sum}}A_lX_t\frac{dW_t^l}{dt}+
\gamma(X(t,x_0)),\\
		          X_0&=x_0,\\ \end{aligned}  \right.
$$
where
$$\begin{aligned}
\gamma(X(t,x_0))&=o(X_t)+\underset{l=1}{\overset{d}{\sum}}o(X_t)\frac{dW_t^l}{dt}\\
&=\frac{1}{2}\langle\frac{\partial^2f_0(\xi)}{\partial X^2}X_t,X_t\rangle
+\underset{l=1}{\overset{d}{\sum}}\frac{1}{2}\langle\frac{\partial^2f_l(\xi)}{\partial X^2}X_t\frac{dW_t^l}{dt},X_t\rangle,\xi\in B_{\delta(\omega)}.
\end{aligned}$$
Then
$$\mathbb{E}\underset{t\in[0,\tau_+(\omega)]}{\sup}||\gamma(\cdot)||\leq\underset{t\in[0,\tau_+(\omega)]}{\sup}||X_t||\leq\delta(\omega),$$
and for all $x_1,x_2\in B_{\delta(\omega)}$,
$$\begin{aligned}
&\mathbb{E}||\gamma(x_1)-\gamma(x_2)||\\
=&\mathbb{E}||\frac{1}{2}\langle\frac{\partial^2f_0(\xi_1)}{\partial X^2}x_1,x_1\rangle
-\frac{1}{2}\langle\frac{\partial^2f_0(\xi_2)}{\partial X^2}x_2,x_2\rangle\\
&+\underset{l=1}{\overset{d}{\sum}}\frac{1}{2}\langle\frac{\partial^2f_l(\xi_1)}{\partial X^2}x_1\frac{dW_t^l}{dt},x_1\rangle
-\underset{l=1}{\overset{d}{\sum}}\frac{1}{2}\langle\frac{\partial^2f_l(\xi_2)}{\partial X^2}x_2\frac{dW_t^l}{dt},x_2\rangle||\\
\leq&\mathbb{E}\frac{1}{2}\mathfrak{C}\big| ||x_1||^2-||x_2||^2\big|\\
\leq&\mathbb{E}\frac{1}{2}\mathfrak{C}(||x_1||+||x_2||)\big| ||x_1||-||x_2||\big|\\
\leq&\mathfrak{C}\delta(\omega)||x_1-x_2||\\
\triangleq&c_1(\omega)||x_1-x_2||,
\end{aligned}$$
where $\mathfrak{C}=\underset{l=0,\cdots,d}{\max}~\underset{t\in[0,\tau_+(\omega)]}{\sup}||\frac{\partial^2f_l(X_t)}{\partial X^2}||$ and $\delta(\omega)=\frac{-(\lambda_1+\epsilon)}{4\mathfrak{C}M_\epsilon(\omega)}$. Hence,
\begin{equation}\label{construction condition}
\frac{2M_\epsilon(\omega)c_1(\omega)}{-(\lambda_1+\epsilon)}=\frac{1}{2}<1,~\forall\epsilon>0.
\end{equation}

\begin{theorem}\label{stochastic Hartman-Grobman theorem1}
Suppose that the conditions of Lemma \ref{stochastic Hartman-Grobman theorem} and (\ref{construction condition}) hold. Then there is a homeomorphism $K^*$ such that
$$\mathbb{E}||K^*(X(t,x_0))-Y(t,K^*(x_0)||=0,~t\in[0,\tau_+(\omega)],$$
and
$$\frac{\partial K^*}{\partial X^T}=\bigg[\underset{l=1}{\overset{d}{\sum}}A_lYf_l^T(X)-f_0(X)(K^*X-Y)^T\bigg]
\bigg[\underset{l=1}{\overset{d}{\sum}}f_l(X)f_l^T(X)\bigg]^{-1}.$$
\end{theorem}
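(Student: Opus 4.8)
The proof splits into two essentially independent parts: (a) a Lyapunov--Perron fixed-point construction producing the conjugating homeomorphism $K^*$ (this is the quantitative version of Lemma~\ref{stochastic Hartman-Grobman theorem}, following \cite{[HP+16]}), and (b) a direct application of Proposition~\ref{HJB-K} to read off the formula for $\partial K^*/\partial X^T$. I would present them in that order, since (b) uses the conjugacy relation $\mathbb{E}\|K^*X_t-Y_t\|^2=0$ established in (a).

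For part (a), fix $\omega\in\Omega$ and work on the random time interval $[0,\tau_+(\omega)]$. Writing the nonlinear equation as the perturbation $\dot X_t = A_0X_t+\sum_{l=1}^d A_lX_t\dot W_t^l+\gamma(X(t,x_0))$ of the linearized one, with $\gamma$ the quadratic remainder estimated above ($\mathbb{E}\sup\|\gamma\|\le\delta(\omega)$ and Lipschitz constant $c_1(\omega)=\mathfrak{C}\delta(\omega)$), I would take as search space the fibrewise bounded continuous maps $h:\Omega\times\mathbb{R}^n\to\mathbb{R}^n$ with $h(\omega,\mathbf{0})=\mathbf{0}$ and $\sup_x\|h(\omega,x)\|$ tempered, normed by the fibre sup-norm, and set $K^*=\mathrm{Id}+h$. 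The conjugacy equation $H(\theta_t\omega,\cdot)\circ\Psi_t(\omega)=\Phi_t(\omega)\circ H(\omega,\cdot)$ is recast, via variation of constants against the Green kernel $G_A$ of the exponential dichotomy, as a fixed-point equation $h=\mathcal{T}h$ with $\mathcal{T}h$ a Lyapunov--Perron type integral built from $G_A$ and $\gamma$. Using the dichotomy bound $\|G_A(s)\|_{\B(\mathcal{X})}\le M_\epsilon(\omega)e^{(\lambda_1+\epsilon)|s|}$ with $\lambda_1+\epsilon<0$, the integral converges and $\mathcal{T}$ maps the space into itself; combining the Lipschitz bound on $\gamma$ with the pre-computed identity (\ref{construction condition}) gives
\[
\mathrm{Lip}(\mathcal{T})\ \le\ \frac{2M_\epsilon(\omega)c_1(\omega)}{-(\lambda_1+\epsilon)}\ =\ \tfrac12\ <\ 1 ,
\]
so $\mathcal{T}$ is a contraction with unique fixed point $h^*$. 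Running the same scheme with the roles of $\Psi$ and $\Phi$ interchanged (equivalently, for the inverse cocycle) yields $\widetilde h^*$ with $(\mathrm{Id}+h^*)\circ(\mathrm{Id}+\widetilde h^*)=\mathrm{Id}$ on $B_{\delta(\omega)}$, so $K^*=\mathrm{Id}+h^*$ is a homeomorphism fixing $\mathbf{0}$; temperedness of $M_\epsilon,c_1,\delta$ keeps all the bounds tempered, and $\theta$-measurability is inherited from the cocycle. Finally, exactly as in the computation displayed after Lemma~\ref{stochastic Hartman-Grobman theorem}, the conjugacy relation gives $\mathbb{E}\|K^*X_t-Y_t\|^2=\mathbb{E}\|H(\theta_t\omega,\cdot)\Psi_t(\omega)x_0-\Phi_t(\omega)H(\omega,\cdot)x_0\|^2=0$ for $t\in[0,\tau_+(\omega)]$.

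For part (b), I would apply Proposition~\ref{HJB-K} with the identifications $f=f_0$, $g(Y)=A_0Y$, $\sigma(X)=(f_1(X),\dots,f_d(X))$ and $\varsigma(Y)=(A_1Y,\dots,A_dY)$, so that $\sigma(X)\sigma^T(X)=\sum_{l=1}^d f_l(X)f_l^T(X)$ (invertible by hypothesis) and $\varsigma(Y)\sigma^T(X)=\sum_{l=1}^d A_lYf_l^T(X)$. Since part (a) already gives $\mathbb{E}\|K^*X_t-Y_t\|^2=0$, the hypothesis of Proposition~\ref{HJB-K} is satisfied, and its conclusion (\ref{maxsde K^*d}) becomes, after substitution,
\[
\frac{\partial K^*}{\partial X^T}\ =\ \Bigl[\sum_{l=1}^d A_lYf_l^T(X)-f_0(X)(K^*X-Y)^T\Bigr]\Bigl[\sum_{l=1}^d f_l(X)f_l^T(X)\Bigr]^{-1},
\]
which is the asserted identity.

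I expect the main obstacle to be part (a): making the Lyapunov--Perron scheme rigorous in the random, merely-tempered setting — in particular, convergence of the Lyapunov--Perron integral against $G_A$ uniformly over the tempered ball, verifying that the fixed point and all comparison constants remain tempered random variables with the right measurability, and proving that $K^*$ is a genuine homeomorphism (which forces running the scheme also for the inverse) on the space-dependent random interval $[0,\tau_+(\omega,x)]$ rather than a deterministic one. Part (b) is essentially bookkeeping once Proposition~\ref{HJB-K} is available.
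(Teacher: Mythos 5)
Your proposal is correct and follows essentially the same route as the paper: a contraction-mapping (Lyapunov--Perron) construction of the conjugacy via the Green kernel $G_A$, the Lipschitz estimate on the quadratic remainder $\gamma$, and the smallness condition (\ref{construction condition}), followed by the HJB/maximum-principle computation yielding the formula for $\partial K^*/\partial X^T$. The only differences are presentational --- the paper runs the fixed-point argument for $(K^*)^{-1}=I_{\mathcal{X}}+\kappa$ rather than for $K^*$ itself (your symmetric construction of the inverse is in fact slightly more careful about homeomorphy), and it re-derives the HJB identity for this specific system instead of citing Proposition \ref{HJB-K} with the substitutions $g(Y)=A_0Y$, $\varsigma(Y)=(A_1Y,\dots,A_dY)$ as you do.
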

\begin{proof}
The proof is divided into three steps. \\

\textbf{Step 1. Construct the map $K^*$.}

Suppose that $K^*=I_{\mathcal{X}}-\hat{\kappa}$ satisfies $\mathbb{E}K^*(X(t,x_0))=\mathbb{E}Y(t,K^*(x_0))$, and is invertible with $(K^*)^{-1}=I_{\mathcal{X}}+\kappa$. Notice that $(K^*)^{-1}(y)=y+\kappa(y),~\forall y\in B_{\delta(\omega)}$.

Taking the derivative on both sides of the equation $\mathbb{E}X(t,x_0)=\mathbb{E}(K^*)^{-1}(Y(t,K^*(x_0)))$, we have
$$\mathbb{E}\frac{dX(t,(K^*)^{-1}y_0)}{dt}=\mathbb{E}\frac{(K^*)^{-1}(Y(t,K^*(x_0)))}{dY}\frac{dY(t,K^*(x_0))}{dt},$$
i.e.,
$$\begin{aligned}
&\mathbb{E}\big[A_0(K^*)^{-1}(Y(t,K^*(x_0)))+\underset{l=1}{\overset{d}{\sum}}A_l(K^*)^{-1}(Y(t,K^*(x_0)))\frac{dW_t^l}{dt}+\gamma((K^*)^{-1}(Y(t,K^*(x_0))))\big]\\
=&\mathbb{E}\big[A_0(Y_t+\kappa(Y_t))+\underset{l=1}{\overset{d}{\sum}}A_l(Y_t+\kappa(Y_t))\frac{dW_t^l}{dt}+\gamma((Y_t+\kappa(Y_t)))\big]\\
=&\mathbb{E}\big[(I+\kappa'(Y))(A_0Y_t+\underset{l=1}{\overset{d}{\sum}}A_lY_t\frac{dW_t^l}{dt})\big].
\end{aligned}$$

Let $w(t)=\kappa(Y_t)$, then
$$\mathbb{E}\big[\frac{dw(t)}{dt}\big]=\mathbb{E}\big[A_0w(t)+\underset{l=1}{\overset{d}{\sum}}A_lw(t)\frac{dW_t^l}{dt}\gamma((Y_t+\kappa(Y_t)))\big].$$
Due to the definition of $w$, it is obvious that $w$ is bounded if $\kappa$ is bounded. Furthermore, the unique mild bounded solution of the the above equation is given by
$$\mathbb{E}w(t)=\mathbb{E}\big[\int_\R G_A(s)\gamma(\Phi(t-s,\omega)y_0+\kappa(\Phi(t-s,\omega)y_0))ds\big].$$
Setting $t=0$ yields the following functional equation for the map $\kappa$:
$$\mathbb{E}\big[\kappa(y_0)\big]=\mathbb{E}\big[\int_\R G_A(s)\gamma(\Phi(-s,\omega)y_0+\kappa(\Phi(-s,\omega)y_0))ds\big].$$

Define
$$\mathbb{E}\kappa(\cdot)\mapsto\mathbb{E}\int_\R G_A(s)\gamma(\Phi(-s,\omega)\cdot+\kappa(\Phi(-s,\omega)\cdot))ds:=
(\mathcal{T}\mathbb{E}\kappa)(\cdot),~\kappa\in BUC(\mathcal{X}),$$
where $BUC(\mathcal{X})$ is the bounded uniformly continuous function space on $\mathcal{X}$. \\

\textbf{Step 2. Existence and Uniqueness of $\kappa$.}

Suppose that the condition (\ref{construction condition}) holds. Then $\mathcal{T}$ is a selfmap on $BUC(\mathcal{X})$, and there is a unique fixed point $\kappa\in BUC(\mathcal{X})$ of $\mathcal{T}$.

Firstly, for a bounded function $\kappa\in BUC(\mathcal{X})$ , $\mathcal{T}\mathbb{E}\kappa$ is bounded as well, because of
$$\begin{aligned}
||\mathcal{T}\mathbb{E}\kappa||=&||\mathbb{E}\int_\R G_A(s)\gamma(\Phi(-s,\omega)\cdot+\kappa(\Phi(-s,\omega)\cdot))ds||\\
\leq&\mathbb{E}\int_\R ||G_A(s)||~||\gamma(\Phi(-s,\omega)\cdot+\kappa(\Phi(-s,\omega)\cdot))||ds\\
\leq&\mathbb{E}\int_\R M_\epsilon(\omega)e^{(\lambda_1+\epsilon)|s|}\delta(\omega)ds\\
=&\frac{2M_\epsilon(\omega)\delta(\omega)}{-(\lambda_1+\epsilon)}.\\
\end{aligned}$$

Secondly, let $x_1,x_2\in B_{\delta(\omega)}$. For all $\epsilon>0$, there exists $N=[\frac{1}{(\lambda_1+\epsilon)}\log\frac{-\epsilon(\lambda_1+\epsilon)}{12M_\epsilon(\omega)\delta(\omega)}]+1\in\N$ and $\delta=\frac{-\epsilon(\lambda_1+\epsilon)}{6M_\epsilon(\omega)c_1(\omega)(1-e^{(\lambda_1+\epsilon)N})}$ such that $||x_1-x_2||\leq\delta$, $|s|\leq N$,
$$||\kappa(\Phi(-s,\omega)x_1)-\kappa(\Phi(-s,\omega)x_2)||<\frac{\epsilon}{6NM_\epsilon(\omega)c_1(\omega)}.$$
From condition (\ref{construction condition}), we obtain
$$\begin{aligned}
&||\mathcal{T}\mathbb{E}\kappa(x_1)-\mathcal{T}\mathbb{E}\kappa(x_2)||\\
\leq&\mathbb{E}\int_\R M_\epsilon(\omega)e^{(\lambda_1+\epsilon)|s|}
||\gamma(\Phi(-s,\omega)x_1+\kappa(\Phi(-s,\omega)x_1))-\gamma(\Phi(-s,\omega)x_2+\kappa(\Phi(-s,\omega)x_2))||ds\\
\leq&\mathbb{E}\big[2M_\epsilon(\omega)\delta(\omega)\int_{|s|>N}e^{(\lambda_1+\epsilon)|s|}ds\\
&+M_\epsilon(\omega)c_1(\omega)\int_{|s|\leq N}\big(|\Phi(-s,\omega)(x_1-x_2)|
+|\kappa(\Phi(-s,\omega)x_1)-\kappa(\Phi(-s,\omega)x_2)|\big)ds\big]\\
\leq&\mathbb{E}\big[\frac{4M_\epsilon(\omega)\delta(\omega)}{-(\lambda_1+\epsilon)}e^{(\lambda_1+\epsilon)N}
+\frac{2M_\epsilon(\omega)c_1(\omega)(1-e^{(\lambda_1+\epsilon)N})}{-(\lambda_1+\epsilon)}||x_1-x_2||\\
&+2NM_\epsilon(\omega)c_1(\omega)\underset{|s|\leq N}{\sup}||\kappa(\Phi(-s,\omega)x_1)-\kappa(\Phi(-s,\omega)x_2)||\big]\\
<&\frac{\epsilon}{3}+\frac{\epsilon}{3}+\frac{\epsilon}{3}=\epsilon.
\end{aligned}$$
This shows $\mathcal{T}\mathbb{E}\kappa\in BUC(\mathcal{X})$.

Thirdly, for arbitrary $\kappa_1,\kappa_2\in BUC(\mathcal{X})$ and $x\in\mathcal{X}$, the following estimate holds:
$$\begin{aligned}
&||\mathcal{T}\mathbb{E}\kappa_1(x)-\mathcal{T}\mathbb{E}\kappa_2(x)||\\
\leq&\mathbb{E}\int_\R M_\epsilon(\omega)e^{(\lambda_1+\epsilon)|s|}
||\gamma(\Phi(-s,\omega)x+\kappa_1(\Phi(-s,\omega)x))-\gamma(\Phi(-s,\omega)x+\kappa_2(\Phi(-s,\omega)x))||ds\\
\leq&\mathbb{E}M_\epsilon(\omega)c_1(\omega)\int_\R e^{(\lambda_1+\epsilon)|s|}
||\kappa_1(\Phi(-s,\omega)x)-\kappa_2(\Phi(-s,\omega)x)||ds\\
\leq&\mathbb{E}M_\epsilon(\omega)c_1(\omega)||\kappa_1(\cdot)-\kappa_2(\cdot)||_{BUC(\mathcal{X})}
\int_\R e^{(\lambda_1+\epsilon)|s|}ds\\
=&\frac{2M_\epsilon(\omega)c_1(\omega)}{-(\lambda_1+\epsilon)}||\kappa_1(\cdot)-\kappa_2(\cdot)||_{BUC(\mathcal{X})},
\end{aligned}$$
where $\frac{2M_\epsilon(\omega)c_1(\omega)}{-(\lambda_1+\epsilon)}<1$ by (\ref{construction condition}).
This means that $\mathcal{T}$ is a strict contraction on $BUC(\mathcal{X})$. Then the contraction mapping principle yields a unique solution $\kappa\in BUC(\mathcal{X})$. \\

\textbf{Step 3. Solve $K^*$ which satisfies the stochastic maximum principle (Theorem \ref{maxsde}).}

Let $\psi_t^K=KX_t-Y_t,~t\in[0,\tau_+(\omega)]$, and it satisfies
\begin{equation}\label{linear-nonlinear-Psi}
\left\{\begin{aligned}
d\psi_t^K=&(\frac{\partial K}{\partial X^T}f_0(X_t)-A_0Y_t)dt+\underset{l=1}{\overset{d}{\sum}}(\frac{\partial K}{\partial X^T}f_l(X_t)-A_lY_t)dW_t^l\\
\triangleq&F(\psi_t^K,t)dt+G(\psi_t^K,t)dW_t,\\
\psi_0^K=&\psi_0,
\end{aligned}\right.
\end{equation}
where $F(\psi_t^K,t)\triangleq\frac{\partial K}{\partial X^T}f_0(X_t)-A_0Y_t$ is an $n$-dimensional vector and
$$G(\psi_t^K,t)=\bigg(\frac{\partial K}{\partial X^T}f_1(X_t)-A_1Y_t,\cdots,\frac{\partial K}{\partial X^T}f_d(X_t)-A_dY_t\bigg)$$
is a matrix of $n\times d$.
The cost functional corresponding to (\ref{cost functional}) is
$$\begin{aligned}
J[K]&=\mathbb{E}\bigg[\int_0^{\tau_+(\omega)}\frac{1}{\tau_+(\omega)}||KX_t-Y_t||^2dt+||KX_{\tau_+(\omega)}-Y_{\tau_+(\omega)}||\bigg]\\
&\triangleq \mathbb{E}\bigg[\int_0^{\tau_+(\omega)} L(\psi_t^K)dt+h(\psi_{\tau_+(\omega)}^K)\bigg].
\end{aligned}$$

Define $V(\psi,t)=\underset{K}{\inf}~\mathbb{E}\big[\int_t^{\tau_+(\omega)} L(\psi_s^K)ds+h(\psi_{\tau_+(\omega)}^K)\big],~t\in[0,\tau_+(\omega)]$. By Proposition \ref{HJB-K}, $V(\psi,t)$ satisfies the following Hamilton-Jacobi-Bellman (HJB) equation:
\begin{equation}\label{HJB-d1}
\left\{\begin{aligned}
&V_t+\underset{K}{\inf}\bigg\{V_\psi F(\psi,t)+\frac{1}{2}Tr\bigg[G^T(\psi,t)V_{\psi\psi}G(\psi,t)\bigg]
+L(\psi)\bigg\}=0,\\
&V(\psi,\tau_+(\omega))=h(\psi_{\tau_+(\omega)}^K),
\end{aligned}\right.
\end{equation}
where $V_t:=\frac{\partial V}{\partial t}(\psi,t)=-L(\psi),~V_\psi:=\frac{\partial V}{\partial \psi}(\psi,t)=2\psi^T,~V_{\psi\psi}:=\frac{\partial^2 V}{\partial \psi^2}(\psi,t)=2I$.

Since the minimizer $K^*$ exists, according to the stochastic maximum principle (Theorem \ref{maxsde}), from (\ref{HJB-d1}) we can obtain
$$\begin{aligned}
&V_\psi F(\psi^{K^*},t)+\frac{1}{2}Tr\bigg[G^T(\psi^{K^*},t)V_{\psi\psi}G(\psi^{K^*},t)\bigg]\\
=&V_\psi\big(\frac{\partial K^*}{\partial X^T}f_0(X)-A_0Y\big)+
\frac{1}{2}\underset{l=1}{\overset{d}{\sum}}\big(\frac{\partial K^*}{\partial X^T}f_l(X)-A_lY\big)^TV_{\psi\psi}\big(\frac{\partial K^*}{\partial X^T}f_l(X)-A_lY\big)\\
=&0.
\end{aligned}$$
Substituting $V_\psi=2\psi^T$ and $V_{\psi\psi}=2I$ into the above equation, we get
\begin{equation}\label{K^* equation1}
2\langle \frac{\partial K^*}{\partial X^T}f_0(X)-A_0Y,K^*X-Y\rangle
+\underset{l=1}{\overset{d}{\sum}}||\frac{\partial K^*}{\partial X^T}f_l(X)-A_lY||^2=0.
\end{equation}
Using It\^{o}'s formula (Lemma \ref{Ito formula}) to $||K^*X_t-Y_t||^2,~t\in[0,\tau_+(\omega)]$, we have
$$\begin{aligned}
\mathbb{E}||K^*X_t-Y_t||^2=&\mathbb{E}\int_0^t2\langle \frac{\partial K^*}{\partial X^T}f_0(X_s)-A_0Y_s,K^*X_s-Y_s\rangle \\
&+\underset{l=1}{\overset{d}{\sum}}||\frac{\partial K^*}{\partial X^T}f_l(X_s)-A_lY_s||^2ds.
\end{aligned}$$
Setting $\psi=\psi_s^{K^*}$ and substituting (\ref{K^* equation1}) into the above equation,  we obtain
$$\mathbb{E}||K^*X_t-Y_t||^2=0,~t\in[0,\tau_+],$$
i.e., $X(t,x_0 )$ and $Y(t,K^*(x_0))$ are conjugate (Definition \ref{conjugation}) in $[0,\tau_+]$.

Since $\sigma(X_t)\sigma^T(X_t)$ is invertible, $\underset{l=1}{\overset{d}{\sum}}f_l(X)f_l^T(X)$ is invertible. Then we can solve from (\ref{HJB-d1}) that the minimizer $K^*$ satisfies the following equation:
$$f_0(X)(K^*X-Y)^T+\underset{l=1}{\overset{d}{\sum}}(\frac{\partial K^*}{\partial X^T}f_l(X)-A_lY)f_l^T(X)=\mathbf{0}_{n\times n},$$
i.e.,
\begin{equation}\label{maxsde K^*d1}
\frac{\partial K^*}{\partial X^T}=\bigg[\underset{l=1}{\overset{d}{\sum}}A_lYf_l^T(X)-f_0(X)(K^*X-Y)^T\bigg]
\bigg[\underset{l=1}{\overset{d}{\sum}}f_l(X)f_l^T(X)\bigg]^{-1}.
\end{equation}

\end{proof}

To the best of our knowledge, this article is the first to examine the relationship between two stochastic differential systems from the perspective of similarity, which can be seen as an extension of stochastic conjugate theory. We transform the problem into the existence problem to the minimizer $K^*$ of the cost functional. Under appropriate assumptions, we have proved the sufficient conditions (a strong law of large numbers) and necessary conditions (a stochastic maximum principle) for the existence of minimizer.

There still remain many open problems to study. For instance, it is difficult to explicitly express the minimizer $K^*$ for general SDEs, and searching for numerical methods to obtain the numerical solution of the minimizer $K^*$ is without doubt a valuable and meaningful work and we will consider this work in future research. Besides, in sufficient conditions, we know that the SLLN holds, and whether the corresponding central limit theorem (CLT) and large deviation principle (LDP) can be obtained. In the stochastic maximum principle, how to explicitly solve the adjoint equation is a widely concerned issue in the field of optimal control (e.g., \cite{[P+90],[WU+17]}).

\appendix
\section*{Appendix}
\begin{lemma}\label{ergodic dynamical system}
\textbf{(\cite{[D+15]})}
The Wiener shift $\theta_t:\Omega\rightarrow\Omega,$ $\theta_t W_s\triangleq W_{t+s}-W_t,~\forall~t,s\in[0,T]$ is a measurable, measure-preserving and ergodic dynamical system with invariant measure $\mathbb{P}_W$.
\end{lemma}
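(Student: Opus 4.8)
Below is how I would prove the final statement, Lemma~\ref{ergodic dynamical system} (measurability, measure‑preservation and ergodicity of the Wiener shift).

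The plan is to realize $\theta$ on the canonical Wiener space and verify the three properties in turn; everything except ergodicity is routine bookkeeping. I would take $\Omega=C_0(\mathbb{T},\R^d)$, the space of continuous $\R^d$-valued paths vanishing at $0$ with the topology of uniform convergence on compacts, $\mathcal{F}$ its Borel $\sigma$-algebra completed by the $\mathbb{P}_W$-null sets, $W_s(\omega)=\omega(s)$ the coordinate process, and $\mathbb{P}_W$ the Wiener measure, so that $W$ is a standard $d$-dimensional Brownian motion. For $\theta_t\omega:=\omega(t+\cdot)-\omega(t)$ one checks directly that $\theta_0=\mathrm{Id}_\Omega$, that $(\theta_t\theta_s\omega)(u)=\omega(t+s+u)-\omega(t+s)=(\theta_{t+s}\omega)(u)$ for all $u$, so $\theta_{t+s}=\theta_t\circ\theta_s$, and that $(t,\omega)\mapsto\theta_t\omega$ is jointly continuous, hence $\mathcal{B}(\mathbb{T})\otimes\mathcal{F}$-measurable; on the two-sided space $C_0(\R,\R^d)$ the same formulas give a genuine flow with $\theta_t^{-1}=\theta_{-t}$. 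This settles the ``measurable dynamical system'' part.

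Next I would check measure-preservation by identifying the law of the image process. Under $\mathbb{P}_W$ the path $s\mapsto(\theta_t\omega)(s)=W_{t+s}(\omega)-W_t(\omega)$ has continuous sample paths, vanishes at $s=0$, has independent increments, and $(\theta_t\omega)(s)-(\theta_t\omega)(r)=W_{t+s}-W_{t+r}$ is $\mathcal{N}(0,(s-r)I_d)$-distributed; since these properties characterise the Wiener measure, $\mathbb{P}_W\circ\theta_t^{-1}=\mathbb{P}_W$, i.e.\ $\mathbb{P}_W$ is $\theta$-invariant. (Equivalently, the finite-dimensional distributions of $\theta_t\omega$ coincide with those of $\omega$, and these determine the measure on $C_0$.)

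The main step is ergodicity, which I would derive from mixing. Set $\mathcal{G}_{[a,b]}:=\sigma\big(W_v-W_u\,:\,a\le u\le v\le b\big)$; by independence of Brownian increments these sub-$\sigma$-algebras are $\mathbb{P}_W$-independent over disjoint intervals, and $\bigcup_{a>0}\mathcal{G}_{[0,a]}$ generates $\mathcal{F}$ up to null sets. If $F,G$ are bounded and $\mathcal{G}_{[0,a]}$-measurable, then for $t>a$ the variable $G\circ\theta_t$ is $\mathcal{G}_{[t,t+a]}$-measurable, hence independent of $F$, and by the invariance just proved $\mathbb{E}\big[F\,(G\circ\theta_t)\big]=\mathbb{E}[F]\,\mathbb{E}[G\circ\theta_t]=\mathbb{E}[F]\,\mathbb{E}[G]$. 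Since the bounded $\mathcal{G}_{[0,a]}$-measurable functions (over all $a$) are dense in $L^2(\mathbb{P}_W)$, a routine approximation extends $\lim_{t\to\infty}\mathbb{E}\big[F\,(G\circ\theta_t)\big]=\mathbb{E}[F]\,\mathbb{E}[G]$ to all $F,G\in L^2(\mathbb{P}_W)$, so $\theta$ is strongly mixing, and a fortiori ergodic: for a $\theta$-invariant $A$ one takes $F=G=\mathbf{1}_A$ to get $\mathbb{P}_W(A)=\mathbb{P}_W(A)^2$, forcing $\mathbb{P}_W(A)\in\{0,1\}$. Alternatively, a $\theta$-invariant set lies in the tail $\sigma$-algebra $\bigcap_{t\ge0}\mathcal{G}_{[t,\infty)}\subseteq\bigcap_{m}\sigma\big(\mathcal{G}_{[n,n+1]}:n\ge m\big)$, which is $\mathbb{P}_W$-trivial by Kolmogorov's zero--one law applied to the independent family $\{\mathcal{G}_{[n,n+1]}\}_{n\in\N}$.

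I do not expect a genuine obstacle here: the only slightly delicate points are the standard measure-theoretic ones — that $\bigcup_a\mathcal{G}_{[0,a]}$ (resp.\ $\{\mathcal{G}_{[n,n+1]}\}_n$) generates $\mathcal{F}$, that bounded cylinder functions are $L^2$-dense, and that passing to the two-sided flow on $C_0(\R,\R^d)$ (needed for the temperedness statements invoked elsewhere, cf.\ Definition~\ref{tempered}) leaves the argument unchanged; no new idea is required. The result is classical and is recorded, e.g., in \cite{[A+98]} and in \cite{[D+15]}.
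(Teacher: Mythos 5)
Your proposal is correct and is in fact more complete than the proof the paper gives. The paper's appendix proof and yours agree on the routine bookkeeping (identity property $\theta_0=\mathrm{Id}$, the flow property $\theta_{t+s}=\theta_t\circ\theta_s$, and joint continuity hence measurability of $(t,\omega)\mapsto\theta_t\omega$). The two arguments diverge on the substantive points. For measure-preservation the paper writes $\mathbb{P}_W(\theta_tA)=\theta_t\mathbb{P}_W(\theta_tA)\triangleq\mathbb{P}_W(\theta_t^{-1}\theta_tA)=\mathbb{P}_W(A)$, which is circular: the first equality already assumes $\theta_t\mathbb{P}_W=\mathbb{P}_W$, which is the claim to be proved. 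Your argument supplies the missing content, namely that $s\mapsto W_{t+s}-W_t$ is again a standard Brownian motion (continuous paths, vanishing at $0$, independent Gaussian increments of the right variance), so its law on path space is again $\mathbb{P}_W$; this is the correct and standard way to establish invariance. For ergodicity the paper offers no argument at all — its final sentence simply asserts that $\mathbb{P}_W$ is invariant and ergodic — whereas you derive strong mixing from independence of increments of $\sigma$-algebras $\mathcal{G}_{[0,a]}$ and $\mathcal{G}_{[t,t+a]}$ for $t>a$, plus an $L^2$-density approximation, and note the alternative route through Kolmogorov's zero--one law for the tail $\sigma$-algebra of the independent family $\{\mathcal{G}_{[n,n+1]}\}_n$. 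Both of your routes are classical and correct; either one closes the gap the paper leaves open. The one point worth flagging is that the mixing/tail argument genuinely requires letting $t\to\infty$, so it lives on the two-sided (or at least unbounded) time axis rather than on the fixed interval $[0,T]$ in which the lemma is stated; you already note the passage to $C_0(\R,\R^d)$, and that is indeed the setting in which the ergodicity statement is meaningful.
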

\begin{proof} For all $s,t,r\in[0,T]$, $\omega\in\Omega$, $A\in\mathcal{F}$, we need to verify that the following conditions are satisfied:

(i) Identity property: $\theta_0=Id$.\\
$\theta_0 W_s\triangleq W_{s}-W_0=W_{s}$, then $\theta_0=Id$.

(ii) Flow property: $\theta_{t+s}=\theta_t\circ\theta_s$.
$$\begin{aligned}
\theta_{t+s}W_r&=W_{t+s+r}-W_{t+s}\\
&=W_{t+s+r}-W_t+W_t-W_{t+s}\\
&=\theta_tW_{s+r}-\theta_tW_{s}\\
&=\theta_t(\theta_s W_r).
\end{aligned}$$
Thus, $\theta_{t+s}=\theta_t\circ\theta_s$.

(iii) Measurability property: The mapping $(\omega,t)\rightarrow\theta_t\omega$ is measurable.\\
In fact, $\theta_t$ is a homeomorphism for each $t$ and $(\omega,t)\rightarrow\theta_t\omega$ is continuous, hence measurable.

(iv) Measure-preserving property: $\mathbb{P}_W(\theta_tA)=\mathbb{P}_W(A)$.\\
For every fixed $t\in[0,T]$, a new probability measure $\theta_t\mathbb{P}_W$ on $\Omega$ is defined by $\theta_t\mathbb{P}_W(A)=\mathbb{P}_W(\theta_t^{-1}A)$. The mapping $\theta_t$ is measure-preserving if $\theta_t\mathbb{P}_W=\mathbb{P}_W$. Because $\theta_t^{-1}=\theta_{-t}$,
$$\mathbb{P}_W(\theta_tA)=\theta_t\mathbb{P}_W(\theta_tA)\triangleq\mathbb{P}_W
(\theta_t^{-1}\theta_tA)=\mathbb{P}_W(A).$$
The Wiener measure $\mathbb{P}_W$ is invariant and ergodic under $\theta_t$.
\end{proof}

\begin{lemma}\label{Prokhorov's theorem}
\textbf{(Prokhorov's theorem)}
Let $(\Omega,\mathcal{F},\mathbb{P})$ be a probability space, $\Omega$ be a Polish space, and $\mathcal{M}$ be a collection of measures defined on $\mathcal{F}$. Then $\mathcal{M}$ is tight if and only if the closure of $\mathcal{M}$  is sequentially compact in the space $(\Omega,\mathcal{F},\mathbb{P})$ equipped with the topology of weak convergence.
\end{lemma}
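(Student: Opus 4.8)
The plan is to prove the two implications of Prokhorov's theorem (Lemma \ref{Prokhorov's theorem}) by genuinely different arguments: the forward direction (tightness $\Rightarrow$ relative sequential compactness in the weak topology) via a compactification and the reverse direction (relative sequential compactness $\Rightarrow$ tightness) via a contradiction argument using total boundedness. It is worth noting at the outset that on a Polish $\Omega$ the weak topology on $Pr(\Omega)$ is itself metrizable (e.g.\ by the L\'evy--Prokhorov metric), so ``the closure of $\mathcal{M}$ is sequentially compact'' is equivalent to the statement that every sequence in $\mathcal{M}$ admits a weakly convergent subsequence, and this is the form I would verify. Separability of $\Omega$ will drive the forward direction, while completeness of the metric is what the reverse direction exploits.

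For the forward direction, suppose $\mathcal{M}$ is tight and fix an arbitrary sequence $\{\mu_n\}\subset\mathcal{M}$. Since $\Omega$ is Polish it embeds homeomorphically into the Hilbert cube $[0,1]^{\mathbb{N}}$; let $\bar\Omega$ denote the closure of its image there, a compact metric space, and regard each $\mu_n$ as a Borel probability measure on $\bar\Omega$ concentrated on $\Omega$. Because $C(\bar\Omega)$ is separable, the Banach--Alaoglu theorem furnishes a sequentially weak-$*$ compact unit ball in its dual, so after passing to a subsequence I get $\mu_{n_j}\rightharpoonup\nu$ for some $\nu\in Pr(\bar\Omega)$. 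Tightness then keeps the mass inside $\Omega$: given $\epsilon>0$ choose a compact $K_\epsilon\subset\Omega$ with $\mu_n(K_\epsilon)>1-\epsilon$ for all $n$; since $K_\epsilon$ is closed in $\bar\Omega$, the Portmanteau characterization of weak convergence gives $\nu(K_\epsilon)\geq\limsup_j\mu_{n_j}(K_\epsilon)\geq 1-\epsilon$, and running $\epsilon\downarrow 0$ along such compacts yields $\nu(\Omega)=1$. Hence $\nu$ is a bona fide element of $Pr(\Omega)$, and a final approximation step (uniform continuity of a test function on $K_\epsilon$, extension to $C(\bar\Omega)$, and tightness control of the tails) upgrades $\mu_{n_j}\rightharpoonup\nu$ in $Pr(\bar\Omega)$ to weak convergence in $Pr(\Omega)$.

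For the reverse direction, assume the closure of $\mathcal{M}$ is sequentially compact and show tightness. The core claim is that for every $\delta>0$ and $\epsilon>0$ there are finitely many open $\delta$-balls whose union has $\mu$-measure exceeding $1-\epsilon$ uniformly in $\mu\in\mathcal{M}$. I would prove this by contradiction: a dense sequence (separability) gives a countable cover by $\delta$-balls $B_1,B_2,\dots$, and if no finite subunion worked uniformly I could pick $\mu_m$ with $\mu_m(\bigcup_{i\le m}B_i)\le 1-\epsilon$, extract $\mu_{m_j}\rightharpoonup\nu$, and apply the lower-semicontinuity (open-set) part of Portmanteau to each fixed finite union to force $\nu(\Omega)\le 1-\epsilon$, contradicting $\nu(\Omega)=1$. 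Applying the claim with $\delta=1/n$ and $\epsilon/2^n$ and intersecting the closures of the resulting finite unions produces a closed, totally bounded set $K$, which is compact by completeness, with $\mu(K)\ge 1-\epsilon$ for all $\mu\in\mathcal{M}$; this is exactly tightness.

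The main obstacle is the forward direction, and within it two points: constructing the compactification $\bar\Omega$ and, more delicately, checking that the weak-$*$ limit $\nu$ on $\bar\Omega$ both (i) assigns full mass to $\Omega$ and (ii) actually identifies the limit in the weak topology of $Pr(\Omega)$. Step (i) is where tightness is indispensable, since otherwise mass could leak onto $\bar\Omega\setminus\Omega$; step (ii) needs care because $C_b(\Omega)$ is strictly larger than the space of restrictions of $C(\bar\Omega)$, so convergence against all bounded continuous test functions on $\Omega$ must be recovered by the compact-plus-tail approximation rather than read off directly from weak-$*$ convergence on $\bar\Omega$.
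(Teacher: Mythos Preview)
Your proposal outlines a correct and standard proof of Prokhorov's theorem: the forward direction via compactification in the Hilbert cube together with Banach--Alaoglu and a Portmanteau argument to prevent mass escaping to $\bar\Omega\setminus\Omega$, and the reverse direction via a contradiction argument built on total boundedness and completeness. Both halves are sound, and the obstacles you flag (showing $\nu(\Omega)=1$ and upgrading weak-$*$ convergence on $\bar\Omega$ to weak convergence on $\Omega$ against all of $C_b(\Omega)$) are exactly the points that require care.

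However, there is nothing to compare against: the paper does \emph{not} supply a proof of this lemma. Prokhorov's theorem is simply stated in the Appendix as a classical background result and then invoked (in Remark~\ref{Polish} and in the proof of Theorem~\ref{SLLN-Ergodicity}) without argument. So your proof is not an alternative route to the paper's proof; it is a full proof where the paper gives none.
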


\begin{lemma}\label{Lyapunov function theorem}
\textbf{(\cite{[M+99]})}
Let $p,\lambda,c_1,c_2$ be positive numbers. Assume that there exists a function $V(x,t)\in C^{2,1}(\R_0^n \times\R^+;\R^+)$ such that $$c_1|x|^p\leq V(x,t)\leq c_2|x|^p$$ and
$$LV(x,t)\leq-\lambda|x|^p$$
for all $(x,t)\in\R_0^n\times\R^+$. Then
$$\underset{t\rightarrow\infty}{\lim\sup}\frac{1}{t}\log(\mathbb{E}|x(t,x_0)|^p)\leq-\frac{\lambda}{c_2}$$
for all $x_0\in\R^n$.
\end{lemma}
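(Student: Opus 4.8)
The plan is to run the classical exponential-weight Lyapunov argument (due to Mao): multiply the Lyapunov function by a suitably chosen exponential, apply It\^o's formula, and exploit the two hypotheses so that the weighted process becomes a supermartingale in expectation. Throughout, $L$ denotes the diffusion generator $LV=V_t+V_x f+\tfrac12\,\mathrm{Tr}[g^TV_{xx}g]$ of the underlying equation $dx=f(x,t)\,dt+g(x,t)\,dW_t$, exactly as in the proof of Theorem \ref{existence-Dissipation}.

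First I would set $\gamma=\lambda/c_2$ and apply It\^o's formula to the process $e^{\gamma t}V(x(t),t)$, obtaining
$$
d\big[e^{\gamma t}V(x(t),t)\big]=e^{\gamma t}\big[\gamma V(x(t),t)+LV(x(t),t)\big]\,dt+e^{\gamma t}V_x(x(t),t)g(x(t),t)\,dW_t.
$$
Here the two hypotheses combine in the drift: from $V\le c_2|x|^p$ and $LV\le-\lambda|x|^p$ we get the pointwise bound $\gamma V+LV\le(\gamma c_2-\lambda)|x|^p=0$ by the choice $\gamma=\lambda/c_2$. Thus the drift coefficient is nonpositive, which is the crux of the estimate.

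Next I would integrate on $[0,t]$ and take expectations. The step I expect to require the most care is justifying that the It\^o (martingale) term contributes zero expectation, since $V_xg$ need not be square-integrable a priori. To handle this I would introduce the stopping times $\tau_k=\inf\{t\ge0:|x(t)|\ge k\}$, take expectations on $[0,t\wedge\tau_k]$ where the stochastic integral is a genuine martingale, use nonpositivity of the drift to obtain
$$
\mathbb{E}\big[e^{\gamma(t\wedge\tau_k)}V(x(t\wedge\tau_k),t\wedge\tau_k)\big]\le V(x_0,0),
$$
and then let $k\to\infty$ via Fatou's lemma, together with $\tau_k\to\infty$ (nonexplosion of the solution). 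This yields $\mathbb{E}\big[e^{\gamma t}V(x(t),t)\big]\le V(x_0,0)$.

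Finally I would sandwich using the two-sided bound on $V$. Since $c_1|x|^p\le V(x,t)$ and $V(x_0,0)\le c_2|x_0|^p$, the inequality above gives $c_1e^{\gamma t}\mathbb{E}|x(t)|^p\le c_2|x_0|^p$, hence $\mathbb{E}|x(t)|^p\le(c_2/c_1)|x_0|^p e^{-\gamma t}$. Taking logarithms, dividing by $t$, and letting $t\to\infty$ makes the constant term $\tfrac1t\log\big((c_2/c_1)|x_0|^p\big)$ vanish and leaves
$$
\underset{t\rightarrow\infty}{\lim\sup}\,\frac{1}{t}\log\big(\mathbb{E}|x(t,x_0)|^p\big)\le-\gamma=-\frac{\lambda}{c_2},
$$
as claimed. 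The only genuine obstacle is the localization and integrability bookkeeping in the expectation step; the choice of the exponential weight and the final sandwich are routine.
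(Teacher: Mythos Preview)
Your argument is correct and is exactly the classical exponential-weight Lyapunov estimate of Mao \cite{[M+99]} that the paper cites; the paper itself gives no proof of this lemma (it is stated in the appendix as a quoted result), so there is nothing to compare beyond noting that your proposal reproduces the cited source. One small refinement: since $V$ is only assumed $C^{2,1}$ on $\R_0^n\times\R^+$, your localization should in principle also cut off near the origin (e.g.\ $\tau_k=\inf\{t\ge0:|x(t)|\notin(1/k,k)\}$), using that the trivial solution is a fixed point so $x_0\neq0$ implies $x(t)\neq0$; the case $x_0=0$ is vacuous.
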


\begin{lemma}\label{Ito formula}
\textbf{(It\^{o} formula)}
For all $0\leq t\leq T$, the following It\^{o} formula holds:
$$\begin{aligned}
||X(t)||^2_{\mathcal{X}}=||X(0)||^2_{\mathcal{X}}&+\int_0^t(2\langle f(s,X_s),X_s\rangle_{\mathcal{X}}+||\sigma(s,X_s)||^2_{\mathcal{L}^2})ds\\
&+2\int_0^t\langle X_s,\sigma(s,X_s)dW_s\rangle_{\mathcal{X}},\\
||Y(t)||^2_{\mathcal{X}}=||Y(0)||^2_{\mathcal{X}}&+\int_0^t(2\langle g(s,Y_s),Y_s\rangle_{\mathcal{X}}+||\varsigma(s,Y_s)||^2_{\mathcal{L}^2})ds\\
&+2\int_0^t\langle Y_s,\varsigma(s,Y_s)dW_s\rangle_{\mathcal{X}}.
\end{aligned}
$$
\end{lemma}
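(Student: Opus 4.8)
The plan is to obtain both identities as direct applications of the classical It\^o formula to the smooth quadratic functional $\phi(x)=\|x\|^2=\langle x,x\rangle$ evaluated along the respective solution processes. The two displayed equalities are proved in exactly the same manner, with the triple $(f,\sigma,X)$ replaced by $(g,\varsigma,Y)$, so I would establish the first in detail and note that the second is obtained identically.

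First I would record the regularity and derivatives of the test function. The map $\phi:\R^n\to\R$, $\phi(x)=\langle x,x\rangle$, is smooth, with gradient $\nabla\phi(x)=2x$ and Hessian $D^2\phi(x)=2I_n$. Since $X$ solves $dX_t=f(t,X_t)\,dt+\sigma(t,X_t)\,dW_t$ with $W$ a $d$-dimensional Brownian motion and $\sigma(t,\cdot)$ taking values in $\R^{n\times d}$, It\^o's formula gives
$$
d\phi(X_t)=\langle\nabla\phi(X_t),f(t,X_t)\rangle\,dt+\langle\nabla\phi(X_t),\sigma(t,X_t)\,dW_t\rangle+\tfrac12\,\mathrm{Tr}\big[\sigma^{\mathsf T}(t,X_t)\,D^2\phi(X_t)\,\sigma(t,X_t)\big]\,dt.
$$
Substituting $\nabla\phi(X_t)=2X_t$ and $D^2\phi(X_t)=2I_n$, the second-order correction reduces to $\tfrac12\,\mathrm{Tr}[\sigma^{\mathsf T}\cdot 2I_n\cdot\sigma]=\mathrm{Tr}[\sigma^{\mathsf T}\sigma]$, which is exactly the squared Hilbert--Schmidt (Frobenius) norm $\|\sigma(t,X_t)\|^2_{\mathcal{L}^2}$. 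Hence
$$
d\|X_t\|^2_{\mathcal{X}}=\big(2\langle f(t,X_t),X_t\rangle_{\mathcal{X}}+\|\sigma(t,X_t)\|^2_{\mathcal{L}^2}\big)dt+2\langle X_t,\sigma(t,X_t)\,dW_t\rangle_{\mathcal{X}}.
$$
Integrating over $[0,t]$ and using $\|X(0)\|^2_{\mathcal{X}}=\|X_0\|^2_{\mathcal{X}}$ produces the first displayed identity; repeating the computation with $(g,\varsigma,Y)$ in place of $(f,\sigma,X)$ gives the second.

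The only matters requiring care are the finiteness and measurability of the terms, so that the term-by-term integration is legitimate. I would invoke the coerciveness hypothesis (H3) together with $X\in\mathcal{X}=\mathcal{L}^2$ to verify that $\mathbb{E}\int_0^T\|\sigma(s,X_s)\|^2_{\mathcal{L}^2}\,ds<\infty$ and $\mathbb{E}\int_0^T|\langle f(s,X_s),X_s\rangle_{\mathcal{X}}|\,ds<\infty$, which guarantees that the drift integral is finite and that $\int_0^t\langle X_s,\sigma(s,X_s)\,dW_s\rangle_{\mathcal{X}}$ is a well-defined (local) martingale. I do not anticipate a genuine obstacle: the assertion is the standard energy identity for the solution, and the only point one must get exactly right is the identification of the It\^o trace-correction term with $\|\sigma\|^2_{\mathcal{L}^2}$, which the Hilbert--Schmidt computation above makes transparent.
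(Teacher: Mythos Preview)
Your derivation is correct and is exactly the standard route to this energy identity: apply the scalar It\^o formula to $\phi(x)=\|x\|^2$, read off $\nabla\phi=2x$, $D^2\phi=2I_n$, and identify the trace correction $\tfrac12\mathrm{Tr}(\sigma^{\mathsf T}\cdot 2I_n\cdot\sigma)$ with the Hilbert--Schmidt norm $\|\sigma\|^2_{\mathcal{L}^2}$. The integrability remarks you add (via (H3) and $X\in\mathcal X$) are appropriate to justify that the stochastic integral is a genuine local martingale and the drift integral is finite.

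For comparison with the paper: the paper does not give a proof of this lemma at all. It is placed in the Appendix as a standard tool and simply stated, alongside other classical facts (Gronwall, Young, Prokhorov) that are likewise quoted without argument. So there is nothing to compare at the level of strategy; your write-up supplies precisely the routine verification that the paper elects to omit.
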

\begin{lemma}\label{Gronwall lemma}
\textbf{(Gronwall lemma)}
Let $\alpha(t),\beta(t),u(t)$ be real-valued functions defined on $[0,T]$. Assume that $\beta(t),u(t)$ are continuous and that the negative part of $\alpha(t)$ is integrable on every closed and bounded subinterval of $[0,T]$. \\

(i) If $\beta(t)$ is non-negative and if $u(t)$ satisfies the integral inequality
$$u(t)\leq\alpha(t)+\int_0^t\beta(s)u(s)ds,~\forall~t\in[0,T],$$
then
$$u(t)\leq\alpha(t)+\int_0^t\alpha(s)\beta(s)\exp\bigg(\int_s^t\beta(r)dr\bigg)ds,~\forall~t\in[0,T].$$

(ii) If, in addition, the function $\alpha(t)$ is non-decreasing, then
$$u(t)\leq\alpha(t)\exp\bigg(\int_0^t\beta(s)ds\bigg),~\forall~t\in[0,T].$$
\end{lemma}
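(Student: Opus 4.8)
The plan is to follow the classical integrating-factor argument for Gr\"onwall-type inequalities, reducing the integral inequality to a first-order differential inequality for the accumulated remainder. First I would introduce the auxiliary function
$$v(t)=\int_0^t\beta(s)u(s)\,ds,$$
which is well defined and continuously differentiable on $[0,T]$ because $\beta$ and $u$ are continuous; indeed $v(0)=0$ and $v'(t)=\beta(t)u(t)$ by the fundamental theorem of calculus. The hypothesis $u(t)\le\alpha(t)+v(t)$ together with $\beta(t)\ge0$ then yields the pointwise differential inequality $v'(t)\le\beta(t)\alpha(t)+\beta(t)v(t)$.

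Next I would eliminate the $\beta v$ term by multiplying through by the integrating factor $\mu(t)=\exp\!\big(-\int_0^t\beta(r)\,dr\big)$, which is positive and absolutely continuous. This gives
$$\frac{d}{dt}\big[v(t)\mu(t)\big]=\big(v'(t)-\beta(t)v(t)\big)\mu(t)\le\beta(t)\alpha(t)\mu(t).$$
Integrating from $0$ to $t$ and using $v(0)\mu(0)=0$ produces
$$v(t)\mu(t)\le\int_0^t\beta(s)\alpha(s)\exp\!\Big(-\int_0^s\beta(r)\,dr\Big)\,ds.$$
Dividing by $\mu(t)$ (equivalently multiplying by $\exp(\int_0^t\beta(r)\,dr)$) and recombining the exponentials into $\exp(\int_s^t\beta(r)\,dr)$ gives the bound on $v(t)$; adding $\alpha(t)$ and invoking $u\le\alpha+v$ yields precisely the conclusion of part (i).

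For part (ii) I would specialize the estimate from (i) using monotonicity of $\alpha$: since $\alpha(s)\le\alpha(t)$ for all $s\le t$, the bound becomes
$$u(t)\le\alpha(t)+\alpha(t)\int_0^t\beta(s)\exp\!\Big(\int_s^t\beta(r)\,dr\Big)\,ds.$$
The remaining integral is explicit: observing that $\frac{d}{ds}\exp(\int_s^t\beta(r)\,dr)=-\beta(s)\exp(\int_s^t\beta(r)\,dr)$, it evaluates to $\exp(\int_0^t\beta(r)\,dr)-1$, and substituting collapses the right-hand side to $\alpha(t)\exp(\int_0^t\beta(r)\,dr)$, which is the claim.

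The argument is essentially routine, so the only point requiring care --- and hence the main obstacle --- is the regularity bookkeeping dictated by the hypotheses: $\alpha$ is not assumed continuous (only its negative part locally integrable), so I must not differentiate $\alpha$ anywhere. The device of placing all the integration inside $v$ is precisely what keeps the argument valid, since $v$ inherits $C^1$-regularity from the continuity of $\beta u$ alone, while the integrability of the negative part of $\alpha$ guarantees that $\int_0^t\beta\alpha\mu\,ds$ is meaningful. I would also note that $\beta\ge0$ is used twice --- once to turn the hypothesis into the differential inequality in the correct direction, and once implicitly to keep the exponential factors well behaved --- so dropping that sign assumption would break the chain.
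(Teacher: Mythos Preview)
Your argument is the standard integrating-factor proof of Gronwall's inequality and is correct as written; the regularity bookkeeping you flag (keeping all differentiation on $v$ and never on $\alpha$) is exactly the right way to handle the weak hypothesis on $\alpha$. Note, however, that the paper does not actually supply a proof of this lemma: it is listed in the Appendix as a classical auxiliary result and simply stated without argument, so there is no ``paper's own proof'' to compare against. Your write-up would serve perfectly well as the omitted justification.
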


\begin{lemma}\label{Young inequality}
\textbf{(Young inequality)}
Let $p>0,q>0$, and $\frac{1}{p}+\frac{1}{q}=1$. When $1<p<+\infty$,
$$ |ab|\leq\frac{|a|^p}{p}+\frac{|b|^q}{q}.$$
Particularly, if $p=q=2$, the above inequality is also known as Cauchy inequality.
Further, let $\epsilon>0$. Replace $a$ and $b$ with $\epsilon^{\frac{1}{p}}a$ and $\epsilon^{-\frac{1}{p}}b$,
$$ |ab|\leq\frac{\epsilon |a|^p}{p}+\frac{\epsilon^{-\frac{q}{p}}|b|^q}{q}.$$
Particularly, if $p=q=2$, the above inequality is changed into
$$|ab|\leq\frac{\epsilon}{2}|a|^2+\frac{1}{2\epsilon}|b|^2.$$

When $0<p<1$, the inequality sign $``\leq"$ is inversed into $``\geq"$, and only when $|b|=|a|^{p-1}$, the $``="$ holds.
\end{lemma}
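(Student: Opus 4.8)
The plan is to recognize this as the classical Young inequality and to obtain it from the convexity of the exponential, then read off the $\epsilon$-refinement, the Cauchy case, and the reverse ($0<p<1$) inequality as direct consequences. First I would reduce to nonnegative numbers: since every quantity appearing depends only on $|a|$ and $|b|$, I may assume $a,b\ge 0$, and the inequality is trivial if $a=0$ or $b=0$, so it suffices to fix $a,b>0$.

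For the principal range $1<p<+\infty$, so that $q=\frac{p}{p-1}>1$ and $\frac1p+\frac1q=1$ are convex weights, I would write $ab=e^{\frac1p\log a^p+\frac1q\log b^q}$ and apply the convexity of $t\mapsto e^{t}$ to these two weights:
\[
ab=\exp\!\Big(\tfrac1p\log a^p+\tfrac1q\log b^q\Big)\le \tfrac1p\,e^{\log a^p}+\tfrac1q\,e^{\log b^q}=\frac{a^p}{p}+\frac{b^q}{q},
\]
with equality exactly when $a^p=b^q$. The stated $\epsilon$-version is then purely formal: substituting $a\mapsto\epsilon^{1/p}a$ and $b\mapsto\epsilon^{-1/p}b$ restores $|ab|$ on the left and produces $\frac{\epsilon|a|^p}{p}+\frac{\epsilon^{-q/p}|b|^q}{q}$ on the right, and the Cauchy inequality follows by setting $p=q=2$, where $\epsilon^{-q/p}=\epsilon^{-1}$.

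The only genuinely distinct case is the reverse inequality for $0<p<1$; then $\frac1q=1-\frac1p<0$, so $q<0$ and the sense of the inequality flips. Here I would manufacture the reverse statement from the forward one rather than reprove it from scratch. The weights $(p,1-p)$ now form a legitimate convex combination, so applying the forward inequality (weighted AM--GM) to the pair $(ab,\,b^{q})$ gives $(ab)^{p}(b^{q})^{1-p}\le p\,(ab)+(1-p)\,b^{q}$. The geometric-mean side collapses, since the exponent of $b$ is $p+q(1-p)=0$ because $q(1-p)=-p$, so the left-hand side equals $a^{p}$. Rearranging $a^{p}\le p\,ab+(1-p)\,b^{q}$ and using $\frac{1-p}{p}=-\frac1q$ yields exactly $\frac{a^{p}}{p}+\frac{b^{q}}{q}\le ab$. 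Equality in the underlying AM--GM requires $ab=b^{q}$, i.e.\ $a=b^{q-1}$; since $q-1=\frac1{p-1}$, this is the same as $|b|=|a|^{p-1}$, as claimed.

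The hard part is bookkeeping rather than analysis: the one subtlety is the $0<p<1$ branch, where I must track the sign of $q$ and choose the auxiliary convex weights $(p,1-p)$ so that the forward inequality, applied to $(ab,b^{q})$, produces the reversed inequality after the exponent of $b$ cancels and the coefficient $\frac{1-p}{p}$ is rewritten as $-\frac1q$. Everything else is a single application of convexity of the exponential together with a change of variables.
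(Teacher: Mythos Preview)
Your argument is correct. The convexity-of-the-exponential proof for $1<p<\infty$ is standard and clean, the $\epsilon$-version and the Cauchy case are immediate substitutions, and your derivation of the reverse inequality for $0<p<1$ by applying the forward weighted AM--GM to the pair $(ab,b^{q})$ with weights $(p,1-p)$ is valid: the exponent of $b$ indeed cancels because $q(1-p)=-p$, and the equality condition $ab=b^{q}$ rewrites as $|b|=|a|^{p-1}$ exactly as stated.

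There is nothing to compare against: the paper simply records this lemma in the Appendix as a known fact and gives no proof of its own. Your write-up therefore supplies what the paper omits.
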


 \end{document}